\newcommand{\U}{\mathscr{U}}
\newcommand{\eh}{\mathrm{e-h}}
\newcommand{\HF}{H_{\mathscr{F}}}
\newcommand{\calG}{\mathcal G}
\newcommand\bbZ{\mathbb{Z}}
\newcommand\bbC{\mathbb{C}}
\DeclareMathOperator*{\res}{Res}
\definecolor{dg}{cmyk}{1,0,1,.2}
\definecolor{m}{rgb}{1,0.1,1}
\numberwithin{equation}{section}
\begin{document}

\title{Eta and rho invariants on manifolds with edges}

\author{Paolo Piazza}
\address{Sapienza Universit\`a di Roma \\ Dipartimento di Matematica\\ Roma, Italy.}
\email{piazza@mat.uniroma1.it}

\author{Boris Vertman}
\address{University of Oldenburg \\ Institute of Mathematics \\
Oldenburg, Germany.}
\email{boris.vertman@uni-oldenburg.de}

\thanks{2000 Mathematics Subject Classification. 58J20, 58J28.}

\date{This document was compiled on: \today}

\begin{abstract}
We establish existence of eta-invariants as well as of the Atiyah-Patodi-Singer and 
the Cheeger-Gromov rho-invariants for a class of Dirac operators on an incomplete edge space.  
Our analysis  applies in particular to the signature and the spin Dirac 
operator. We derive an analogue of the Atiyah-Patodi-Singer index theorem for incomplete
edge spaces and their non-compact infinite Galois coverings with edge singular boundary.
Our arguments employ microlocal analysis of the heat kernel asymptotics on incomplete
edge spaces and the classical argument of Atiyah-Patodi-Singer. As an application,  we discuss  
stability results for the two rho-invariants we have defined.
\end{abstract}

\maketitle

\tableofcontents

\section{Introduction and statement of the main result}\label{intro}

\subsection{Eta function and eta invariant}
Consider a compact Riemannian manifold $(M,g)$ with $\dim M = m$ 
and assume for the moment that $M$ is closed and smooth, i.e. without singularities. 
Consider a Hermitian vector bundle $(E,h)$ over $M$ and denote the space 
of smooth sections over $M$ with values in the vector bundle $E$ by $\Gamma(M,E)$.
Let $D$ be a linear elliptic self-adjoint
first order differential operator $D$ acting on the smooth sections $\Gamma(M,E)$. 
The eta invariant of $D$ was introduced by Atiyah, Patodi and Singer \cite{APSa}, 
see also \cite{APSb, APSc}, to measure the spectral asymmetry of $D$ in the following way. 
Consider an enumeration $\{\lambda_n\}_{n\in \N_0}$ of 
the non-zero eigenvalues of $D$, counted with their multiplicities and ordered e.g. in ascending order. 
Denote by $\textup{sign}(\lambda)$ the sign of an eigenvalue 
$\lambda$. Then the eta function of $D$ is defined by 
\begin{align*}
\eta(D,s) := \sum_{n=0}^\infty \textup{sign}(\lambda_n) \, | \lambda_n| ^{-s}, \quad \Re(s) > m.
\end{align*}
This series is absolutely convergent for $\Re(s) > m$ and, as a consequence 
of the short time asymptotics of the trace $\textup{Tr} \, De^{-tD^2}$, extends to a meromorphic 
function on the whole of $\C$ by the following integral expression
\begin{equation}\label{eta-heat}
\eta(D,s) = \frac{1}{\Gamma((s+1)/2)}
\int_0^\infty t^{(s-1)/2} \, \textup{Tr} \, De^{-tD^2} dt
\end{equation}
Regularity of the eta function at $s=0$ is a general phenomenon that 
has been established by Atiyah, Patodi and Singer \cite{APSa} 
by viewing $M$ as boundary of a half cylinder $M \times \R^+$ and studying 
the Dirac-type operator $\mathbb{D} := (\partial_u + D)$ on the cylinder with Atiyah-Patodi-Singer boundary
conditions at $u=0$. Here, $u\in \R^+$ denotes the variable along the cylinder.
If the eta-function is regular at $s=0$, its value at zero is defined as the eta-invariant of $D$.\medskip

Under additional assumptions on the operator $D$, namely $D$ is a Dirac operator associated to a unitary
Clifford action and to a Clifford connection, one can prove that the right hand side of \eqref{eta-heat}, evaluated
at $s=0$, namely
\begin{align*}
\frac{1}{\sqrt{\pi}}
\int_0^\infty \, \textup{Tr} \, De^{-tD^2} \frac{dt}{\sqrt{t}}\,,
\end{align*}
converges (see \cite{BiFr}) and this value is the eta invariant of $D$.

\medskip
We point out  that if $M$ is even dimensional and $D$ is an odd $\bbZ_2$-graded Dirac-type operator acting
on the sections of $E=E^+ \oplus E^-$
\begin{equation*} D = \left(
\begin{array}{cc}
0 & D^- \\ D^+ & 0
\end{array}\right),
\end{equation*}
then $De^{-tD^2}$ is an odd operator and thus its trace vanishes identically
for all $t>0$
\begin{align*}
\textup{Tr} \, De^{-tD^2}\equiv 0.
\end{align*}
Thus the integrand in the left hand side of \eqref{eta-heat} vanishes identically and consequently the eta invariant
of $D$ is equal to zero.
Needless to say, this argument breaks down if we perturb $D$ by a potential which is not odd, possibly leading to a non-trivial
eta-invariant in even dimensions. Another geometric example of a non-trivial eta invariant in even dimensions is given by the Dirac operator on pin and pin$_c$-manifolds; see for example 
Gilkey \cite{Gilkey-PIN} \cite{Gilkey-PIN-PIN}. 

\medskip

The eta function has been studied extensively in various
settings, for instance by Bismut and Freed \cite{BiFr}, Branson and Gilkey \cite{BG}, Botvinnik and Gilkey \cite{BoGi}, \cite{BoGi2}, 
Gilkey \cite{G}, Goette \cite{G1, G2, G3}. In case of manifolds with boundary, we mention
contributions by Bunke \cite{Bunke}, Gilkey and Smith \cite{GS}, Lesch and Wojciechowski \cite{LW}, 
M\"uller \cite{Mue} and Melrose \cite{Mel:TAP}. In the setting
of isolated conical singularities we refer the reader to the seminal work by 
Cheeger \cite{Che:SGS, Che-eta} and Lesch \cite{Les:OOF}. We also refer the reader to an 
in-depth overview article by Goette \cite{G4} and the references therein.

\subsection{Atiyah-Patodi-Singer index theorem}\label{APS-intro}
The eta invariant $\eta(D)$ appears as the correction 
term in the Atiyah-Patodi-Singer index theorem 
on manifolds with boundary. More precisely, consider 
an even dimensional compact Riemannian manifold $(X,G)$ with boundary
$(M,g)$ and a linear elliptic first order differential operator 
$\mathbb{D}$ acting between the sections of two Hermitian vector
bundles $E$ and $F$. Assume that over the collar 
$M\times [0,\varepsilon)$ of the boundary, 
$\mathbb{D}$ takes a special form 
\begin{align}\label{boundary-intro}
\mathbb{D} = \sigma \left( \frac{\partial}{\partial u} + D \right),
\end{align}
where $u\in [0,\varepsilon)$ is the inward normal 
coordinate and $\sigma$ is a bundle
isomorphism $E\restriction M \to F \restriction M$. 
The tangential operator $D$ is a self-adjoint 
operator acting on sections of $E\restriction M$. 
Impose Atiyah-Patodi-Singer
boundary conditions at $M$, defined in terms of 
the positive spectral projection of $D$. 
Then $\mathbb{D}$ is Fredholm  and for its index, 
$\textup{index} \, \mathbb{D} = \dim \ker \, \mathbb{D} 
- \dim \ker \, \mathbb{D}^*$,  the following remarkable formula holds:
\begin{align}\label{APS}
\textup{index} \, \mathbb{D} = \int_X a_0(p) \, \textup{dvol}_G(p) - \frac{\dim \ker D+ \eta(D)}{2}.
\end{align}
Here $a_0(p)$ is obtained as the constant term in the short time asymptotic 
expansion of the pointwise trace of $\exp (-t\mathbb{D}^*\mathbb{D}) - \exp (-t\mathbb{D}\mathbb{D}^*)$ at $p\in X$; it
is the same coefficient that would appear in the  boundaryless case. If $\mathbb{D}$ is the positive part of an odd
Dirac-type operator acting between the sections of a $\mathbb{Z}_2$-graded Clifford module, then $\alpha_0$ can be explicitly written 
down in terms of the curvature tensor of $M$ and of $E$. For example, in case  $\mathbb{D}$ is
the spin Dirac operator acting between the sections of the positive and the negative spinor bundles, then  $a_0$ can be  identified
as the $\widehat{A}$-polynomial applied to the curvature  tensor of $X$.

\subsection{Atiyah-Patodi-Singer index theorem for Galois coverings}
The Atiyah-Patodi-Singer index theorem has a non-compact analogue on infinite Galois coverings, 
a result due to Ramachandran \cite{Ram};
this result  is a generalisation to manifolds with boundary of the seminal result by Atiyah \cite{Atiyah-VN}.
Given a Galois covering $\widetilde{X}$
of the manifold $X$ with Galois group $\Gamma$, one may lift the Dirac 
operator $\mathbb{D}$ on $X$ to a $\Gamma$-invariant operator $\widetilde{\mathbb{D}}$ 
on the the covering and show that there is a well-defined 
$\Gamma$-index $\textup{index}_\Gamma$ for a 
suitable generalization  of the Atiyah-Patodi-Singer boundary conditions. The boundary operator of 
$\widetilde{\mathbb{D}}$ 
is denoted by  $\widetilde{D}$; it is a $\Gamma$-invariant  operator on $\partial\widetilde{X}$ descending to $D$ on $\partial X$.
Ramachandran proves the existence of a  $\Gamma$-eta invariant $\eta_\Gamma (\widetilde{D})$
and establishes the index formula
\begin{align}\label{APS-Galois}
\textup{index}_\Gamma \, \widetilde{\mathbb{D}} = \int_X a_0(p) \, \textup{dvol}_G(p) - 
\frac{\dim_\Gamma \ker \widetilde{D} + \eta_\Gamma(\widetilde{D})}{2},
\end{align}
where $a_0$ is again the usual coefficient from above.  Both the $\Gamma$-index, the $\Gamma$-dimension
of $\ker \widetilde{D}$ and the $\Gamma$-eta invariant make use of a trace on a suitable Von Neumann algebra.

\subsection{Cheeger-Gromov and Atiyah-Patodi-Singer rho-invariants}
While the eta invariant has a prominent role as the correction term 
in the index formula on manifolds with boundary, it can in fact be used
to study geometric questions on closed manifolds independently. 
Particularly interesting are the Cheeger-Gromov and the Atiyah-Patodi-Singer
rho invariants, defined in terms of eta invariants in the following way. 
The Cheeger-Gromov rho invariant is defined by
\begin{align}\label{CG-rho}
\rho_\Gamma(D) := \eta_\Gamma(\widetilde{D}) - \eta(D).
\end{align}
The Atiyah-Patodi-Singer rho invariant is associated to a pair of 
 unitary representations of the fundamental group $\pi_1(M)$ of the same dimension: $\alpha, \beta: \pi_1 (M) \to U(\ell)$.
Consider the flat vector bundles 
$$E_\alpha= \widetilde{M} \times_\alpha\bbC^\ell \,,\quad E_\beta= \widetilde{M} \times_\beta \bbC^\ell$$
We can  define twisted Dirac operators $D_\alpha$ and $D_\beta$, respectively.
One then defines the Atiyah-Patodi-Singer rho invariant associated to $(D,\alpha,\beta)$ by
\begin{align}\label{APS-rho}
\rho_{\alpha-\beta}(D) := \eta(D_\alpha) - \eta(D_\beta).
\end{align}

Rho invariants have very interesting applications. For example, the spin Dirac operators can be used in order to distinguish bordism 
classes of metrics of metrics of positive scalar curvature; this is work of Botvinnik and Gilkey \cite{BoGi2}  and, more
generally,  Piazza-Schick \cite{PS1}. Similarly,  rho invariants for the signature operator are employed in order to distinguish homotopy equivalent 
manifolds that are non-diffeomorphic; this is work of Chang-Weinberger \cite{ChWe}. \medskip

All these contributions assume that there is some torsion in the fundamental group of the manifold.
If, on the other hand, the fundamental group of the manifold is  torsion free and the (maximal) 
Baum-Connes map is an isomorphism, then these secondary invariants behave like primary invariants, i.e.
they are zero for the spin Dirac operator associated to a positive scalar curvature metric and they are homotopy invariant for the signature
operator. This is a theorem of Keswani \cite{Kes1, Kes2}, reproved by Piazza-Schick using bordism  techniques, see \cite{PS2}, and 
reproved lately by Higson-Roe \cite{HiRo} for the APS-rho invariant and by Benameur-Roy \cite{BeRo} for the Cheeger-Gromov rho invariant, 
using coarse index theory techniques. 

\subsection{Statement of the main results}
In this paper we study the classical statements from above in case
where the compact manifold $M$ admits an incomplete edge singularity. 
Index theory has been extended from manifolds with 
boundary to spaces with conical singularities by Cheeger e.g. in \cite{Che:SGS}, see also
Br\"uning and Seeley \cite{BruSee:ITF},
Lesch \cite{Les:OOF}, Lesch and Wojciechowski \cite{LW}, Fedosov-Schulze-Tarkhanov
\cite{FST} and Chou \cite{Chou}, to name a few. Singular analysis has been employed by Bismut and Cheeger
\cite{BiCh} in their families index theorem on manifolds with boundary, where they
assumed invertibility of the boundary Dirac operators. Let us also mention work
by Br\"uning \cite{Bru} for the signature operator on simple edge spaces of Witt type
and the work by Albin and Gell-Redman \cite{Albin-Jesse} for the spin Dirac operator
on simple edge spaces satisfying a geometric Witt condition (see also the recent
contribution  \cite{AGR17}); in these articles an explicit index formula is proved. \medskip

Hodge theory on singular spaces has been developed by Cheeger 
in his seminal papers e.g. \cite{Che:SGS} and by Lesch \cite{Les:OOF},
Hunzicker-Mazzeo \cite{HM}, Bei \cite{Bei},
Albin, Leichtnam, Mazzeo and the first named author in \cite{signature-package},
as well as in \cite{Cheeger-spaces}.

\medskip
A simple incomplete edge manifold is a smoothly stratified space of 
depth equal to one. Such a space admits a resolution 
given by a compact manifold $M_c$ with a boundary $\partial M_c$
which fibers $\phi:\partial M_c \to B$ over a closed manifold $B$, the edge singularity, and fibre $F$, 
a closed manifold as well. A precise definition of smoothly stratified spaces of any depth is given in 
\cite{BHS}, cf. also \cite{Novikov}. Given a Riemannian metric $g^B$ on the edge $B$, and a symmetric $2$-tensor 
$\kappa$ on $\partial M$ that restricts to a smooth family of Riemannian metrics on the fibres, 
the singular edge structure in an 
neighborhood $\U$ of the boundary is given by the Riemannian metric ($x$ denotes the defining 
function of the boundary)
$$
g \restriction \U = dx^2 + x^2 \kappa + \phi^* g^B + h,
$$
where $h$ is a lower order perturbation defined in Definitions \ref{d-edge} and 
\ref{def-admissible}. \medskip

In this paper we shall mainly concentrate on the depth-1 case, thus requiring 
 the links $F$ to be compact and smooth. However, for later use, we shall also allow special depth-2
 stratified spaces, with the most singular stratum equal to a discrete collection of points 
 $P$ and associated link equal to a depth-1 stratified space;
we shall also require the metric in a neighbourhood $\U$ of  $P$ to be exact, 
i.e. of the form $g \restriction \U = dx^2 + x^2 \kappa$.
In this specific situation, our analysis actually allows for the link $F$ to 
be any compact smoothly stratified space, as long as self-adjoint
extensions of certain geometric operators on $F$ admit discrete 
spectrum of finite multiplicitiy. \medskip

Our first main result discusses existence of eta invariants for the signature and spin Dirac 
operators under certain additional assumptions on the metric $g$ and under 
a geometric Witt condition which will be made explicit below, see
Assumption \ref{Witt}.

\begin{prop} Let $(M,g)$ be an incomplete edge space with an admissible
edge metric.  Let $D$ be either the signature or the spin Dirac operator, satisfying the geometric 
Witt condition. Then
\begin{enumerate}
\item[(i)]  if $m$ is even, the eta invariant $\eta(D)$ is identically zero,
\item[(ii)] if $m$ is odd, the eta function $\eta(D,s)$ has at most a first order pole singularity at $s=0$.
Then the residue at $s=0$ is local over the edge $B$ in the sense it is given by an integral over the base $B$ 
of a function which is local along $B$ and depends globally
on the fibres of the edge fibration $\phi: Y \to B$\footnote{We shall 
be more precise below in Theorem \ref{trace-coefficients-flat}.}.
\end{enumerate}
In case $M$ is odd dimensional with even dimensional edges, we can say even more.
\begin{enumerate}
\item[(i)] if the scalar curvature on $M$ is positive and bounded uniformly away from zero, 
the eta-invariant $\eta(D)$ of the spin Dirac operator $D$ is well-defined,
\item[(ii)] the eta invariant $\eta(D)$ of the signature operator $D$ is well-defined.
\end{enumerate}
Corresponding statements hold also for the $\Gamma$-eta
function $\eta_\Gamma(\widetilde{D},s)$ and the $\Gamma$-eta invariant $\eta_\Gamma(\widetilde{D})$ 
on Galois coverings of admissible edge spaces. 
\end{prop}

These results will be established in Propositions \ref{eta-main2} and \ref{eta-main-Galois2}, 
and in Corollary \ref{eta-exists-geometric}. \medskip

Our arguments extend to any formally self-adjoint perturbation of one of the 
three geometric Dirac operators, namely the Gauss-Bonnet, the signature 
and the spin Dirac operators, which is lower order in a suitable sense which will be made
precise later. We call such Dirac-type operators $D$ \emph{allowable}
(while we keep the word \emph{geometric} for the unperturbed operators \footnote{In this paper we shall care
to distinguish the signature operator in even and odd dimensions and for this reason we shall
often refer to them as the signature operator (in even dimensions) and the odd-signature operator
(in odd dimensions).}). Moreover we require $D$ to be even or odd with 
respect to some even subcalculus of the heat calculus (see Section \ref{even-odd-section} 
for the relevant notions). Eta-functions and eta-invariants for such allowable perturbations is discussed in 
our next main result.

\begin{thm}\label{main2}
Assume $(M,g)$ is an incomplete simple edge space with an admissible edge metric.
Let $D$ be an allowable Dirac-type operator satisfying the geometric Witt condition; 
let $m$ denote the dimension of $M$ and $b$ the dimension of the edge $B$.
Then we have: 
\begin{enumerate}
\item[(i)] Assume that $m$ is even and $(D,b)$ are of same parity, i.e. 
either $D$ and $b$ are both even, or $D$ and $b$ are both odd.
Then the eta invariant $\eta(D)$ is well-defined. \medskip

\item[(ii)] Assume that $m$ is odd, and $(D,b)$ are of same parity.
Then $\eta(D,s)$ admits a simple pole at $s=0$ with the residue coming from the interior only.
More precisely, the residue is an integral over $M$ of a function $f(p)$ which depends\footnote{
The dependence on the full symbol is functorial in the following sense: the full symbol is not a coordinate invariant
expression, however $f(p)$ is independent of a particular choice of coordinates, see e.g. \cite[Lemma 1.8.2]{Gilkey}.} 
pointwise on a finite number of jets of the full symbol of $D$ at a given point $p \in M$.  \medskip

\item[(iii)] Assume that $m$ is even and $(D,b)$ are of opposite parity, i.e. 
either $D$ is even and $b$ is odd, or $D$ is odd and $b$ is even.
Then $\eta(D,s)$ admits a simple pole at $s=0$ with the residue given by an 
interior term as in \textup{(ii)} and a boundary term 
coming from the edge. The contribution from the edge is an integral over $B$ of a function which is global
in the fibres of the edge fibration $\phi: Y \to B$ and local along the base $B$\footnote{We shall 
be more precise below in Theorem \ref{trace-coefficients-flat}.}.\medskip

\item[(iv)] Assume that $m$ is odd and $(D,b)$ are of opposite parity. Then the eta function $\eta(D,s)$ 
may admit a second order pole singularity at $s=0$. The Laurent coefficient of $s^{-2}$ 
in the Laurent expansion of $\eta(D,s)$ at $s=0$ is an interior term as in $\textup{(iii)}$. 
The residue is of the same structure as the residue in $\textup{(iii)}$.\medskip

\item[(v)] Assume $M$ is boundary of some admissible edge space $X$ with 
an even or odd Dirac operator $\mathbb{D}$, satisfying the geometric Witt condition and such that $D$ is the  
tangential operator of $\mathbb{D}$ as in \eqref{boundary-intro}. 
Moreover, assume that for the dimension $b$ of each edge in $X$, 
at least one of the numbers $(m+1-b)$ and $b$ is odd.
Then the eta-invariant $\eta(D)$ is well defined. 
\end{enumerate}

The case where the edge $B = \bigcup\limits_{i=1}^k B_i$ is a union of connected components $B_i$ of dimension $b_i$,
is dealt with a combination of the cases above. \medskip

Corresponding statements hold also for the $\Gamma$-eta
function $\eta_\Gamma(D,s)$ and the $\Gamma$-eta invariant $\eta_\Gamma(D)$ 
on Galois coverings of admissible edge spaces. 

\end{thm}

This theorem gathers together the results that will be established in Propositions \ref{eta-main} 
and \ref{Galois-main-eta}, as well as in Theorems \ref{eta-exists} and \ref{eta-exists-Galois}. \medskip

The eta-invariants of the geometric operators are trivially zero 
in case of dimension $m$ being even, due to symmetry of the spectrum.
However, when discussing general \emph{allowable} Dirac-type operators, the
geometric examples may well be perturbed in such a way to destroy the spectral symmetry.
Similarly, the eta-invariant of the Gauss-Bonnet operator is zero both
in even and in odd dimension; however, this is no longer true if we pass to an allowable
perturbation of the geometric operator.
\medskip

A remark on Theorem \ref{main2} (v) is in order. 
Let $D$ be equal to the signature operator and let $M$ be odd dimensional
with even dimensional edge singularities. 
We can always construct a bounding stratified pseudomanifold  $X$
by setting $X$ to be a finite cone over $M$: $X=\mathscr{C}(M)$. 
This defines an iterated cone edge space which may still be covered by the analytic
arguments given here; in particular the extension of Theorem \ref{main2} (v) to this case holds.
If $M$ is odd dimensional with even dimensional edges and satisfies the 
geometric Witt condition, the cone over $M$ also satisfies the geometric Witt condition (indeed, the link of the tip of the cone 
is odd dimensional) and so all the assumptions of Theorem \ref{main2} (v) are fulfilled and we can conclude that the eta
invariant of the odd-signature operator on $M$ is well defined.
The precise statement is given in Corollary \ref{eta-exists-geometric}. 
Similarly, if $M$ is spin, odd dimensional with an edge $B$ of even dimension and an edge metric of uniform positive scalar curvature 
then the cone over $M$ satisfies the geometric Witt condition (provided we suitably scale the metric) and so, 
by  Theorem \ref{main2} (v), the eta invariant of the
spin Dirac operator on $M$ is well defined.
These two examples are particularly important for applications and therefore
singled out in Corollary \ref{eta-exists-geometric}. 
\medskip

A central component in our argument is the even/odd heat calculus developed 
by the second author jointly with Mazzeo in \cite{MazVer}. In that reference the authors studied analytic
torsion on simple edge manifolds, which required a detailed analysis of the heat kernel. 
In the present paper we give more details and provide a more direct treatment of the even and odd subcalculi. 
The heat kernel on a manifold with isolated conical singularities has been studied 
by Cheeger in his seminal paper \cite{Che:SGS}; the general Witt case is also treated in \cite{Che:SGS}, 
see the iterative argument given in Section 7.4 there\footnote{Cheeger \cite[\S 7.4]{Che:SGS}, 
studies the heat kernel of the signature operator on  Witt spaces of arbitrary depth. 
In that respect his result is much more general than the Witt spaces of depth one, 
considered here. However, it should be remarked that Cheeger considers 
piecewise flat cone-edge metrics, which are locally 
isometric to products of smooth open subsets with exact
cones. Thus the edge metrics considered here 
are much more general, since we allow for fibrations of not necessarily 
exact cones and moreover do not require the fibrations to be locally 
isometric to products of smooth open subsets and cones.}.
Mooers \cite{Moo} gives a microlocal approach to the study of 
the heat kernel in the isolated case; this is the approach that is 
generalized by Mazzeo and the second author to the simple edge case.
\medskip

Going back to the contents of this paper, our second 
main result extends the Atiyah-Patodi Singer index formula
on compact manifolds with boundary $M$ and on their infinite Galois coverings to
the case of $M$ being an edge manifold as above.

\begin{thm}\label{main2-APS}
Consider an odd-dimensional incomplete edge space $M$ with an admissible edge metric $g$.
Let $D$ be an allowable Dirac-type operator, satisfying the geometric Witt condition.
Assume $M$ is boundary of some even-dimensional incomplete edge space $X$ with an admissible edge metric
 and an allowable Dirac-type operator $\mathbb{D}$ of the form 
\eqref{boundary-intro} near the boundary, satisfying the geometric Witt condition.
We denote by $\phi: Y \to B$ the fibration of links over the singular stratum of $X$. 
Assume that $\mathbb{D}^2$ is even and that the dimension $b$ of each edge singularity in $X$ is odd.
Then the Dirac operator $\mathbb{D}$ with APS boundary conditions is Fredholm, 
the eta invariant of $D$ is well-defined and is related to the Fredholm index of $\mathbb{D}$ by the 
index formula
\begin{align}\label{two-integrals}
\textup{index} \, \mathbb{D} = \left(\int_X a_0 + \int_{B} b_0 \right)- \frac{\dim \ker D+ \eta(D)}{2},
\end{align}
where we have the following characterization of the integrands $a_0$ and $b_0$.
\begin{enumerate}
\item The integrand $a_0$ is in fact the same as in the classical formula \eqref{APS}.\medskip

\item The integrand $b_0$ comes from the edge singularity 
in the sense that at each $p\in B$ the value $b_0(p)$ is global
in the fibres of the edge fibration $\phi: Y \to B$ and local along the base $B$\footnote{We shall 
be more precise below in Theorem \ref{trace-coefficients-flat}.}.
\medskip

\item If $\mathbb{D}$ is an allowable perturbation of a geometric Dirac operator 
twisted by a flat vector bundle $E$, then the coefficients $a_0$ and $b_0$ 
depend additionally only on the rank of $E$. 
\end{enumerate}

The corresponding statement carries over to the setting of Galois coverings, where 
index of $\mathbb{D}$, dimension of $\ker D$ and the eta-invariant $\eta(D)$ are 
replaced by their corresponding Galois covering versions.
\end{thm}

\begin{remark}
We point out that the presence of an integral along the edge singularity in the index 
formulae \eqref{two-integrals} associated to an edge manifold is a general phenomenon and in fact appears 
already in the signature index formula of Br\"uning \cite{Bru} and in the spin index formula
of Albin and Gell-Redman \cite{Albin-Jesse}. 
\end{remark}

Next we prove the important result that the rho invariants of Atiyah-Patodi-Singer and of Cheeger-Gromov
are well defined in the singular setting.

\begin{thm}\label{main1}
Assume $(M,g)$ is an incomplete simple edge space with an admissible edge metric.
Let $D$ be an allowable Dirac-type operator satisfying the geometric Witt condition.
Then the APS and the Cheeger-Gromov rho invariants 
are well-defined.
\end{thm}

We conclude the paper with the following stability results.

\begin{thm}\label{main3}
Assume $(M,g)$ is an odd-dimensional incomplete simple edge space with an admissible edge metric.
The following holds:
\begin{enumerate}
\item the APS and Cheeger-Gromov rho invariants of the spin Dirac operator associated to an edge
metric of positive scalar curvarure are bordism invariant\footnote{the bordisms are assumed to have edges of odd dimension.}
of  positive scalar curvature edge metrics;
\item the APS and Cheeger-Gromov rho invariants for the signature operator are invariant under variations of $g$ among 
admissible edge metrics satisfying the Witt condition. Consequently, the rho invariant for the signature operator is 
a stratified diffeomorphism invariant. 
\end{enumerate}
\end{thm}

\emph{Acknowledgements.} 
We thank Pierre Albin, Marcus Banagl, Matthias Lesch, Rafe Mazzeo and Jonathan Woolf for
valuable discussions. We are grateful to the anonymous referee for  a careful reading 
of the original manuscript and  for useful 
suggestions.
We thank Sapienza Universit\`a di Roma and M\"unster University for hospitality and financial
support. 

\section{Review of geometry on incomplete edge spaces}\label{geometry-section}

Consider a compact smoothly stratified space $\overline{M}$ of depth 1 and dimension $m$: 
we shall assume that $\overline{M}$ is the disjoint union of the top stratum $M$, a smooth manifold 
of dimension $m$, and finitely many lower dimensional strata $\{B_i\}, i\in I$, where each $B_i$ is a closed compact manifold
of dimension $b_i$. For notational simplicity we continue with the case of a single stratum $B$ of dimension $b$, the
general case is studied in an analogous way. The stratification hypothesis asserts the existence of an open 
neighbourhood $U\subset \overline{M}$ around the singular stratum $B$,
together with a distance function $x:U \to [0,1)$, such that $U\cap M$ 
is the total space of a smooth fibre bundle over $B$ with the fibre 
given by a truncated cone $\mathscr{C}(F)=(0,1)\times F$ 
over a compact smooth manifold $F$ of dimension $f\geq 1$.
The distance function $x$ restricts to a radial function of that cone on each fibre.
We shall refer to such a stratified space as a {\it simple edge space},
or, shortly, as an edge space.
\medskip

The stratified space $\overline{M}$ can be resolved to 
define a compact manifold $M_c$, with boundary $\partial M_c$
being the total space of a fibration $\phi: \partial M_c \to B$ with fibre $F$. 
Under the resolution, the neighborhood $U$ lifts to a collar neighborhood 
$\U \subset M_c$, which is a smooth fibration of cylinders 
$[0,1)\times F$ over $B$ with radial function $x$.
The open regular  stratum $M$ is identified with $M_c \backslash \partial M_c$.\medskip

\medskip
\noindent
{\bf Notation:} with a small abuse of notation we shall denote $\partial M_c$ by $\partial M$.

\medskip

An edge structure on $M$ is defined by a particular choice 
of a Riemannian metric.

\begin{defn}\label{d-edge}
An incomplete Riemannian simple edge space is a depth-one smoothly stratified space  $\overline{M}$ together
with a Riemannian metric $g$ on $M=M_c \backslash \partial M$ such that  over 
$\U\backslash \partial M$ the metric attains the form $g_0+h$ with
$$g_0 =dx^2+x^2 \kappa +\phi^*g^B,$$
where $g^B$ is a Riemannian metric on the closed manifold $B$, 
$\kappa$ is a symmetric 2-tensor on the fibration $\partial M$ restricting to a
smooth family of Riemannian metrics on fibres $F$, $|h|_{g_0}$ is smooth on $\U$ and 
vanishes at $x=0$. 
\end{defn}

Notice that such a Riemannian metric $g$ is incomplete. For this reason we
shall refer to $(M,g)$ as an incomplete simple edge space. On the other hand, the metric $x^{-2} g$ is
a complete Riemannian metric and $(M,x^{-2}g)$ is referred to as a complete edge space.
\medskip

We extend the class of edge singular manifolds slightly in \S \ref{iterated-section}. \medskip

Our analysis requires  $\phi: (\partial M, \kappa + \phi^*g^B) 
\to (B, g^B)$ to be a Riemannian submersion in the following sense. 
If $p\in \partial M$, then the tangent bundle $T_p\partial M$ splits into vertical and horizontal subspaces  
$T^V_p \partial M \oplus T^H_p \partial M$; the vertical subspace $T^V_p\partial M$ is the tangent space to the fibre of 
$\phi$ through $p$, and the horizontal subspace $T^H_p \partial M$ is the annihilator of the subbundle 
$T^V_p\partial M \lrcorner \kappa \subset T^*\partial M$ ($\lrcorner$ denotes contraction).  
$\phi$ is  a Riemannian submersion if the tensor $\kappa$ restricted to $T^H_p \partial M$ vanishes. 
\medskip

Finally, we require the tensor $h$ to be even, in the sense that $h$ admits an asymptotic
expansion as $x\to 0$ containing only even powers of $x$, up to $dx$-cross terms which 
are required to admits only odd powers of $x$ in their expansion. This condition was introduced
in \cite{MazVer} in order to achieve that the heat kernel of the corresponding Hodge 
Laplacian lies in a distinguished \emph{even} subcalculus. In fact similar evenness conditions 
have been employed in various geometric settings, cf. for example \cite{Scott}, \cite{Albin} and \cite{AR}.\medskip

Evenness of $h$ is well-defined only within a particular \emph{even} equivalence class of coordinate charts near the edge. 
A local coordinate system $(\wx, \wy, \wz)$ in $\U$ is said to be in the even equivalence class of a coordinate chart $(x,y,z)$
if the asymptotics of $\wx/x$, $\wy$ and $\wz$ near the edge admits only powers of $x^2$, with coefficients in the 
expansions depending smoothly on $y$ and $z$. We refer to coordinates within a fixed even equivalence class as \emph{special}
coordinates and will stay within the special coordinates henceforth.
\medskip

We summarize these conditions into the notion of \emph{admissible} edge metrics.

\begin{defn}\label{def-admissible}
Let $(M,g)$ be an incomplete Riemannian manifold with an edge. The edge metric $g=g_0+h$ is said to be 
admissible if $\phi: (\partial M, \kappa + \phi^*g^B) \to (B, g^B)$ is a Riemannian submersion 
with fibres $F$, and $h$ is even within a fixed 
choice of special coordinates in the sense above.
\end{defn} 

The condition on $\phi$ to be a Riemannian submersion
can be motivated here. \medskip

Pick local coordinates 
$y=(y_1,...,y_{b})$ on $B$ lifted to $\partial M$ and then extended inwards to $\U$. 
Let $z=(z_1,...,z_f)$ restrict to local coordinates on $F$ along each fibre of the boundary. 
Such a choice of $(x,y,z)$ defines local coordinates on $\U \cap M$. 
Consider the Hodge Laplace operator $\Delta$ on $(M,g)$
and define for any $y_0 \in B$ the normal operator 
$N(x^2\Delta)_{y_0}$ as the limit of $x^2\Delta$ with respect to conjugation by the local family of dilations 
$(x,y,z) \to (\lambda x, \lambda (y-y_0), z)$ as $\lambda \to \infty$. \medskip

If $\phi$ is a Riemannian submersion, then
$N(x^2\Delta_p)_{y_0}$ is naturally identified with $s^2$ times the Hodge Laplacian on the 
model edge $\R^+_s \times F \times \R^b$ with the model edge metric $g_{\textup{ie}} = 
ds^2 + s^2 \kappa_{y_0} + g^B_{y_0}$,
where we identified $T_{y_0}B=\R^b$. Similar decomposition holds for the spin Dirac and
the spin Laplace operators as worked out in \cite[Lemma 2.2]{Albin-Jesse}.
This structure of the normal operator is central to the microlocal construction of 
the heat kernel in \cite{MazVer}, as it allows a decomposition of the initial heat parametrix
into an explicit conical and euclidean part.

\subsection{Incomplete edge differential operators}

A  central element in our approach to singular edge spaces is the notion of edge vector fields.
Define the space of edge vector fields $\mathcal{V}_e$ to be the space of vector fields smooth 
in the interior of $M_c$ and tangent at the boundary $\partial M_c$ to the fibres of the fibration. 
$\mathcal{V}_e$ is closed under the ordinary Lie bracket of vector fields, and hence defines a Lie algebra. 
In local coordinates, the complete edge vector fields $\mathcal{V}_e$ are locally generated by 
\[
\left\{x\frac{\partial}{\partial x}, x\frac{\partial}{\partial y_1}, \dots, x \frac{\partial}{\partial y_b}, 
\frac{\partial}{\partial z_1},\dots, \frac{\partial}{\partial z_f}\right\}.
\]
These vector fields are of bounded length with respect to the complete edge metric 
$x^2 g$, where the radial function $x: \mathscr{U} \to (0,1)$ is extended smoothly to a nowhere
vanishing function in the interior of $M$.\medskip

By the Serre-Swan theorem there exists a finite rank vector bundle ${}^eTM$
such that the edge vector fields $\mathcal{V}_e$ 
form a spanning set of sections, i.e. $\mathcal{V}_e=\Gamma({}^eTM)$. 
We call ${}^eTM$ the complete edge tangent bundle.
Sections of the dual bundle ${}^eT^*M$, called the complete edge cotangent bundle, are spanned locally the one-forms
\begin{align}\label{triv}
\left\{\frac{dx}{x}, \frac{dy_1}{x}, \dots, \frac{dy_b}{x}, dz_1,\dots,dz_f\right\}.
\end{align}
Though singular in the usual sense, these one-forms are smooth as sections of ${}^eT^*M$
and are of bounded norm with respect to the complete edge metric $x^2g$ on differential forms.
We define the incomplete edge cotangent space ${}^{ie}T^*M$ 
by setting $\Gamma({}^{ie}T^*M) = x \Gamma({}^{e}T^*M)$, which amounts
to ${}^{ie}T^*M$ being spanned locally by 
\[
\left\{dx, dy_1, \dots, dy_b, x dz_1,\dots, x dz_f\right\}.
\]
We denote $p$-th order exterior power of ${}^{ie}T^*M$ by ${}^{ie}\Lambda^p M$. 
Smooth sections of the incomplete edge cotangent bundle, as well as its exterior powers
are of bounded norm with respect to the incomplete edge metric $g$ on differential forms.
\medskip

Let $E$ denote any flat Hermitian vector bundle over the edge manifold $\overline{M}$, 
defined by a unitary representation $\rho:\pi_1(\overline{M}) \to U(\ell)$. Such vector bundles
are defined on all of $\overline{M}$ and not only on the regular part of $M$. \medskip 

If $M$ is spin, we denote by $S$ the corresponding spinor bundle.
We define the spaces of edge differential operators
$\textup{Diff}_e^k(M,{}^{ie}\Lambda^*T^*M \otimes E)$
and $\textup{Diff}_e^k(M, S\otimes E)$ as spaces of 
differential operators over $M$ acting on compactly supported sections 
$\Gamma_0({}^{ie}\Lambda^*T^*M \otimes E)$ and $\Gamma_0(S\otimes E)$, 
respectively, and given locally over the singular neighborhood 
by a sum of products of elements of edge vector fields $\mathcal{V}_e$ 
with smooth (on $M_c$) matrix-valued coefficients, with respect to suitable local trivializations
of the vector bundles. Thus any $L\in \textup{Diff}_e^k$ is of the local form
\begin{equation}
L=\sum_{j+|\A|+|\beta|\leq k} a_{j,\A,\beta}(x,y,z)(x\partial_x)^j(x\partial_y)^{\A}\partial_z^{\beta}, 
\end{equation}
with each $a_{j,\alpha,\beta}$ matrix-valued and smooth up to $x=0$. 
As explained in the foundational work of Mazzeo, see \cite{Maz:ETO}, the analytic
properties of an edge differential operator are governed by the indicial operator,
a family of operators on $\RR^+ \times F_y$, $y\in B$ and the normal operator,
a family of operators on $\RR^+\times \RR^{\dim B} \times F_y$, $y\in B$.
The Mellin transform of the indicial operator computed at $z=0$ defines the vertical family
associated to $L$.
A differential operator $P$ of order $k$ is an \emph{incomplete} edge operator if $x^k P\in \textup{Diff}_e^k$,
i.e. if $P \in x^{-k} \textup{Diff}_e^k$. Moreover, we denote by $L^2$ the $L^2$-closure of compactly supported
sections of the respective vector bundles and define the maximal domain of $P$ by
\begin{align}
\dom_{\max}(P) := \{u \in L^2 \mid Pu \in L^2 \},
\end{align}
where $Pu \in L^2$ is understood in the distributional sense. 
The minimal domain $\mathscr{D}_{\min}(D)$ is defined as a subspace of 
$\mathscr{D}_{\max}(D)$, given by the graph closure of $\Gamma_0$, where the
graph norm is given in terms of the $L^2$ norm $\| \cdot \|_{L^2}$ by 
$\| u \|:= \| u \|_{L^2} + \| P u \|_{L^2}$. \medskip

We say that $P$ is \emph{locally independent of the twisting vector bundle}
$E$ up to conjugation if the following is true, cf. \S \ref{locality-section} for a more detailed
account. Consider another flat vector bundle $E'$  of the same rank, defined by a representation 
of the fundamental group $\pi_1(\overline{M})$. Fix after an 
eventual refinement a trivializing atlas for both $E$ and $E'$, for which the transition functions are locally constant.
Near the edge singularity we can take such a trivializing atlas made of distinguished neighbourhoods of the form 
$\mathcal{V}\times \overline{\mathscr{C}(F)}$ where $\mathcal{V} \subset B$ is a contractible open neighborhood.
Each such neighborhood comes with a local bundle isomorphism 
 \begin{align}
 \theta: E \restriction \mathcal{V}\times \overline{\mathscr{C}(F)} \to 
 E' \restriction \mathcal{V}\times \overline{\mathscr{C}(F)}.
\end{align}
Abusing notation, we denote the corresponding map between local sections of $E_1$ and $E_2$
by $\phi$ again.
We say that $P$ is locally independent of the twisting flat vector bundle up to conjugation, 
if $P \equiv P_E$ is associated by some geometric procedure to a flat vector bundle $E$ 
over $\overline{M}$ of fixed rank, such that for any two such vector bundles $E$ and $E'$
\begin{align}
P_E \restriction \mathcal{V}\times \overline{\mathscr{C}(F)} 
= \theta^{-1} \circ P_{E'} \circ \theta \restriction \mathcal{V}\times \overline{\mathscr{C}(F)}. 
\end{align}
A similar definition can be given for an operator 
$P \in \textup{Diff}_e^k$. Clearly if $P_E$ is of the form $Q\otimes \operatorname{Id}_E$ for some 
differential operator in $\textup{Diff}_e^k(M,{}^{ie}\Lambda^*T^*M)$ or $\textup{Diff}_e^k(M, S)$, 
then $P$ is independent of the twisting flat vector bundle. \medskip

Notice that the canonical flat connections of $E$ and $E'$ restricted to $M$ 
will have trivial connection one-forms in the fixed atlas (intersected with M). Using the vanishing of the 
connection one-forms in the trivializing atlas we have chosen, we conclude
that the three geometric Dirac operators, i.e. the signature, the Gauss Bonnet
and the spin Dirac operators, associated to an \emph{incomplete} edge metric and twisted by a flat unitary
connection on the flat vector bundle $E$, are all examples of incomplete edge operators of order $1$
which are locally independent of the twisting flat vector bundle up to conjugation.

\begin{defn}\label{allowable}
We call an elliptic first order formally self-adjoint incomplete edge differential operator $D \in x^{-1}\textup{Diff}_e^1$
{\bf allowable Dirac-type}, if $D=D^E_{{\rm geo}} + P$, with $D^E_{\rm geo} \in x^{-1}\textup{Diff}_e^1$ being one of the three
geometric Dirac operators twisted by a flat vector bundle $E$ and $P\in \textup{Diff}_e^1$ 
locally independent of the twisting flat vector bundle up to conjugation.\footnote{We may in principle extend the
class of possible perturbations $P$ to include certain pseudo differential operators, subject to the condition 
that we can construct the heat kernel of $D^2$ microlocally.}
\end{defn}

We shall impose the geometric Witt condition throughout the paper and we now proceed to carefully define it: 
we associate to each $D$ the vertical family $D_F \equiv \{D_{F}(y)\}$ 
of Dirac operators acting on the links $(F_y,\kappa_y)$ at each point $y\in B$ of the edge fibration $\phi:Y\to B$.
Since $F$ is a closed compact manifold, each $D_F(y)$ is essentially
self-adjoint and admits a discrete spectrum $\textup{Spec}(D_F(y))$ which is allowed to vary with 
$y \in B$. We write
\begin{equation}
\textup{Spec}(D_F) := \bigcup_{y\in B} \textup{Spec}(D_F(y)).
\end{equation}

\begin{Assumption}\label{Witt} (Geometric Witt condition)
If $D$ is an allowable perturbation of the spin Dirac operator, then we assume that 
\footnote{For the spin Dirac operator our notion of  geometric Witt condition 
comes from \cite{Albin-Jesse}.}
\begin{equation}\label{witt-spin}
\textup{Spec}(D_F) \cap \left(-\frac{1}{2}, \frac{1}{2}\right) = \varnothing.
\end{equation}
If $D$ is an allowable perturbation of the Gauss-Bonnet operator or the signature operator, then
we require that the 
\begin{equation}\label{witt-sign}
\textup{Spec}(D_F) \cap \left(-1, 1\right)  =\{0\}\,,\quad H^{k}(F, E)=0 \;\;\text{if}\;\;\dim F=2k .
\end{equation}
\end{Assumption}

\medskip
In case of the Gauss-Bonnet and the signature operators, as well as in the case of the spin Dirac operator 
satisfying that the scalar curvature is positive and uniformly bounded away from zero, the geometric
Witt condition for $D$ can be obtained assuming the geometric Witt condition for the untwisted geometric
Dirac operator $D_{\rm geo}$ and using a suitable scaling. See Subsection \ref{section-eta-bdry} for details.
\medskip

A comment about essential self-adjointness of $D$.  In case of the 
spin Dirac operator acting on sections of the spinor bundle $S$, essential self-adjointness
of $D$ as an unbounded operator in the space of square integrable sections 
$L^2(M,S)$ with core domain $\Gamma_c (M,S)$
is a consequence \footnote{In fact, in the setting of isolated conical singularities the geometric
Witt condition is equivalent to essential self-adjointness of the spin Dirac operator, see for instance 
\cite[Theorem 3.2]{Chou}} of the geometric Witt condition \eqref{witt-spin}, see  
Albin and Gell-Redman \cite[Theorem 1.1]{Albin-Jesse}.
Building on the work of Chou, Albin and Gell-Redman also prove in \cite[Section 7]{Albin-Jesse} that 
if the scalar curvature of $M$ is non-negative in an open neighborhood of the edge, 
then the geometric Witt condition \eqref{witt-spin}
is satisfied. Notice that for the argument of Albin and Gell-Redman to work we need to assume,
additionally, that the vertical tangent bundle to the fibration $F\to \partial M\to B$ is spin.
(Equivalently, we can assume that $B$ is spin: see Lawson-Michelson \cite[Ch. II, Section 1, Proposition 1.15]{LM}.) Indeed, the assumption that $M$ is spin only implies 
that $\partial M$ is spin; however, for the Lichnerowicz argument to work we need the spin assumption
on the links and this is not automatic.  
The spin assumption on the vertical tangent bundle of  $F\to \partial M\to B$, or, equivalently, on the singular stratum $B$, must therefore be added to the hypothesis of the Theorem 
stated in \cite[Section 7]{Albin-Jesse}. \medskip

The  geometric Witt condition  \eqref{witt-sign} in the Hodge - de Rham setting also  ensures essential
self-adjointness of the Gauss-Bonnet and the signature operators. This is implicit in the work of Cheeger
and addressed explicitly in the work of Albin, Leichtnam, Mazzeo and Piazza \cite[Proposition 5.11]{signature-package}.
\medskip

By inspection of the proofs in \cite[Theorem 1.1]{Albin-Jesse} and 
\cite[Proposition 5.11]{signature-package} \footnote{indeed, the indicial and normal operator at $y\in B$ do not see the lower order perturbation 
$P$.} one can 
establish the following 

\begin{proposition}\label{ess-sa-allowable}
If $D=D^E_{{\rm geo}}+P$ is an allowable Dirac operator and satisfies the geometric Witt condition, 
then $\dom_{\min}(D) = \dom_{\max}(D)
= \dom_{\min}(D^E_{{\rm geo}}) = \dom_{\max}(D^E_{{\rm geo}})$
and in particular, $D$ is essentially self-adjoint.
\end{proposition}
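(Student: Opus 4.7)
The plan is to reduce the proposition to the untwisted geometric cases already settled in \cite[Theorem 1]{Albin-Jesse} and \cite[Proposition 5.11]{signature-package}, via two successive simplifications: a local trivialization of the flat twisting bundle $E$, and an observation that the perturbation $P$ is invisible to the indicial and normal operators at the edge. The footnote attached to the statement signals the basic point: those two edge invariants, on which the cited essential self-adjointness arguments exclusively rely, are insensitive both to $E$ and to $P$.

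First I would note that by a partition of unity the equality $\dom_{\min}(D) = \dom_{\max}(D)$ is a local question in a neighbourhood of $\partial M$: away from a collar of the edge, $D$ is an honest elliptic first-order operator with smooth coefficients on a compact manifold, for which the equality follows from standard elliptic regularity. Covering $\overline{\U}$ by distinguished neighbourhoods $\mathcal{V}\times\overline{\mathscr{C}(F)}$ with $\mathcal{V}\subset B$ contractible, it suffices to approximate any $u\in\dom_{\max}(D)$ supported in one such neighbourhood by compactly supported smooth sections in the graph norm. On such a neighbourhood, contractibility together with the unitary holonomy of $E$ allows the choice of a fibrewise unitary trivialization $\theta\colon E\to \bbC^\ell$, which induces a unitary of $L^{2}$-spaces. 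Conjugation by $\theta$ sends $D^{E}_{\rm geo}$ to $D_{\rm geo}\otimes\operatorname{Id}_{\bbC^\ell}$ (the flat connection one-form vanishes in a parallel frame) and, by the standing hypothesis that $P$ is locally independent of the twisting bundle up to conjugation, sends $P$ to some $P_{\rm triv}\in\textup{Diff}_{e}^{1}$ acting on $\bbC^\ell$-valued sections. Thus locally $D$ is unitarily equivalent to $D_{\rm geo}+P_{\rm triv}$ on $\ell$ copies.

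Next I would argue that $P_{\rm triv}$ does not affect the edge analysis. Since $D_{\rm geo}\in x^{-1}\textup{Diff}_{e}^{1}$ while $P_{\rm triv}\in\textup{Diff}_{e}^{1}=x\cdot x^{-1}\textup{Diff}_{e}^{1}$, multiplying through by $x$ shows that $P_{\rm triv}$ contributes nothing to the indicial family or to the normal operator of $D$ at $x=0$: both coincide with the corresponding invariants of $D_{\rm geo}$. The arguments of \cite[Theorem 1]{Albin-Jesse} and \cite[Proposition 5.11]{signature-package} establish, under the geometric Witt condition, that the indicial roots of $D_{\rm geo}$ avoid the critical strip governing the gap between $\dom_{\min}$ and $\dom_{\max}$, and that the associated normal operators are invertible on suitable weighted spaces. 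These conclusions depend only on indicial/normal data and on interior elliptic regularity, hence apply verbatim to $D_{\rm geo}+P_{\rm triv}$, yielding local equality of the minimal and maximal domains. A partition-of-unity patching then globalizes the conclusion, and essential self-adjointness follows from the symmetry of $D$ on $\Gamma_{c}(M,\cdot)$ combined with $\dom_{\min}(D)=\dom_{\max}(D)$.

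The main obstacle, such as it is, lies in verifying that the proofs in \cite{Albin-Jesse} and \cite{signature-package} are genuinely soft enough to survive both reductions, i.e.\ that they depend only on the indicial/normal data plus interior elliptic regularity and not on any finer structural feature of the unperturbed untwisted $D_{\rm geo}$ that could be disturbed by $P$ or by the twisting. Once this inspection is carried out, the patching across overlapping trivializing neighbourhoods is routine, since the change-of-trivialization matrices are locally constant unitaries and therefore commute with all differential operations on sections and preserve $L^{2}$-norms.
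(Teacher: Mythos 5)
Your proposal follows exactly the route sketched by the paper: the paper states the result as following "by inspection of the proofs in \cite[Theorem 1]{Albin-Jesse} and \cite[Proposition 5.11]{signature-package} and using the triviality of a flat bundle in a distinguished neighbourhood," with the attached footnote noting that the indicial and normal operators at the edge see neither the perturbation $P$ nor the twisting bundle. Your write-up simply elaborates that same reduction (local trivialization of $E$, invisibility of $P$ to the indicial/normal data, patching) in more detail than the paper chooses to record.
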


The geometric Witt condition for the Gauss-Bonnet and for the signature operator 
is obviously stronger than the cohomological Witt condition $H^{f/2}(F)=0$,
in the sense that it also requires absence of \emph{small} eigenvalues.
Similarly, condition \eqref{witt-spin} is obviously stronger  than 
the condition of invertibility of the vertical family,  in that it requires
also the vanishing of small eigenvalues.
If these small eigenvalues are present, the relevant geometric Dirac operators are not essentially self-adjoint; 
in  this case  one can impose algebraic boundary conditions as in  \cite{Ver-Mooers} and still obtain self-adjoint extensions
of our Dirac operators. However, the generalisation of the results of this paper requires
an extension of the microlocal heat kernel construction to these self-adjoint extensions. We leave all this 
to future investigations.

\subsection{A remark on iterated cone-edge singularities}\label{iterated-section}
A priori the links $F$ of an edge manifold are set to be smooth, compact and without boundary.
The underlying reason for that restriction is that our analytic arguments here require
a microlocal description of the heat kernel on such an edge manifold, as constructed in 
\cite{MazVer}. However, the analytic arguments of \cite{MazVer} directly apply 
to more general spaces $M$ (but still  particular cases of general stratified psedomanifolds): namely,  
the bottom stratum  is equal to a point, the link $(F, \kappa)$
is a stratified compact edge manifold with $D_F$ being essentially self-adjoint with discrete spectrum and the Riemannian metric 
in an open neighborhood $\U$ of the conical point is given by 
$g\restriction \U = dx^2 + x^2 \kappa$. Consequently, from here on, we include this case of 
depth two stratified spaces $M$ in our class of (admissible) edge manifolds.
For example, the signature operator on a  finite cone over an odd dimensional
Witt edge space of depth $\ell$ would satisfy this hypothesis.

\section{Microlocal analysis of the heat kernel on edges}\label{microlocal-section} 

\subsection{Heat kernel asymptotics for the Hodge Laplacian} 

In this subsection we consider the Friedrichs self adjoint extension of 
the Hodge Laplace operator on $(M,g)$ twisted with a flat connection on a flat Hermitian vector bundle $(E,\nabla, h)$, 
which we denote by $\Delta$ again, with domain $\dom (\Delta)$. 
The corresponding heat operator acts as an integral convolution operator 
on $u\in \dom (\Delta)$ 
\begin{equation}
e^{-t\Delta} u(p) = \int_M \HF \left( t, p,\widetilde{p} \right)
u(\widetilde{p}) \dv (\widetilde{p}),
\end{equation}
where the heat kernel $\HF$ is a distribution on $M^2_h=\R^+\times M_c^2$, 
taking values in $({}^{ie}\Lambda^* T^*M \otimes E)\boxtimes ({}^{ie}\Lambda^* T^*M \otimes E)^*$.
Equivalently we may view the heat kernel as a section of $({}^{ie}\Lambda^* T^*M \otimes E)\boxtimes ({}^{ie}\Lambda^* T^*M \otimes E)$
with the identification given by the metric. In this case the heat operator acts as follows
\begin{equation} \label{eqn:hk-on-functions}
e^{-t\Delta} u(p) = \int_M \left(\HF \left( t, p,\widetilde{p} \right),  
u(\widetilde{p})\right)_{g,h} \dv (\widetilde{p}).
\end{equation}
Its pointwise action on $u$ is given by taking the pointwise inner product defined by the Riemannian 
edge metric $g$ and the Hermitian bundle metric $h$.
Then $e^{-t\Delta} u$ solves the homogeneous heat problem
\begin{equation*}
(\partial_t + \Delta) \, \w(t,p)  = 0, \ \w(0,p)=u(p),
\end{equation*}
for any $u \in \dom (\Delta)$. We proceed with discussing the asymptotic properties
of $\HF$. Consider the local coordinates near the corner in $M^2_h$ given by $(t, (x,y,z), (\widetilde{x}, \wy, \widetilde{z}))$, 
where $(x,y,z)$ and $(\widetilde{x}, \wy, \widetilde{z})$ are two copies of coordinates on $M$ near the edge. 
The kernel $\HF(t, (x,y,z), (\wx,\wy,\wz))$ has non-uniform behaviour at the submanifolds
\begin{align*}
&A =\{ (t, (x,y,z), (\wx,\wy,\wz))\in M^2_h \mid t=0, \, x=\wx=0, \, y= \wy\}, \\
&D =\{ (t, p, \widetilde{p})\in M^2_h \mid t=0, \, p=\widetilde{p}\},
\end{align*}
which requires an appropriate blowup of the heat space $M^2_h$, 
such that the corresponding heat kernel lifts to a polyhomogeneous distribution 
in the sense of the following definition, which we cite from \cite{Mel:TAP} and \cite{MazVer}.

\begin{defn}\label{phg}
Let $\mathfrak{W}$ be a manifold with corners and $\{(H_i,\rho_i)\}_{i=1}^N$ an enumeration 
of its (embedded) boundary hypersurfaces with the corresponding defining functions. For any multi-index $s= (s_1,
\ldots, s_N)\in \C^N$ we write $\rho^s = \rho_1^{s_1} \ldots \rho_N^{s_N}$.  Denote by 
$\mathcal{V}_b(\mathfrak{W})$ the space of smooth vector fields on $\mathfrak{W}$ which lie
tangent to all boundary faces. A function $\w\in \rho^s L^\infty(\mathfrak{W})$ for 
some $s\in \C^N$ is said to be conormal, if $V_1 \ldots V_\ell \w \in \rho^s L^\infty(\mathfrak{W})$
for all $V_j \in \mathcal{V}_b(\mathfrak{W})$ and for every $\ell \geq 0$. An index set 
$E_i = \{(\gamma,p)\} \subset {\mathbb C} \times {\mathbb N_0}$ 
satisfies, by definition, the following hypotheses:

\begin{enumerate}
\item $\textup{Re}(\gamma)$ accumulates only at $+\infty$,
\item for each $\gamma$ there exists $P_{\gamma}\in \N_0$, such 
that $(\gamma,p)\in E_i$ for all $p < P_\gamma$,
\item if $(\gamma,p) \in E_i$, then $(\gamma+j,p') \in E_i$ for all $j \in {\mathbb N_0}$ and $0 \leq p' \leq p$. 
\end{enumerate}
An index family $E = (E_1, \ldots, E_N)$ is an $N$-tuple of index sets. 
Finally, we say that a conormal distribution $\w$ is polyhomogeneous on $\mathfrak{W}$ 
with index family $E$, we write $\w\in \mathscr{A}_{\textup{phg}}^E(\mathfrak{W})$, 
if $\w$ is conormal and if in addition, near each $H_i$ we have an asymptotic expansion
\[
\w \sim \sum_{(\gamma,p) \in E_i} a_{\gamma,p} \rho_i^{\gamma} (\log \rho_i)^p, \ 
\textup{as} \ \rho_i\to 0,
\]
where we require inductively that the coefficients $a_{\gamma,p}$ are 
conormal on $H_i$, i.e. polyhomogeneous with index $E_j$
at any other boundary face $H_j$ intersecting $H_i$. 
\end{defn}

Blowing up the submanifolds $A$ and $D$ is a geometric procedure of introducing polar coordinates on $M^2_h$, 
around the submanifolds together with the minimal differential structure which turns polar coordinates into smooth
functions on the blowup. A detailed account on the blowup procedure is given e.g. in \cite{Mel:TAP}. See also  \cite{Gr}. 
In this paper, the index sets are always subsets of $\R \times \N_0$, i.e. the exponents 
$\gamma$ are real.\medskip

First we blow up parabolically 
(i.e. we treat $\sqrt{t}$ as a smooth variable) the submanifold $A$.
This defines $[M^2_h, A]$ as the disjoint union of
$M^2_h\backslash A$ with the interior spherical normal bundle of $A$ in $M^2_h$,
equipped with the minimal differential structure 
such that smooth functions in the interior of $M^2_h$ and polar coordinates 
on $M^2_h$ around $A$ are smooth. The interior spherical normal bundle of $A$ defines a new boundary 
hypersurface $-$ the front face ff in addition to the previous boundary faces 
$\{x=0\}, \{\wx=0\}$ and $\{t=0\}$, which lift to rf (the right face), lf (the left face) and 
tf (the temporal face), respectively.  \medskip

The actual heat-space $\mathscr{M}^2_h$ is obtained by a second parabolic blowup of  
$[M^2_h, A]$ along the diagonal $D$, lifted to a submanifold of $[M^2_h, A]$. 
We proceed as before by cutting out the lift of $D$ and replacing it with its spherical 
normal bundle, which introduces a new boundary face $-$ the temporal diagonal td. 
The heat space $\mathscr{M}^2_h$ is illustrated in Figure 1. \medskip

\begin{figure}[h]
\begin{center}
\begin{tikzpicture}
\draw (0,0.7) -- (0,2);
\draw[dotted] (-0.1,0.7) -- (-0.1, 2.2);
\node at (-0.4,2) {t};

\draw(-0.7,-0.5) -- (-2,-1);
\draw[dotted] (-0.69,-0.38) -- (-2.05, -0.9);
\node at (-2.05, -0.6) {$x$};

\draw (0.7,-0.5) -- (2,-1);
\draw[dotted] (0.69,-0.38) -- (2.05, -0.9);
\node at (2.05, -0.6) {$\wx$};

\draw (0,0.7) .. controls (-0.5,0.6) and (-0.7,0) .. (-0.7,-0.5);
\draw (0,0.7) .. controls (0.5,0.6) and (0.7,0) .. (0.7,-0.5);
\draw (-0.7,-0.5) .. controls (-0.5,-0.6) and (-0.4,-0.7) .. (-0.3,-0.7);
\draw (0.7,-0.5) .. controls (0.5,-0.6) and (0.4,-0.7) .. (0.3,-0.7);

\draw (-0.3,-0.7) .. controls (-0.3,-0.3) and (0.3,-0.3) .. (0.3,-0.7);
\draw (-0.3,-1.4) .. controls (-0.3,-1) and (0.3,-1) .. (0.3,-1.4);

\draw (0.3,-0.7) -- (0.3,-1.4);
\draw (-0.3,-0.7) -- (-0.3,-1.4);

\node at (1.2,0.7) {\large{rf}};
\node at (-1.2,0.7) {\large{lf}};
\node at (1.1, -1.2) {\large{tf}};
\node at (-1.1, -1.2) {\large{tf}};
\node at (0, -1.7) {\large{td}};
\node at (0,0.1) {\large{ff}};
\end{tikzpicture}
\end{center}
\label{heat-incomplete}
\caption{The heat-space $\mathscr{M}^2_h$.}
\end{figure}

We now describe projective coordinates in a neighborhood of the front face in $\mathscr{M}^2_h$, which are used often 
as a convenient replacement for the polar coordinates. The drawback is that projective coordinates are 
not globally defined over the entire front face. Near the top corner of the front face ff, projective coordinates are given by
\begin{align}\label{top-coord}
\rho=\sqrt{t}, \  \xi=\frac{x}{\rho}, \ \widetilde{\xi}=\frac{\wx}{\rho}, \ u=\frac{y-\wy}{\rho}, \ z, \ \wy, \ \wz.
\end{align}
With respect to these coordinates, $\rho, \xi, \widetilde{\xi}$ are in fact the defining 
functions of the boundary faces ff, rf and lf respectively. 
For the bottom right corner of the front face, projective coordinates are given by
\begin{align}\label{right-coord}
\tau=\frac{t}{\wx^2}, \ s=\frac{x}{\wx}, \ u=\frac{y-\wy}{\wx}, \ z, \ \wx, \ \wy, \ \widetilde{z},
\end{align}
where in these coordinates $\tau, s, \widetilde{x}$ are
the defining functions of tf, rf and ff respectively. 
For the bottom left corner of the front face,
projective coordinates are obtained by interchanging 
the roles of $x$ and $\widetilde{x}$. Projective coordinates 
on $\mathscr{M}^2_h$ near the temporal diagonal are given by 
\begin{align}\label{d-coord}
\eta=\frac{\sqrt{t}}{\wx}, \ S =\frac{(x-\wx)}{\sqrt{t}}, \ 
U= \frac{y-\wy}{\sqrt{t}}, \ Z =\frac{\wx (z-\wz)}{\sqrt{t}}, \  \wx, \ 
\wy, \ \widetilde{z}.
\end{align}
In these coordinates, tf is defined as the limit $|(S, U, Z)|\to \infty$, where
we use the Euclidean norm on $\R^m$; 
ff and td are defined by $\widetilde{x}, \eta$, respectively. 
The blow-down map $\beta: \mathscr{M}^2_h\to M^2_h$ is in 
local coordinates simply the coordinate change back to 
$(t, (x,y, z), (\widetilde{x},\wy, \widetilde{z}))$. \medskip

The restriction of $\beta$ to $\td$ is a fibration, with each fibre being a closed `parabolic' hemisphere $S^m_+$ 
and base given by the lifted diagonal of $(M_c)^2$, which is diffeomorphic to a copy of $M_c$. Similarly, the restriction 
of $\beta$ to $\ff$ is a fibration over the diagonal of $\del M_c \times \del M_c$. \medskip

In the statement below we assume that the $2$-tensor
$\kappa$ restricts to a smooth family of isospectral Riemannian metrics
on the fibres $F$, i.e. that the vertical operators $D_F$ are isospectral.
The second named author has worked under this condition in 
\cite{MazVer} jointly with Mazzeo. This condition ensures that the 
heat kernel of $\Delta$ is polyhomogeneous
in the sense of Definition \ref{phg} on the heat space blowup $\mathscr{M}^2_h$,
with exponents in the asymptotic expansion at the right and left boundary face given in terms of
the spectrum of $D_F$. \cite{MazVer} asserts the following result.

\begin{thm}\label{heat-expansion}
Let $(M,g)$ be an incomplete Riemannian manifold with an 
edge singularity and an admissible edge metric $g$ with the $2$-tensor
$\kappa$ restricting to a smooth family of isospectral Riemannian metrics
on the fibres $F$. Consider a flat Hermitian vector bundle over $M$.
Then the lift $\beta^*\HF$ of the Friedrichs heat kernel for the Hodge Laplacian
is a polyhomogeneous function on $\mathscr{M}^2_h$ 
taking values in $({}^{ie}\Lambda^* T^*M \otimes E)\boxtimes ({}^{ie}\Lambda^* T^*M \otimes E)$
with the index set $(-m+\N_0, 0)$ at ff, $(-m+\N_0, 0)$ at td, vanishing to infinite order at tf, and with some discrete 
index set at rf and lf.  
\end{thm}

To be precise, \cite{MazVer} establishes the heat kernel asymptotics
under certain rescaling transformation, as employed by Br\"uning and Seeley in \cite{BruSee:ITF}.
In view of the transformation rules in \cite[(3.13)]{MazVer}
it is straightforward to derive the index set of the heat kernel at ff without using the rescaling. \medskip

We point out that the heat kernel construction extends to squares of 
allowable perturbations of Gauss-Bonnet or signature operators, in the sense of Definition \ref{allowable}.

\begin{remark}\label{not-isospectral}
In case of $\kappa$ not being isospectral, the heat kernel of $\Delta$ 
has a rather complicated behaviour at lf and rf, but still admits a 
polyhomogeneous expansion at the front face ff and the temporal diagonal td of same order.
Therefore the isospectrality assumption will be lifted once we restrict the heat kernel 
to the diagonal and consider its trace asymptotics. This is why the isospectrality assumption does not 
appear in the main theorems.
\end{remark}

\subsection{Heat kernel asymptotics for the spin Laplacian} \label{spin-section}

Assume $M$ is spin and consider the spin bundle $S$ on $M$. 
We denote the corresponding spin Dirac operator by $D$. Recall that $D$ is essentially
self-adjoint under the geometric Witt assumption \ref{Witt}. Assume 
for the moment that the higher order term $h$ in the Riemannian metric
$g$ on $M$ is zero. Then the explicit structure of $D$ is studied
in \cite{Albin-Jesse}. \medskip

Since by the admissibility assumption, $\phi: (\partial M, \kappa + \phi^*g^B) 
\to (B, g^B)$ is a Riemannian submersion, the tangent bundle $T_p\partial M$ splits into vertical and horizontal subspaces  
$T^V_p \partial M \oplus T^H_p \partial M$, where the vertical subspace $T^V_p\partial M$ is the tangent space to the fibre of 
$\phi$ through $p$, the horizontal subspace $T^H_p \partial M \cong \phi^* TB$ is the pull back of the space of tangent vectors of the base, lifted to $Y$, and the tensor $\kappa$ restricted to $T^H_p \partial M$ vanishes. 
The splitting of the tangent bundle over $\partial M$
yields a splitting of the tangent bundle of $M$ near the edge
\begin{align}\label{split}
{}^{ie} TM \cong \langle \partial_x \rangle \oplus x^{-1}T^V \partial M \oplus \phi^* TB.
\end{align}
Since $\kappa$ restricted to $\phi^* TB$ vanishes, there exists a local orthonormal frame
of vector fields $\{\partial_x, x^{-1}V_\A, U_\beta\}$, which respects the splitting
\eqref{split}. If $\nabla$ denotes the block-diagonal covariant derivative on the spin bundle, 
as used in \cite[Lemma 2.2]{Albin-Jesse} the Dirac operator $D$ decomposes 
locally as 
\begin{align}
D = c(\partial_x) \partial_x + \frac{f}{2x} c(\partial_x) + \frac{1}{x} \sum_{\A=1}^f
c(x^{-1}V_\A) \nabla_{V_\A} + \sum_{\beta = 1}^b c(U_\beta) \nabla_{U_\beta} + W,
\end{align}
where $c$ denotes the Clifford multiplication and $W$ a higher order term. 
Allowing for non-zero higher term $h$ in the 
Riemannian metric $g$ just introduces higher order terms in the formula for $D$. 
From here it is clear that the normal operator $N(x^2D)_{y_0}$ is $s^2$ times 
the spin Dirac operator on the model edge $\R^+_s \times F_{y_0} \times \R^b$ 
with the model edge metric $g_{\textup{ie}} = ds^2 + s^2 \kappa_{y_0} + g^B_{y_0}$, 
in a similar way as in the Hodge de Rham setting. The notation of $s$ as a defining
function of $\R^+$ alludes to the fact that $N(x^2D)_{y_0}$ acts on the fibre front face
at $y_0 \in B$, with e.g. local coordinates \eqref{right-coord}.
\medskip

We can now construct the heat kernel $H$ of the spin Laplacian $D^2$ as a polyhomogeneous function on 
the blowup space $\mathscr{M}^2_h$ taking values in $S\boxtimes S^*$ along the 
lines of \cite{MazVer}. The normal operator $N(H)_{y_0}$, which is the leading order term 
of the heat kernel at the $y_0$-fibre of the front face, is obtained as a product of the heat kernel $H_{\mathscr{C}(F)}$
on the model cone $(\R^+_s \times F_{y_0}, ds^2 + s^2 \kappa_{y_0})$ with the heat kernel 
on the euclidean factor $(\R^b, g^B_{y_0})$. 
\medskip

The spin Laplacian $\Delta_{\mathscr{C}(F)}$ and the spin heat kernel $H_{\mathscr{C}(F)}$
on the model cone has been identified explicitly by Chou \cite[(2.6), (2.8)]{Chou}. More precisely,
\cite[(2.19)]{Chou} asserts that any eigenspinor of $D$ is of the form $\w_\mu (\phi_\mu \pm c(\partial_s) \phi_\mu)$ with 
the scalar part $\w_\mu$ given in terms of Bessel functions of first kind, and $\phi_\mu$ being a $\mu$-eigenspinor 
of the Dirac operator $D_F = \sum c(V_\A) \nabla_{V_\A}$ on the fibres $(F_{y_0},\kappa_{y_0})$. We write $S^\pm_\mu := 
(\phi_\mu \pm c(\partial_s) \phi_\mu)$. The action of the spin Laplacian on spinors of the form
$\w S^\pm_\mu$ with $\w\in C^\infty_0(\R^+)$ is then given, cf. \cite[Prop. 2.5, (2.6), (2.7)]{Chou},
by a scalar operator
\begin{align}
\Delta_{\mathscr{C}(F)} \left(\w S^\pm_\mu \right)
= \left[\left( -\partial^2_s - \frac{f}{s}\partial_s + \frac{1}{s^2}
\left(\mu^2\mp \mu - \frac{f^2-2f}{4}\right)\right) \w\right]S^\pm_\mu.
\end{align}
The heat kernel of this scalar action is well-known and expressed in 
terms of modified Bessel functions in \cite[Proposition 2.3.9]{Les:OOF}. 
Correspondingly, the spin heat kernel $H_{\mathscr{C}(F)}$ is written out 
in \cite[(5.9)]{Chou} with $\nu^\pm(\mu):= |2\mu\mp 1|/2$ as
\begin{align}\label{Bessel-heat-spin}
H_{\mathscr{C}(F)} = \sum_\mu H^\pm_\mu  S^\pm_\mu \otimes S^\pm_\mu,
\ H^\pm_\mu(t, s,\widetilde{s}) = \frac{1}{2t}(s\widetilde{s})^{(1-f)/2}I_{\nu^\pm(\mu)}
\left(\frac{s\widetilde{s}}{2t}\right)e^{-\frac{s^2+\widetilde{s}^2}{4t}}. 
\end{align}
Hence the normal operator $N(H)_{y_0}$ can be set up exactly 
as in \cite[(3.10)]{MazVer}. In order to construct the exact heat kernel, 
the initial parametrix $N(H)_{y_0}$ has to be corrected, which involves composition 
of Schwartz kernels on $\mathscr{M}^2_h$ by \cite[Theorem 5.3]{MazVer}. 
Following the heat kernel construction as in \cite{MazVer} verbatim we arrive 
under the condition of isospectrality for $\kappa$ at the following result.

\begin{thm}\label{heat-expansion-spin}
Let $(M,g)$ be an incomplete Riemannian spin manifold with an 
edge singularity and an admissible edge metric $g$ with the $2$-tensor
$\kappa$ restricting to a smooth family of isospectral Riemannian metrics
on the fibres $F$. Assume the geometric Witt condition. 
Then the spin Dirac operator $D$ is essentially self-adjoint and the
lift $\beta^*H$ of the heat kernel for the spin Laplacian $D^2$ is a polyhomogeneous function on $\mathscr{M}^2_h$ 
taking values in $S \boxtimes S$ with the index set $(-m+\N_0, 0)$ at ff, $(-m+\N_0, 0)$ at td, 
vanishing to infinite order at tf, and with some discrete 
index set at rf and lf.  
\end{thm}

We point out that the heat kernel construction extends to squares of 
allowable Dirac-type operators satisfying the Witt condition in the sense of Definition \ref{allowable}.
Moreover, in case of $\kappa$ not being isospectral, the heat kernel for $D^2$ still admits a 
polyhomogeneous expansion at the front face ff and the temporal diagonal td of same order.
Therefore the isospectrality assumption is lifted once we restrict $H$ and $DH$
to the diagonal and consider its trace asymptotics, and does not appear in the main theorems anymore.

\subsection{On locality of heat kernel asymptotics along the edge} \label{locality-section}

The heat kernel construction for the Hodge and spin Laplacians and their higher order 
(allowable) perturbations carries over verbatim to the case where the operators are 
twisted with a flat vector bundle $E$. We consider flat vector bundles induced by  
unitary representations $\rho: \pi_1(\overline{M}) \to U(n)$, where $\overline{M}$ is the 
smoothly stratified space; notice that, therefore, the flat vector bundles here are defined on all
of $\overline{M}$ and not only on the regular part $M$. \medskip

Let $\overline{\U}$ be a tubular neighbourhood of a singular stratum $B$; 
from the axioms of  Thom-Mather we know that $\overline{\U}$ is a fiber bundle over $B$, 
$\phi: \overline{\U} \to B$, with fiber equal to $\overline{\mathscr{C}(F)}$.
As already remarked, since the fibres $\overline{\mathscr{C}(F)}$ are contractible , $E$ is trivial when restricted to a distinguished
neighbourhood $\mathcal{V}\times \overline{\mathscr{C}(F)}$ of a point $y_0\in \mathcal{V}\subset B$, 
with $\mathcal{V}$ a contractible open neighbourhood of $y_0$ in $B$. 
In particular, $E$ is trivial over each link $F$, but not necessarily along the 
edge singularity $B$. \medskip

We seek to clarify here to what extent the asymptotic behaviour of the 
heat kernel $H$ near the edge depends on the twisting bundle $E$. To this end, 
we consider two such flat vector bundles $E_1$ and $E_2$ of rank $r$ and fix after an eventual refinement a trivializing atlas 
for both $E_1$ and $E_2$, for which the transition functions are locally constant. We can take such a trivializing
atlas made of distinguished neighbourhoods of the form $\mathcal{V}\times \overline{\mathscr{C}(F)}$
 near $B$. Each such neighborhood comes with a local bundle isomorphism 
 \begin{align}
 \phi: E_1 \restriction \mathcal{V}\times \overline{\mathscr{C}(F)} \to 
 E_2 \restriction \mathcal{V}\times \overline{\mathscr{C}(F)}.
 \end{align}
Each $E_j, j=1,2,$ carries a flat connection, restricted to $M$. 
The corresponding connection one-forms associated to this atlas (intersected with M)
are zero. 
\medskip

Consider the front face of the heat space $\mathscr{M}^2_h$, which is a fibration over 
$\partial M$ and as such can also be viewed as a fibration over $B$. Fix a base point $y_0\in B$.
The intersection of of the distinguished neighborhood $\mathcal{V}\times \overline{\mathscr{C}(F)}$
of $y_0$ with the open smooth stratum $M$ is given by $\mathcal{V}\times \mathscr{C}(F)$.
We denote the Hodge or the spin Laplacian twisted with $E_j$ by $\Delta_j$ for $j=1,2$. We denote their 
heat kernels by the corresponding lower index $j$ as well. Using the vanishing of the 
connection one forms of $E_j$ in the trivializing atlas we have chosen, we conclude
that over $\mathcal{V}\times \mathscr{C}(F)$ the operators are related by 
 \begin{align}
\Delta_1 = \phi^{-1} \circ \Delta_2 \circ \phi.
 \end{align}
The lifts of $\Delta_1$ and $\Delta_2$ to $\mathscr{M}^2_h$
are again related via conjugation by the local bundle isomorphism $\phi$
over $\mathcal{V}\times \mathscr{C}(F)$. Hence, for $\wy \in \mathcal{V}$ the normal operators
$N(H_1)_{\wy}$ and $N(H_2)_{\wy}$, which are set up exactly as in \cite[(3.10)]{MazVer}, 
are related via conjugation by $\phi$ as well. The initial parametrices $H_0(\Delta_j)$ are obtained
by extending $N(H_j)_{\wy}$ smoothly off the front face to the interior of $\mathscr{M}^2_h$ for any $j=1,2$. 
By construction, the front face asymptotics of $H_0(\Delta_j)(\cdot, \wy)$ with $\wy \in \mathcal{V}$ is, 
up to conjugation, independent of $j$, i.e. writing $\rho_\ff$ for the defining function of the front face in 
$\mathscr{M}^2_h$ we have for any $N \in \N$ 
\begin{align}\label{conjugation}
\left(H_0(\Delta_1) -  \phi^{-1} \circ H_0(\Delta_2) \circ \phi\right) (\cdot, \wy) = O(\rho^N_\ff), 
\ \rho_\ff \to 0.
\end{align} 
The initial parametrix $H_0(\Delta_j)$ solves the heat equation only up to an error 
$R(\Delta_j):=(\partial_t + \Delta_j) H_0(\Delta_j)$. 
For $\wy \in \mathcal{V}$ the front face asymptotics of $R(\Delta_j)(\cdot, \wy)$ 
is again as in \eqref{conjugation} independent of $j$ up to conjugation by $\phi$, 
since the same holds for $H_0(\cdot, \wy)$ and $\Delta$ in the open neighborhood $\mathcal{V}\times \mathscr{C}(F)$. 
Now the parametrix $H_0(\Delta_j)$ is corrected to define an exact solution to the heat equation 
by adding convolutions $H_0(\Delta_j) * R(\Delta_j)^\ell$ for $\ell \in \N$. These convolutions involve integrations over 
the base $B$ and hence a priori might depend on the global structure of $E_1$ and $E_2$. \medskip

In order to address this issue, consider a kernel $P$ that lifts to a polyhomogeneous 
function $\beta^*P$ on $\mathscr{M}^2_h$. Consider any $y \in B$ with $|y-\wy|\geq \delta$
bounded away from zero. We write $u=(y-\wy)/\rho_\ff$ in correspondence with the projective coordinates
in \eqref{top-coord} and \eqref{right-coord}. Assume that $P$ admits a factor of the form 
$\exp(-\frac{1}{2}|u|^2)$. Then for any $N\in \N$ we may estimate
\begin{equation}\label{P-ff}
\begin{split}
\rho_\ff^{-N} \| P(\cdot, y, \wy, \rho_\ff) \|_\infty &= 
|u|^N \, \| P(\cdot, y, \wy, \rho_\ff) \|_\infty \, |y-\wy|^{-N} \\
&\leq |u|^N \, \| P(\cdot, y, \wy, \rho_\ff) \|_\infty \, \delta^{-N}. 
\end{split}
\end{equation} 
and the right hand side is bounded by a constant $C_N$ thanks to the factor 
$\exp(-\frac{1}{2}|u|^2)$ in $P$. Consequently, $\beta^*P$ is vanishing to infinite order at the front face, when evaluated
at two points $y$ and $\wy$ on the edge $B$ with their distance bounded away from zero.
\medskip

Consider now the convolutions $H_0(\Delta_j) * R(\Delta_j)^\ell$, whose definition involves integrations of the kernels
over the base $B$. Since the initial parametrix $H_0(\Delta_j)$ admits a factor of the form $\exp(-\frac{1}{2}|u|^2)$
by definition, this factor is present in the error term $R(\Delta_j)$ and the related convolutions $H_0(\Delta_j) * R(\Delta_j)^\ell$
as well. Then \eqref{P-ff} shows that the contributions to $H_0(\Delta_j) * R(\Delta_j)^\ell (\cdot, \wy)$ 
coming from integration on $B$ away from a $\wy$ are vanishing to infinite order at ff. 
Consequently, the front face asymptotics of $H_0(\Delta_j) * R(\Delta_j)^\ell (\cdot, \wy)$ depends only on 
the asymptotics of $H_0(\Delta_j)(\cdot, y)$ and $R(\Delta_j)(\cdot, y)$ for $y$ sufficiently close to $\wy \in \mathcal{V}$, 
and hence 
\begin{equation}\label{conjugation2}
\left(H(\Delta_1) -  \phi^{-1} \circ H(\Delta_2) \circ \phi\right) (\cdot, \wy) = O(\rho^N_\ff), 
\ \rho_\ff \to 0.
\end{equation} 
This proves that the front face asymptotics of
the lifted heat kernel $\beta^*H(\cdot, \wy)$ is up to conjugation of the coefficients,
independent of the choice of a flat vector bundle of fixed rank, for $\wy$ in an open neighborhood 
of $y_0$. A similar argument applies for the expansion of 
the heat kernel near the temporal diagonal. If we restrict the heat kernel to the diagonal
and take its pointwise trace, then dependence of the expansions on the conjugation by a bundle morphism
cancels and we arrive at the following theorem.

\begin{thm}\label{coefficients}
Let $E$ be a flat vector bundle induced by a unitary representation of the fundamental
group of $\overline{M}$. Consider the heat kernel $H$ of the square of an allowable perturbation of a
geometric Dirac operator $D$, twisted by the flat vector bundle $E$. Then the asymptotic expansion
of the pointwise traces $\tr H$ and $\tr D H$ at ff and td is up to the bundle rank independent of 
the particular choice of $E$. 
\end{thm}

\section{The even and odd heat calculus on edge spaces}\label{even-odd-section}

Admissibility of $g$, more precisely evenness of the higher order term $h$ in its 
behavior near the edge, in fact yields a more refined asymptotic information 
beyond Theorem \ref{heat-expansion}. This refinement is obtained by singling out
even and odd polyhomogeneous functions on the blowup space $\mathscr{M}^2_h$. 
The even/odd classification is slightly different for the Hodge Laplacian and for the 
spin Laplace operator and hence done here separately. As before, we assume in both 
cases the geometric Witt condition, so that the corresponding Dirac operators are essentially
self-adjoint and the heat kernels for their squares can be studied as polyhomogeneous
functions on the heat space $\mathscr{M}^2_h$.

\subsection{The even and odd heat calculus for the Hodge Laplacian}

We begin with the explicit definition of a heat calculus.

\begin{defn}\label{heat-calculus}  
Let $\calE = (E_{\lf}, E_{\rf})$ be an index family for the two side faces of $\mathscr{M}^2_h$. We define $\Psi^{\ell,p,\calE}_{\eh}(M)$ 
to be the space of all operators $P$ with Schwartz kernels $K_P$ which are pushforwards from polyhomogeneous functions 
$\beta^*K_P$ on $\mathscr{M}^2_h$, taking values in $({}^{ie}\Lambda^* T^*M \otimes E)\boxtimes ({}^{ie}\Lambda^* T^*M \otimes E)$
with index family $(-m-2+\ell +\N_0,0)$ at $\ff$, $(-m  + p + \N_0, 0)$ 
at $\td$, $\varnothing$ at $\tf$ and $\calE$ for the two side faces of $\mathscr{M}^2_h$. The subscript $\eh$ in $\Psi^{\ell,p,\calE}_{\eh}(M)$ 
stands for "edge-heat".
\end{defn}

The naming \emph{calculus} is due to composition formulae, which hold under 
certain restrictions on the index sets at the side faces
\[
\Psi^{\ell,p,\calE}_{\eh}(M) \circ \Psi^{\ell',p',\calE'}_{\eh}(M) \subset \Psi^{\ell+\ell', p + p', \calE''}_{\eh}(M),
\]
where the index set $\calE''$ is defined in terms of the index sets $(\calE, \calE')$ 
as in \cite[Theorem 5.3]{MazVer}. \medskip
 
We now introduce the definition of even and odd subcalculi, cf. \cite{MazVer}.
Note that we do not split $\Psi^{*}_{\eh}(M)$ into even and odd parts, but rather
define subspaces of the calculus.
Consider in an open neighborhood $\U$ of each edge local coordinates $(x,y,z)$ and 
the local differential forms $\{dx, x dz_1,.., xdz_f, dy_1, .., dy_b\}$
which generate the exterior algebra ${}^{ie}\Lambda^* T^*M$ near the edge.
We assign odd parity to the forms $\{dx, x dz_1,.., xdz_f\}$ and even parity to the 
forms $\{dy_1, .., dy_b\}$. Exterior product of two even differential forms is set to be even again.
Exterior product of two odd differential forms is set to be even as well. Exterior product of 
an odd and an even differential form is set to be odd. This decomposes the exterior algebra 
${}^{ie}\Lambda^* T^*M$ near the edge into an even $\Lambda_+$ and an odd subbundle $\Lambda_-$
\footnote{These should not be confused with the self-dual / anti self-dual decomposition of the signature operator, 
for which a similar notation with $\pm$ as upper script is usually employed. }
\begin{align}\label{lambda-pm}
{}^{ie}\Lambda^* T^*M \restriction \U = \Lambda_+ \oplus \Lambda_-.
\end{align}
Evenness and oddness of differential forms, is well-defined within a fixed choice of 
special coordinates, as introduced in \S \ref{geometry-section}. Similarly, evenness
of the admissible Riemannian metric $g$ also depends on the choice of special
coordinates, which we fix henceforth. \medskip

Consider the space $C^\infty_+(\U)$ of even functions on $\U_c$, i.e. functions 
that are smooth in $\U_c$ with a Taylor expansion at $x=0$ involving only even powers of $x$.
Consider the space $C^\infty_-(\U)$ of odd functions on $\U_c$, i.e. functions 
that are smooth in $\U_c$ with a Taylor expansion at $x=0$ involving only odd powers of $x$.
Clearly, $C^\infty_-(\U)=xC^\infty_+(\U)$.
\medskip

The assignment of the even/odd parity is not ad hoc and in no way arbitrary.
Recall that the evenness condition on the Riemannian edge metric asserts that the 
coefficients of $g$ are even functions of $x$, except for the coefficients corresponding to the
$dx$-cross terms which are required to be odd in $x$. One may check explicitly that the same
continues to hold for the inverse Riemannian metric on differential forms. Consequently, 
$g(dx,dx), g(dy_i, dy_j), g(xdz_k, xdz_\ell)$ as well as $g(dx,xdz_k)$ are always 
even functions of $x$.  Moreover, $g(dx,dy_j)$ as well as $g(dy_i,xdz_j)$ are always 
odd functions of $x$.
\medskip

Extending these relations to pairing of exterior products of cotangent vectors, we find
for the pointwise traces 
\begin{equation}\label{trace-point}
\begin{split}
&\textup{tr} (\Lambda_\pm \boxtimes \Lambda_\pm) = g (\Lambda_\pm, \Lambda_\pm) \subset C^\infty_+(\U), \\
&\textup{tr} (\Lambda_\pm \boxtimes \Lambda_\mp) = g (\Lambda_\pm, \Lambda_\mp) \subset C^\infty_-(\U).
\end{split}
\end{equation}
where the equalities in \eqref{trace-point} are justified by identifying $\Lambda_\pm \boxtimes \Lambda_\pm$ and
$\Lambda_\pm \boxtimes \Lambda^*_\pm$ through the metric and using that $\Lambda_\pm \boxtimes \Lambda^*_\pm |_{\Delta}=
\operatorname{Hom}(\Lambda_\pm, \Lambda_\pm)$.
We can now define the even/odd calculus by imposing conditions on the asymptotic expansions of elements in $\Psi_{\eh}^*$ at
the temporal diagonal $\td$ and the front face $\ff$. 

\begin{defn}\label{def-even}
Consider $P \in \Psi^{*}_{\eh}(M)$ with the Schwartz kernel 
$K_P$. Then $P$ is said to be even if the following conditions hold. 

\begin{enumerate}
\item In an open neighborhood of the temporal diagonal $\td$, in terms of the projective coordinates \eqref{d-coord}, 
valid uniformly up to the front face $\ff$
\footnote{The grading ${}^{ie}\Lambda^* T^*M \restriction \U = \Lambda_+ \oplus \Lambda_-$ is ignored 
when we specify the asymptotics near td.}
\begin{align*}
&\beta^*K_P \sim \eta^{-m-2} \sum_{n=0}^\infty \kappa^{\td}_n(S, U, Z; \wx, \wy, \wz) \eta^{n}, 
\\ &\textup{with} \ \kappa^{\td}_{n}(-S, -U, -Z; \wx, \wy, \wz) = (-1)^{n} \kappa^{\td}_{n}(S, U, Z; \wx, \wy, \wz)
\ \textup{for all} \ n.
\end{align*}
\item In an open neighborhood of the front face $\ff$ we write 
$K_P$ with respect to the decomposition 
${}^{ie}\Lambda^* T^*M \restriction \U = \Lambda_+ \oplus \Lambda_-$ as a matrix
\begin{equation*}
K_P = \left( \begin{split} K_{++} & \quad K_{+-} \\ K_{-+} & \quad K_{--}
\end{split}\right).
\end{equation*}
In terms of the projective coordinates \eqref{top-coord} (similarly in the 
projective coordinates \eqref{right-coord})
valid in the interior of ff
\begin{equation*}
\beta^*K_P \equiv 
\beta^*  \left( \begin{split} K_{++} & \quad K_{+-} \\ K_{-+} & \quad K_{--}
\end{split}\right) \sim \rho^{-m-2} \sum\limits_{n=0}^{\infty} 
\left( \begin{split} \kappa^{\ff}_{n,++} & \quad \kappa^{\ff}_{n,+-} 
\\ \kappa^{\ff}_{n,-+} & \quad \kappa^{\ff}_{n,--}
\end{split}\right)
(\xi, \widetilde{\xi}, u, z, \widetilde{y}, \widetilde{z}) \rho^{n}, 
\end{equation*}
where the coefficients satisfy the following parity conditions
\begin{align*}
& \kappa^{\ff}_{n, \pm , \pm}(\xi, \widetilde{\xi}, -u, z, \widetilde{y}, \widetilde{z})=
(-1)^{n}\kappa^{\ff}_{n, \pm , \pm}(\xi, \widetilde{\xi}, 
u, z, \widetilde{y}, \widetilde{z}) \ \textup{for all} \ n, \\
&\kappa^{\ff}_{n, \pm , \mp}(\xi, \widetilde{\xi}, -u, z, \widetilde{y}, \widetilde{z})=
(-1)^{n+1}\kappa^{\ff}_{n, \pm , \mp}(\xi, \widetilde{\xi}, 
u, z, \widetilde{y}, \widetilde{z}) \ \textup{for all} \ n.
\end{align*}
\item Near the intersection $\td \cap \ff$,
due to product polyhomogeneity in terms of projective coordinates \eqref{d-coord}, 
valid in an open neighborhood of $\td \cap \ff$
\begin{equation*}
\beta^*  \left( \begin{split} K_{++} & \quad K_{+-} \\ K_{-+} & \quad K_{--}
\end{split}\right) \sim (\wx \eta)^{-m-2} \sum\limits_{k=0}^{\infty}  \sum\limits_{\ell=0}^{\infty} 
\left( \begin{split} \kappa^{k\ell}_{++} & \quad \kappa^{k\ell}_{+-} 
\\ \kappa^{k\ell}_{-+} & \quad \kappa^{k\ell}_{--}
\end{split}\right) (-S, -U, -Z; \, \wy, \wz)  \, \wx^k \eta^\ell, 
\end{equation*}
where the coefficients satisfy the following parity conditions.
The following parity with respect to the coordinates $(S,U,Z)$ holds
\begin{align*}
& \kappa^{k\ell}_{\pm , \pm}(-S, -U, -Z) =
(-1)^{\ell}\kappa^{k\ell}_{\pm , \pm}(S, U, Z), \\
& \kappa^{k\ell}_{\pm , \mp}(-S, -U, -Z) =
(-1)^{\ell}\kappa^{k\ell}_{\pm , \mp}(S, U, Z).
\end{align*}
Moreover, the following parity with respect to the coordinate $U$ holds
\begin{align*}
& \kappa^{k\ell}_{\pm , \pm}(S, -U, Z) =
(-1)^{k}\kappa^{k\ell}_{\pm , \pm}(S, U, Z),  \\
& \kappa^{k\ell}_{\pm , \mp}(S, -U, Z) =
(-1)^{k+1}\kappa^{k\ell}_{\pm , \mp}(S, U, Z).
\end{align*}
\end{enumerate}

We denote the set of all even operators by $\Psi^{*}_{+}(M)$.
We define the odd subcalculus $\Psi^{*}_{-}(M)$ by exchanging the parities of
the coefficients $\kappa^{\ff}_{n, \pm , \pm}$ and 
$\kappa^{\ff}_{n, \pm , \mp}$. The parity at td remains unchanged. 
\end{defn}

Similar to \cite[Proposition 3.6]{MazVer}, the explicit the composition of even and odd operators
obey the following rule
\begin{equation}\label{even-odd-rule}
\begin{split}
&\Psi^{*}_{\pm}(M) \circ \Psi^{*}_{\pm}(M) \subset \Psi^{*}_{+}(M), 
\\ &\Psi^{*}_{\pm}(M) \circ \Psi^{*}_{\mp}(M) \subset \Psi^{*}_{-}(M).
\end{split}
\end{equation}
Note that while \cite[Proposition 3.6]{MazVer} only addresses explicitly the composition rule for
even operators, \eqref{even-odd-rule} still follows by the same arguments. \medskip

Note that the parity of coefficients in the definition of the even subcalculus is defined
precisely so that after restricting to the diagonal (the diagonal in $\mathscr{M}^2_h$
is again a manifold with corners, introduced in detail in \S \ref{trace-section} with $\eta$ 
still being the defining function of td, and $\rho$ the defining function of ff) and taking pointwise trace, one finds 
in view of \eqref{trace-point} for any $P \in \Psi^{*}_{+}(M)$ that $\textup{tr} \, \beta^* K_P$
is polyhomogeneous with
\begin{equation}\label{exp-point}
\begin{split}
&\textup{tr} \, \beta^* K_P \sim   \eta^{-m-2} \sum\limits_{n=0}^\infty 
\kappa^{\td}_{2n}(\wx, \wy, \wz) \eta^{2n}, \quad \eta \to 0, \\
&\textup{tr} \, \beta^* K_P \sim   \rho^{-m-2} \sum\limits_{n=0}^{\infty} 
\kappa^{\ff}_{2n}(\widetilde{\xi}, \widetilde{y}, \widetilde{z}) \rho^{2n}, 
\quad \rho \to 0.
\end{split}
\end{equation}
Similar expansions hold of course for $P \in \Psi^{*}_{-}(M)$ with 
the index $2n$ being replaced by $(2n+1)$ in the expansion near $\ff$. We should point out that 
the even odd classification can equivalently be introduced by studying 
the sign change under the map $(x,u)\mapsto (-x,-u)$, cf. \cite{Albin}, once 
evenness of the pointwise traces is taken into account. We now study the 
action of the geometric operators on $(M,g)$ with respect to the
even odd calculus. 

\begin{prop}\label{even-odd-examples}
Let $m$ be the dimension of the incomplete edge manifold $M$ with an admissible edge metric $g$.
Denote by $b$ the dimension of each edge in $M$.
Then the exterior derivative $d$ and the Hodge star operator $*$ acts as follows with respect 
to the even odd subcalculi
\begin{align*}
d \circ \Psi^*_{\pm} \subset \Psi^*_{\pm}, \quad
* \circ \Psi^*_{\pm} \subset \Psi^*_{\pm}, \ \textup{if $(m-b)$ is even}; \quad
* \circ \Psi^*_{\pm} \subset \Psi^*_{\mp}, \ \textup{if $(m-b)$ is odd}.
\end{align*}
In particular 
\begin{enumerate}
\item the Hodge Laplacian respects the even subcalculus $\Psi^*_{+}$,
\item the Gauss Bonnet operator respects the even subcalculus $\Psi^*_{+}$, 
\item the signature operator\footnote{on an even-dimensional edge manifold} 
respects the even subcalculus $\Psi^*_{+}$ only if $b$ is even \footnote{when 
$\dim F$ is odd and $M$ is therefore Witt. The case of $\dim F$ even is excluded in this statement.}.
\item the odd signature operator\footnote{on an odd-dimensional edge manifold}  respects the even subcalculus $\Psi^*_{+}$ 
if $b$ is odd, and acts as $D \circ \Psi^*_{\pm} \subset \Psi^*_{\mp}$ if $b$ is even.
\end{enumerate}
\end{prop}

\begin{proof}
Consider the individual components of the exterior derivative $d$. 
Its action leaves the parity of coefficients in the td asymptotics invariant. Indeed for instance
$\beta^*(\partial_x) = (\wx \eta)^{-1} \partial_S$, where the lowering of td asymptotics by $\eta^{-1}$ is offset by
the parity change for the coefficients $\partial_S \kappa^{\td}_n$. \medskip

At the front face,
differentiation in $x$ with $\partial_x (\cdot )\wedge dx$ does not change the parity. Indeed, 
in \eqref{d-coord} coordinates $\beta^*(\partial_x) = \rho^{-1} \partial_\xi$
and hence differentiation in $x$ lowers the front face behaviour by one and is an odd action.
However $dx$ is odd as well, so that the total action of $\partial_x (\cdot )\wedge dx$ is even.
The $z$-derivative component in $d$ comes as $x^{-1}\partial_z (\cdot )\wedge xdz$. 
The factor $x^{-1}$ decreases front face behaviour by one and hence is odd. However, 
$xdz$ is odd as well, so that the total action of $x^{-1}\partial_z (\cdot )\wedge xdz$ is even.
Finally, the action of $\partial_y (\cdot )\wedge dy$ is even also. Indeed, $dy$ is even and 
$\beta^*(\partial_y) = \rho^{-1}\partial_u$ respects the even odd parity as well. \medskip

Let us now consider the Hodge star operator $*$ of $(M,g)$. Here we just need to count
the number of odd components $\{dx, xdz\}$ and the even components $\{dy\}$. 
We find that $*:\Lambda_\pm \to \Lambda_\pm$ if $(m-b)$ is even, and 
$*:\Lambda_\pm \to \Lambda_\mp$ if $(m-b)$ is odd. From there it is 
clear that $*$ respects the even odd subcalculus if $(m-b)$ is even, while
it changes the parity if $(m-b)$ is odd.\medskip

It remains to check the actions of the various geometric Dirac and Laplace operators. 
Action of the Hodge Laplacian and the Gauss-Bonnet operator
follows directly from the properties of $d$ and $*$. Action of the odd signature operator 
$* d \pm d*$ (in case $m=\dim M$ is odd) also follows directly from the properties of $d$ and $*$.
\medskip

The statement on the signature
operator in case $m=\dim M$ is even is obtained as follows. Recall that the signature
operator is the restriction of the Gauss-Bonnet operator to the $(+1)$-eigenspace 
of an involution $\tau$ with $\tau \restriction \Lambda^p T^*M  = i^{p(p-1)+m/2} *$. 
Since for $m$ and $b$ even the Hodge star operator $*$ respects the
the even and odd subcalculi, so does $\tau$. Hence the $(+1)$-eigenspace of 
$\tau$ decomposes into a direct sum of $\Lambda_+$ and $\Lambda_-$
components. The arguments for the Gauss-Bonnet operator then carry over to the
signature operator. \medskip

This argument fails for the signature operator in case $b$ is odd. 
The $(\pm1)$-eigenspace of $\tau$ is spanned by elements of the form 
$(\w\pm\tau \w)$ where $\w$ is any differential form on $M$. Since for $b$ 
odd, $\tau$ interchanges the even and odd differential forms, its $(\pm 1)$
eigenspaces are comprised of elements of mixed parity and do not admit a 
direct sum decomposition into even and odd forms. In this case the signature
operator acting on the $(+1)$-eigenspace of $\tau$ does not any longer admit
an even or odd classification. 
 \end{proof}

Analysis of the even odd parity for the exterior derivative extends to the
twisted setting. As before, let $E$ be a flat vector bundle on $M$ corresponding
to a unitary representation of the fundamental group of the singular space $\overline{M}$. 
This flat vector bundle induces in a natural way a flat vector bundle on the resolved
manifold $M_c$. Let $Y=\partial M_c$. We can and we shall assume that 
\begin{equation}\label{flatbd}
E|_{\U}\cong \pi^*(E_Y)\cong (0,1]\times E_Y,
\end{equation}
where $E_Y$ is the restriction of $E$ to $Y$,
 and where $\pi\colon (0,1]\times Y\simeq \U\to Y$ is the canonical projection.
We can endow the flat vector bundle with an hermitian metric and a covariant derivative
which are compatible with  \eqref{flatbd}.
In particular  we shall assume that  the flat covariant derivative $\nabla$  has a product structure 
bear the boundary, i.e. there exists a  $\nabla_Y$ 
such that for $s\in C^{\infty}((0,1], C^{\infty}(E_Y))\cong
\Gamma(E|_{\U})$ we have
\begin{align}\label{ddy}
\nabla s= \frac{\partial s}{\partial x}\otimes dx + \nabla_Y s.
\end{align}
Extend the covariant derivative to a linear operator on twisted differential forms $\nabla: \Omega^k(M,E)
\to \Omega^{k+1}(M,E)$. In view of \eqref{ddy} we can now repeat the previous arguments 
to deduce that $\nabla$ acts as $\nabla \circ \Psi^*_{\pm} \subset \Psi^*_{\pm}$, like the 
exterior derivative in Proposition \ref{even-odd-examples}. In particular the twisted Hodge 
Laplacian respects the even subcalculus as well. Note that unitarity of the representation $\rho$
is irrelevant for \eqref{flatbd} and \eqref{ddy}; however, it is central for asserting that the twisted geometric Dirac operator are 
formally self-adjoint. \medskip

While the statements on evenness and oddness of the Hodge star operator 
$*$ in \cite{MazVer} differ slightly from the discussion here,
our arguments still imply that the Hodge Laplacian on an edge 
manifold with an admissible edge metric
is even, as asserted in \cite{MazVer}. Evenness of the Hodge 
Laplacian as well as the fact that even kernels compose to even 
kernels again (and indeed form a subcalculus) is used in \cite{MazVer} to establish evenness of the heat kernel.

\begin{thm}[\cite{MazVer}]\label{hkeven}
The heat kernel $\HF$ for the Friedrichs extension of the Hodge Laplacian 
on an incomplete edge space $(M,g)$ with an admissible edge metric is
an element of $\Psi^{2,0,\calE}_{+}$, for some index
set $\calE$ at rf and lf. 
\end{thm}

We point out that evenness of $\HF$ was derived in \cite{MazVer}
under the rescaling transformation $\Phi$. The 
precise relation between the asymptotics of Schwartz kernels with 
and without the rescaling has been worked out in \cite[(3.13)]{MazVer} 
and the statement here follows by an easy counting of the 
leading orders in the front face asymptotics.  \medskip

In view of the parity properties of the Gauss Bonnet and the odd signature 
operators, as established in Proposition \ref{even-odd-examples}, we
introduce the notion of even and odd differential operators.
Denote by $\textup{Diff}_e^q(M, {}^{ie}\Lambda^*T^*M \otimes E)$ the space of differential operators 
acting on sections of ${}^{ie}\Lambda^*T^*M \otimes E$ by smooth linear combinations of $\ell$-th order 
compositions of edge vector fields $\mathcal{V}_e$.

\begin{defn}\label{eo-hodge}
An operator $D\in x^{-q}\textup{Diff}_e^q(M, {}^{ie}\Lambda^*T^*M \otimes E)$ is 
\emph{even} if $D  \circ \Psi^*_{\pm}(M) \subset \Psi^*_{\pm}(M)$. 
Similarly, $D\in x^{-q}\textup{Diff}_e^q(M, {}^{ie}\Lambda^*T^*M \otimes E)$ is \emph{odd}
if $D \circ \Psi^*_{\pm}(M) \subset \Psi^*_{\mp}(M)$. 
\end{defn}

Following the microlocal heat kernel construction in \cite{MazVer}, we may 
similarly construct the heat kernel for the square of even or odd allowable Dirac-type operators,
thereby extending the statement of Theorem \ref{hkeven}

\begin{thm}
Let $D$ be an allowable essentially self-adjoint Dirac-type operator of either even or odd parity
on an incomplete edge space $(M,g)$ with an admissible edge metric. Then the heat kernel of $D^2$ is
an element of $\Psi^{2,0,\calE}_{+}$, for some index set $\calE$ at rf and lf. 
\end{thm}

We point out that both the even and the odd operators are set to preserve
the parity of coefficients in the asymptotics at the temporal diagonal td.
This is in fact a general property of a Dirac type operator,  $D$, well 
known in the classical setting of smooth manifolds. There, indeed, it is 
a known, see for example \cite[Lemma 1.9.1]{Gilkey}, that in the asymptotic 
expansion of the pointwise trace
\begin{align}\label{td-even}
\textup{tr} D e^{-tD^2}(p,p) \sim_{t\to 0+} \sum_{n=0}^\infty \kappa_n(p) t^{\frac{-m-1+n}{2}}
\end{align}
the even coefficients $\kappa_{2k}$ vanish identically. In the singular
setting this property of the expansion corresponds to the Schwartz kernel of $D e^{-tD^2}$ having an even 
asymptotics at the temporal diagonal.

\subsection{The even and odd heat calculus for the spin Laplacian}

We assume that $M$ is spin and consider the spinor bundle $S$. 
As before the heat calculus is defined as follows.

\begin{defn}\label{heat-calculus-spin}  
Let $\calE = (E_{\lf}, E_{\rf})$ be an index family for the two side faces of $\mathscr{M}^2_h$. 
We define $\Psi^{\ell,p,\calE}_{\eh}(M)$ 
to be the space of all operators $P$ with Schwartz kernels $K_P$ 
which are pushforwards from polyhomogeneous functions 
$\beta^*K_P$ on $\mathscr{M}^2_h$, taking values in 
$S \boxtimes S$ with index family $(-m-2+\ell +\N_0,0)$ at $\ff$, $(-m  + p + \N_0, 0)$ 
at $\td$, $\varnothing$ at $\tf$ and $\calE$ for the two side faces of $\mathscr{M}^2_h$. 
\end{defn}

Assume for the moment that $m$ is even. Then \cite[Proposition 3.19]{BGV} identifies 
the spinor bundle $S$ locally as follows. Given an oriented local orthonormal basis\footnote{We assume
for simplicity that the higher order term $h$ in the Riemannian metric is zero.}
$\{e_1, ..., e_m\} := \{\partial_x, x^{-1}V_\A, U_\beta\}$ of $TM$, a polarization
$P$ of the complexified tangent bundle $TM^\C = P \oplus P^*$ is defined in \cite[Definition 3.18]{BGV} 
as the span of $\{e_{2j-1} - i e_{2j} \mid j=1,.., m/2\}$.
The spinor bundle $S$ is then locally defined as the exterior algebra $S=\Lambda^{*} P$.
The spinor bundle $S$ decomposes into a direct sum of half spinor bundles $\oplus_k \Lambda^{2k} P$ and 
$\oplus_k \Lambda^{2k+1} P$. 
The Riemannian metric $g$, extended to $TM^\C$ by $\C$-linearity, now canonically 
defines an inner product on the fibres of $S$.
Now, if $b$ is even, the fibre dimension $f$ is odd and the polarization $P$ is spanned by elements of the form 
$\{\partial_x - i x^{-1}V_1, x^{-1}V_{2k} - x^{-1}V_{2k+1}, U_{2j-1}- i U_{2j} \mid k=1, ..., (f-1)/2, j=1,..., b/2\}$.
\medskip

As in the previous subsection for differential forms, we assign, in view of the evenness condition on $g$, 
even parity to the vector fields $U_\beta$, and odd parity
to vector fields $\partial_x$ and $x^{-1}V_\A$. The crucial observation is that in the elements spanning $P$ there are no differences of 
even and odd vector fields, so that the spanning elements above are either of even or of odd, but not of mixed parity.
This defines parity for sections of $P$.
\medskip

Parity of elements in $S=\Lambda^{*} P$ is now defined as follows: exterior product of two even or two odd elements 
is defined to be even, exterior product of an even and an odd element is defined to be odd. 
We can therefore decompose $S$ as in \eqref{lambda-pm} into even and odd subbundles
\footnote{This should not be confused with the decomposition of
the spinor bundle into half spinor bundles, for which a similar notation with $\pm$ as superscript is usually employed. } 
\begin{align}
S \restriction \mathscr{U} = S_+ \oplus S_-.
\end{align}

Now let us consider the case of $m$ odd. In this case the spinor bundle arises locally as the positive half spinor bundle
of $M\times \R$. The tangent vector coming from the additional $\R$ component is assigned even parity
and we may again decompose $S \restriction \mathscr{U} = S_+ \oplus S_-$ as long as $(m-b)$ is even.

\begin{defn}\label{def-even-spin}
Let $(m-b)$ be always even for $m= \dim M$ and $b$ being dimension 
of any edge singularity. Consider $P \in \Psi^{*}_{\eh}(M)$ with the Schwartz kernel 
$K_P$. Then $P$ is said to be even if the conditions in Definition \ref{def-even} hold verbatim with 
the grading of differential forms replaced by the grading $S \restriction \mathscr{U} = S_+ \oplus S_-$. 
We denote the space of all even operators by $\Psi^{*}_{+}(M)$ and the 
space of all odd operators by $\Psi^{*}_{-}(M)$.
\end{defn}

As in the previous subsection we classify even and odd 
edge differential operators. 

\begin{defn}\label{eo-spin}
An operator $D\in x^{-q}\textup{Diff}_e^q(M, S)$ is 
\emph{even} if $D  \circ \Psi^*_{\pm}(M) \subset \Psi^*_{\pm}(M)$. 
Similarly, $D\in x^{-q}\textup{Diff}_e^q(M, S)$ is \emph{odd}
if $D \circ \Psi^*_{\pm}(M) \subset \Psi^*_{\mp}(M)$. 
\end{defn}

We can now prove the following central theorem. 

\begin{thm}\label{hkeven-spin}
Assume that the spin Dirac operator $D$ on an incomplete edge space $(M,g)$ 
with an admissible edge metric satisfies the geometric Witt condition. 
Assume that for the dimension $m$ of $M$ and for the dimension $b$ of any edge 
singularity, we always have $(m-b)$ is even. Then $D$ and $D^2$
are both even operators and the heat kernel $H$ 
for the spin Laplacian $D^2$ is an element of $\Psi^{2,0,\calE}_{+}$ for some index
set $\calE$ at rf and lf. 
\end{thm}

\begin{proof}
Note that by the explicit formulae of \cite{Chou}, the normal operator 
$N(x^2D^2)_{y_0}$ defines an initial heat kernel parametrix inside the 
even subcalculus $\Psi^{2,0,\calE}_{+}$. If $D$ maps the even subcalculus
$\Psi^{*}_{+}(M)$ to itself, then so does the spin Laplacian $D^2$. 
Following the argument outlined in \cite{MazVer}, the exact heat kernel 
must be an element of the even subcalculus as well. \medskip

Hence it remains to study the action of $D$ with respect to the even and odd 
subcalculi. Recall from above
\begin{align}
D = c(\partial_x) \partial_x + \frac{f}{2x} c(\partial_x) + \frac{1}{x} \sum_{\A=1}^f
c(x^{-1}V_\A) \nabla_{V_\A} + \sum_{\beta = 1}^b c(U_\beta) \nabla_{U_\beta} + W,
\end{align}
where $\{\partial_x, x^{-1}V_\A, U_\beta\}$ is an orthonormal frame
of vector fields , which respects the splitting \eqref{split}; $\nabla$ 
the covariant derivative on the spin bundle, induced 
from the lift of the Levi-Civita connection to the spin principle bundle; 
$c$ denotes the Clifford multiplication and $W$ a higher order term, 
made explicit in the proof of \cite[Lemma 2.2]{Albin-Jesse}. \medskip

In order to clarify if $D$ respects or interchanges the even and odd subcalculi,
we need to make explicit how the pointwise trace on $S\boxtimes S$ is induced by the 
admissible edge metric $g$. We also need to identify the Clifford action in explicit
terms. Recall the constructions from the beginning of this subsection.
Consider a polarization $P$ of the complexified tangent bundle $TM^\C = P \oplus P^*$ as defined above 
and the spinor bundle $S$ is then locally defined as the exterior algebra $S=\Lambda^{*} P$.
We may write any given tangent vector $V$ as a direct sum of an element $\pi(V)$ in $P$ and an element 
$\overline{\pi}(V)$ in the complex conjugate $\overline{P}\cong P^*$.  Then for any $s\in S$
\begin{equation}
\begin{split}
&c(\pi(V)) \cdot s = \sqrt{2} \textup{ext}(\pi(V)) s, \\
&c(\overline{\pi}(V)) \cdot s = -\sqrt{2} \textup{int}(\overline{\pi}(V)) s.
\end{split}
\end{equation}
One checks explicitly that $\pi(\partial_x) = (\partial_x - i x^{-1}V_1)/2$ and $\overline{\pi}(\partial_x) = (\partial_x + i x^{-1}V_1)/2$
are both of odd parity. Similarly, $\pi(V)$ and $\overline{\pi}(V)$ are odd for $V\in \{x^{-1}V_\A\}$, and even for 
$V\in \{U_\beta\}$. Consequently, Clifford multiplication by odd vector fields is indeed odd, and Clifford 
multiplication by even vector fields is indeed even. Note that this line of argumentation breaks down completely
if $b$ is odd. In that case we cannot exclude elements of mixed parity from the set of generators of $P$
and hence cannot classify the parity of the Clifford action any longer. This is in accordance with the case of the signature 
operator treated above. \medskip

In view of the fact that in the spin Dirac operator action, the scalar action of each $e_j$ is coupled with the Clifford action of $c(e_j)$, 
we conclude that leading order terms of the spin Dirac operator $D$ are even if $m$ is even and $(m-b)$ is even. 
Evenness of the higher order terms $W$ follows from their explicit structure, cf. the proof of \cite[Lemma 2.2]{Albin-Jesse}.
Now let us consider the case of $m$ odd. In this case the spinor bundle arises locally as the half spinor bundle
of $M\times \R$. The tangent vector coming from the additional $\R$ component is assigned even parity.
Repeating the arguments verbatim yields evenness of the spin Dirac operator $D$ if $(m-b)$ is even as well. 
\end{proof}

We conclude the section with noting that the condition $(m-b)$ to be even in the even odd 
classification of the signature and the spin Dirac operators, is naturally satisfied in 
the setting of algebraic varieties with edge singularities. 
This is of course due to the fact that all singularities
must be of even real codimension, $\C^n\cong \R^{2n}$.

\section{Short time asymptotic expansions of the heat traces}\label{trace-section}

In this section we continue in the general setting of an
edge manifold $M$ with an admissible incomplete edge metric $g$; we do not impose any 
isospectrality condition, cf. Remark \ref{not-isospectral}.

\subsection{Trace as an integral of the Schwartz kernel}
This subsection is concerned with establishing the trace class property
for the operators $e^{-t D^2}$ and $D e^{-t D^2}$ and deriving an integral 
formula for their traces, a version of  Lidskii's theorem in this singular setting. 
We present an argument that 
can then be adapted in the non-compact setting of Galois coverings below. 
\medskip

Let $E$ be any Hermitian vector bundle and let, as usual, $L^2(M,E)$ denote the 
space of square-integrable sections of $E$ with an inner product defined
by the Riemannian metric $g$ on $M$ and the Hermitian fibrewise metric $h$ of $E$. 
We shall also consider the Hilbert space  $L^2(M\times M, E \boxtimes E^*)$.
We recall the following central definitions (cf. Shubin \cite[\S 2.20]{Shubin2})
\begin{defn}\label{HS-definition}
Let $\{e_\alpha\}_{\A \in \mathcal{J}}$ denote some orthonormal basis of $L^2 (M,E)$
and write $\| \cdot \|$ for the norm on $L^2 (M,E)$. 
A  bounded operator $A$ on $L^2(M, E)$ is said to be a Hilbert-Schmidt operator if
\begin{equation}\label{HS-condition}
\sum_{\A \in \mathcal{J}} \| A e_\alpha \|^2< +\infty
\end{equation}
We denote the set of Hilbert-Schmidt operators by $\calC_2(M,E)$.
\end{defn}

It is easy to see that the left hand side of 
\eqref{HS-condition} is independent of the
choice of the orthonormal basis.
The following are classic results:

\begin{itemize}
\item $\calC_2(M,E)$ is an ideal; \medskip

\item If  $K_A \in L^2(M\times M, E \boxtimes E^*)$ 
denotes the Schwartz kernel of 
a bounded operator $A$, then $A\in \calC_2(M,E)$ 
if and only if $K_A \in  L^2(M\times M, E \boxtimes E^*)$;
\medskip

\item $\calC_2(M,E)$ can be given the 
structure of a Hilbert space  by defining the
Hilbert-Schmidt inner product: if $B, C \in \calC_2(M,E)$, then 
\begin{align}
\langle B, C \rangle_{2} := \iint_{M\times M} \textup{tr}_p 
\left( K_B(p,q) \overline{K_{C^*}(q,p)} \right)
\, \textup{dvol}_g (q) \textup{dvol}_g (p).
\end{align}
\end{itemize}
We denote the corresponding Hilbert Schmidt norm by $\| A \|_2$ 
\begin{defn}\label{trace class-definition}
The ideal $(\calC_2(M,E))^2 := \calC_2(M,E) \circ \calC_2(M,E)$ is, by definition, the ideal of trace class operators. Thus 
a bounded operator $A$ is trace class if it is a finite linear combination of products of
Hilbert-Schmidt operators.
We denote the space of trace class operators by $\calC_1(M,E) \subset \calC_2(M,E)$ and 
equip it with the trace norm $\| A \|_1 := \Tr |A|$, where the trace functional $\Tr$ is defined 
on $A = B \circ C$,  $B,C\in \calC_2(M,E)$, as
\begin{equation}\label{trace-definition}
\begin{split}
\Tr(A) := \langle B, C^* \rangle_{2} 
&= \iint_{M\times M} \textup{tr}_p 
\left( K_B(p,q) \overline{K_{C^*}(q,p)} \right)
\, \textup{dvol}_g (q) \textup{dvol}_g (p) \\ 
&= \iint_{M\times M} \textup{tr}_p 
\left( K_B(p,q) K_{C}(q,p) \right)
\, \textup{dvol}_g (q) \textup{dvol}_g (p).
\end{split}
\end{equation}
\end{defn}

\begin{remark}\label{HS-remark}
The aim of this subsection is to establish an integral representation of the 
trace for the operators $e^{-t D^2}$ and $D e^{-t D^2}$ in terms of their 
Schwartz kernels at the diagonal. While this might seem obvious by trying to
replace the inner integral in \eqref{trace-definition}
by $K_A(p,p)$, some care is necessary as explained e.g. 
in \cite[\S 2.21]{Shubin2}. The important issue is that the Schwartz kernel of a trace class operator 
$A = B \circ C$ with $B, C \in \calC_2(M,E)$ is given only almost everywhere by
\begin{equation}\label{product}
K_A(p,q)= \int_{M} K_B(p,s) K_{C}(s,q)
\textup{dvol}_g (s).
\end{equation}
In order to claim this identity at the set $\{p=q\}$ of measure zero, 
one needs an additional assumption of continuity of $K_A$
at the diagonal. Otherwise we cannot in general replace the inner integral in \eqref{trace-definition}
by $K_A(p,p)$ and present the trace of $A$ solely by an integral 
of its Schwartz kernel over the diagonal.
\end{remark}

We proceed by establishing the trace class property for 
$e^{-t D^2}$ and $D e^{-t D^2}$.
 
\begin{proposition}\label{lidskii-theorem2}
Let $D$ be an allowable Dirac-type operator on an incomplete edge space $(M,g)$. 
Assume that $D$ satisfies the geometric Witt condition.
Let $f\in \mathcal{S}(\RR)$ be a rapidly decreasing function
and let $f(D)$ the operator defined through functional calculus by the
unique self-adjoint extension of $D$. Then $f(D)$ and in particular
$e^{-t D^2}$ and $D e^{-t D^2}$ are trace class.
\end{proposition}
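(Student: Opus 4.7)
The plan is to first establish that $e^{-tD^2}$ is trace class for every $t>0$ and then deduce the general Schwartz case by spectral theory. Once $e^{-tD^2}$ is trace class, it is in particular compact, so $D$ (which is essentially self-adjoint by Proposition~\ref{ess-sa-allowable}) has compact resolvent and discrete spectrum $\{\lambda_n\}_{n\in\mathbb{N}}$ accumulating only at infinity. The trace-class condition forces $\sum_n e^{-t\lambda_n^2}<\infty$, which in turn gives a Weyl-type growth $|\lambda_n|\gtrsim n^{c}$ for some $c>0$. For any $f\in\mathcal{S}(\mathbb{R})$, the sequence $|f(\lambda_n)|$ then decays faster than any polynomial, so $\sum_n|f(\lambda_n)|<\infty$ and $f(D)=\sum_n f(\lambda_n)P_n$ is trace class with trace norm bounded by $\sum|f(\lambda_n)|$. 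The special cases $e^{-tD^2}$ and $De^{-tD^2}$ then correspond to the Schwartz functions $\lambda\mapsto e^{-t\lambda^2}$ and $\lambda\mapsto\lambda\,e^{-t\lambda^2}$.

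To show $e^{-tD^2}$ is trace class, I factor $e^{-tD^2}=e^{-(t/2)D^2}\circ e^{-(t/2)D^2}$ and appeal to Definition~\ref{trace class-definition}: it suffices that $e^{-(t/2)D^2}$ be Hilbert--Schmidt, which by Definition~\ref{HS-definition} amounts to verifying that its Schwartz kernel $K_{t}:=H(t/2,\cdot,\cdot)$ is square-integrable on $M\times M$. Here $H$ is the heat kernel of $D^{2}$, which by Theorem~\ref{heat-expansion} (in the Hodge--de~Rham case), Theorem~\ref{heat-expansion-spin} (in the spin case), and their extension to allowable $D$ from Section~\ref{even-odd-section}, lifts to a polyhomogeneous function $\beta^{*}H$ on $\mathscr{M}^{2}_{h}$ with prescribed index sets at the boundary hypersurfaces $\mathrm{ff},\mathrm{td},\mathrm{tf},\mathrm{rf},\mathrm{lf}$. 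For fixed $t>0$ the faces $\mathrm{ff},\mathrm{td},\mathrm{tf}$ all lie above $\{t=0\}$ and so are not met; only the polyhomogeneous behaviour at $\mathrm{rf}$ and $\mathrm{lf}$ is relevant. Pulling the integral
\[
\iint_{M\times M}\bigl|K_{t}(p,\widetilde{p})\bigr|^{2}\,\textup{dvol}_{g}(p)\,\textup{dvol}_{g}(\widetilde{p})
\]
back to $\mathscr{M}^{2}_{h}$ and pairing the leading exponents at $\mathrm{rf},\mathrm{lf}$ against the degenerate edge volume form $x^{f}\,dx\,dy\,dz$ yields convergence, with the key inputs being the geometric Witt condition (Assumption~\ref{Witt}), which rules out small indicial roots, and the essential self-adjointness of $D$ selecting the Friedrichs-type heat semigroup whose kernel is the one constructed microlocally.

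The main obstacle is precisely this convergence check at the side faces: showing that the leading exponents of $H(t/2,\cdot,\cdot)$ at $\mathrm{rf}$ and $\mathrm{lf}$ beat the degeneration of the volume form. Once this bookkeeping in the projective coordinates on $\mathscr{M}^{2}_{h}$ introduced in Section~\ref{microlocal-section} is carried out, the remaining steps---factoring $e^{-tD^{2}}$ into two Hilbert--Schmidt factors, deducing discreteness of the spectrum with Weyl growth, and invoking the Schwartz functional calculus---are standard spectral-theoretic manipulations and present no further difficulty.
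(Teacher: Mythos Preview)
Your route for $e^{-tD^2}$ coincides with the paper's second argument: for fixed $t>0$ the lift of the heat kernel to $\mathscr{M}^2_h$ meets only the side faces $\mathrm{rf},\mathrm{lf}$; the paper records that the index set there satisfies $E_{\mathrm{rf}}=E_{\mathrm{rf}}^0+\N$ with $E_{\mathrm{rf}}^0\geq 0$ (this is where the geometric Witt condition enters), so the kernel is $L^2$ on $M\times M$ against the edge volume $x^f\,dx\,dy\,dz$, hence Hilbert--Schmidt, and the semigroup factorisation finishes. The paper also runs the same $L^2$ check directly for the Schwartz kernel of $De^{-tD^2}$ rather than deducing it from the general Schwartz case.

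The gap is in your passage to arbitrary $f\in\mathcal S(\R)$. The claim ``$\sum_n e^{-t\lambda_n^2}<\infty$ for all $t>0$ implies $|\lambda_n|\gtrsim n^c$'' is false: finiteness of the heat trace yields only $N(\lambda)\leq C_t\,e^{t\lambda^2}$, not polynomial growth. Concretely, $\lambda_n=\log(n+2)$ satisfies $\sum_n e^{-t(\log(n+2))^2}<\infty$ for every $t>0$, yet for the Schwartz function $f(\lambda)=e^{-\sqrt{1+\lambda^2}}$ one has $f(\lambda_n)\sim(n+2)^{-1}$, which is not summable. So ``$e^{-tD^2}$ trace class for all $t$'' does \emph{not} by itself give ``$f(D)$ trace class for all Schwartz $f$''. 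The paper sidesteps this by quoting that $(\textup{Id}+D^2)^{-N}$ is trace class for $N>m/2$ (from \cite{Cheeger-spaces}) and then writing $f(D)=(1+D^2)^{-N}\bigl((1+D^2)^Nf(D)\bigr)$ as (trace class)$\circ$(bounded). If you prefer to stay self-contained, what you actually need is the short-time bound $\Tr e^{-tD^2}=O(t^{-m/2})$ as $t\to 0$, which \emph{is} a consequence of the polyhomogeneous heat-kernel description (cf.\ \S\ref{push-forward-subsection}); a Tauberian argument then gives the polynomial Weyl bound $N(\lambda)=O(\lambda^m)$, after which your remaining steps go through.
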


\begin{proof} We proceed by proving the statement first for a 
general $f\in \mathcal{S}(\RR)$ and then provide an alternative argument
for the particular examples $e^{-t D^2}$ and $D e^{-t D^2}$.
Note by Proposition \ref{ess-sa-allowable} that symmetric lower order perturbations $D= D^E_{\rm geo} + P$ 
of essentially self-adjoint geometric operators $D^E_{\rm geo}$, satisfying the geometric 
Witt condition, are essentially self-adjoint as well with the same domain.
Hence, proceeding as in \cite[Theorem 7.6]{Cheeger-spaces} 
one proves, for any allowable Dirac-type operator $D$ 
\begin{align}\label{trace-class}
(\textup{Id} + D^2)^{-N} \ \textup{is trace class in} \ L^2
\end{align}
if  $N>\dim M/2$. For these $N$ we consider the function $g(z):= 
(1+z^2) f(z)$ and remark that it is a bounded function.
Thus $g(D)$ is a bounded operator.
Writing $f(z)=(1+z^2)^{-N} g(z)$ we then have that $f(D)= (1+D^2)^{-N} ((1+D^2)^{N}f(D))=
 (1+D^2)^{-N} g(D)
$ is the composition
of a bounded operator, $g(D)$, and of a trace class operator, $(1+D^2)^{-N} $. Thus $f(D)$ is trace class
and the statement is proved.
\medskip

For the particular examples $e^{-t D^2}$ and $D e^{-t D^2}$ 
the trace class property can alternatively be concluded, assuming for simplicity the isospectrality
condition for the metrics on the links, 
by studying their Schwartz kernels as follows.
First of all let us consider the index set $E=(E_\rf, E_\lf)$ for the heat kernel
of $e^{-t D^2}$, lifted to the heat space $\mathscr{M}^2_h$, at the right and left
boundary faces. By symmetry of the heat kernel it suffices to discuss $E_\rf$ only.
In view of the microlocal heat kernel construction as in \cite{MazVer}, the index set can be written as 
\begin{align*}
E_\rf \equiv E^0_\rf + \N_0 
:= \{ \gamma + n \mid \gamma \in E^0_\rf, n \in \N_0 \}
\end{align*} where the integer numbers in $\N_0$ are set to begin with $0$, 
and $E^0_\rf$ is the index set of the initial parametrix defined by the heat kernel 
$H_{\mathscr{C}(F)}$ of the model cone $\mathscr{C}(F)$. By construction, the asymptotic expansion 
of $H_{\mathscr{C}(F)}$ at rf gets annihilated under the Laplacian $\Delta_{\mathscr{C}(F)}$
and hence also under the normal operator $N(D)$, obtained by lifting $D$ to $\mathscr{M}^2_h$
and restricting its action to the front face ff. From here one easily concludes that the 
index set of $D e^{-t D^2}$ at rf is given again by $E_\rf$. \medskip

From the microlocal description of the Schwartz kernel for $e^{-t D^2}$ and $D e^{-t D^2}$,
lifted to the heat space $\mathscr{M}^2_h$, as well as from positivity of the index sets 
$E=(E_\rf, E_\lf)$, we conclude that both Schwartz kernels are 
$L^2$ integrable on $M\times M$ for fixed times $t>0$. We point out that for fixed 
$t>0$ we study the heat kernel asymptotics away from front face in $\mathscr{M}^2_h$.
Consequently, the Schwartz kernels of $e^{-t D^2}$ and $D e^{-t D^2}$ are $L^2$ on 
$M \times M$ and hence by Definition \ref{HS-definition} the operators are Hilbert-Schmidt.
Using this and the semigroup property we can write
\begin{equation}\label{semigroup}
\begin{split}
De^{-t D^2} &= (De^{-\frac{t}{2} D^2})( e^{-\frac{t}{2} D^2}), \\
e^{-t D^2} &= (e^{-\frac{t}{2} D^2})( e^{-\frac{t}{2} D^2}).
\end{split}
\end{equation}
showing that the two operators are in fact trace class.
\end{proof}

Knowing that the operators $e^{-t D^2}$ and $D e^{-t D^2}$
are trace class does not necessarily yield an integral formula for their 
traces in terms of their respective Schwartz kernels, as pointed out in 
Remark \ref{HS-remark}. However, by the microlocal construction of the 
heat kernel in \S \ref{microlocal-section}, the Schwartz kernels of these 
operators are smooth along the diagonal for each fixed $t>0$ so that we can conclude
with the following central result. 

\begin{proposition}\label{lidskii-theorem}
Let $D$ be an allowable Dirac-type operator on an incomplete edge space $(M,g)$. 
Assume that $D$ satisfies the geometric Witt condition.
Then the heat operator $e^{-t D^2}$  for the  unique self-adjoint extension of $D^2$
at a fixed time $t>0$ is a trace class operator and for its trace
the following formula holds:
\begin{equation}\label{lidskii}
\Tr (e^{-t D^2})=\int_M \tr_p \, \mathcal{H} (t) (p,p) \textup{dvol}_g(p),
\end{equation}
with $\mathcal{H} (t)$ equal to the heat kernel. Similarly, 
at a fixed time $t>0$, the operator $De^{-t D^2}$ is trace class and 
its trace can be computed by the formula
\begin{equation}\label{lidskii-bis}
\Tr (De^{-t D^2})=\int_M \tr_p \mathcal{K} (t) (p,p) \textup{dvol}_g(p),
\end{equation}
where $\mathcal{K} (t)$ equal to the Schwartz kernel of $De^{-t D^2}$.
\end{proposition}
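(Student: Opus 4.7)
The plan is to leverage the smoothness along the diagonal of the Schwartz kernels of $e^{-tD^2}$ and $De^{-tD^2}$ at fixed $t>0$, which follows from the microlocal description in Section \ref{microlocal-section}, in order to upgrade the general Hilbert--Schmidt trace formula \eqref{trace-definition} to the desired diagonal integral representation. The trace-class property itself has already been secured by Proposition \ref{lidskii-theorem2}; what remains is to address the subtlety recalled in Remark \ref{HS-remark}, namely that the Schwartz kernel of a generic trace-class operator is defined only almost everywhere and cannot a priori be restricted to the diagonal.

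First I would recall the semigroup factorizations \eqref{semigroup}
\begin{align*}
e^{-tD^2} = e^{-(t/2)D^2} \circ e^{-(t/2)D^2}, \qquad De^{-tD^2} = \bigl(De^{-(t/2)D^2}\bigr) \circ e^{-(t/2)D^2},
\end{align*}
which, combined with the Hilbert--Schmidt property of each factor established in Proposition \ref{lidskii-theorem2}, yield via Definition \ref{trace class-definition} the double-integral expressions
\begin{align*}
\Tr\bigl(e^{-tD^2}\bigr) &= \iint_{M\times M} \tr\bigl( \mathcal{H}(t/2)(p,q)\,\mathcal{H}(t/2)(q,p) \bigr)\, \textup{dvol}_g(q)\,\textup{dvol}_g(p),
\end{align*}
and an analogous formula for $\Tr(De^{-tD^2})$. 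By \eqref{product} and the semigroup property, for almost every $(p,p') \in M\times M$
\begin{align*}
\mathcal{H}(t)(p,p') = \int_M \mathcal{H}(t/2)(p,s)\,\mathcal{H}(t/2)(s,p')\,\textup{dvol}_g(s),
\end{align*}
so the task is to justify that this identity may be evaluated at the null set $\{p=p'\}$.

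Second, I would invoke Theorems \ref{heat-expansion} and \ref{heat-expansion-spin}, extended to $De^{-tD^2}$ by the remarks following them. These results describe $\beta^*\mathcal{H}$ and $\beta^*\mathcal{K}$ as polyhomogeneous distributions on $\mathscr{M}^2_h$, with singular behaviour concentrated at the faces $\ff$, $\td$, and $\tf$. Because the slice $\{t = t_0\} \cap \beta^{-1}(M \times M)$ at any fixed $t_0 > 0$ is contained in the interior of $\mathscr{M}^2_h$ relative to these three faces, the kernels $\mathcal{H}(t_0)$ and $\mathcal{K}(t_0)$ restrict to smooth sections of the relevant bundles over $M \times M$, and in particular are continuous across the diagonal.

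Third, this continuity extends the convolution identity from almost every point to every point, including the diagonal, and the combination with Fubini then gives
\begin{align*}
\Tr\bigl(e^{-tD^2}\bigr) = \int_M \tr_p \mathcal{H}(t)(p,p)\,\textup{dvol}_g(p),
\end{align*}
together with the analogous formula \eqref{lidskii-bis} for $De^{-tD^2}$. The main obstacle is precisely the continuity step on the diagonal; it is this ingredient, supplied by the polyhomogeneous heat kernel construction of Section \ref{microlocal-section}, that substitutes in the singular setting for the classical argument available for smooth closed manifolds.
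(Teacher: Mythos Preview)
Your proposal is correct and follows essentially the same approach as the paper's own proof: factor via the semigroup identity \eqref{semigroup} into Hilbert--Schmidt pieces, apply the double-integral trace formula from Definition \ref{trace class-definition}, and then invoke the smoothness of the kernels on $M\times M$ at fixed $t>0$ (supplied by the microlocal construction of \S\ref{microlocal-section}) to justify collapsing the convolution to the diagonal. Your write-up is in fact slightly more explicit than the paper's about why the fixed-time slice avoids the singular faces of $\mathscr{M}^2_h$, but the argument is the same.
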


\begin{proof}
By the microlocal description of the heat kernel in 
\S \ref{microlocal-section} the Schwartz kernels of $e^{-t D^2}$ and $D e^{-t D^2}$
are smooth in $M\times M$ for $t>0$, and in particular they are continuous at the 
diagonal of $M\times M$. We conclude\footnote{The trace integral 
formula for $e^{-t D^2}$ can be obtained differently
from the trace-class property and positivity of the operator using Mercer theorem.} 
in view of \eqref{semigroup}
and \eqref{product} (we write $dp=\textup{dvol}_g(p)$ to simplify notation)
\begin{equation*}
\begin{split}
\Tr e^{-t D^2} &= \int_M \int_M  \textup{tr}_p \, \left(e^{-\frac{t}{2} D^2}\right)(p,q) 
\left( e^{-\frac{t}{2} D^2}\right) (q,p) 
\, dq \, dp  \\ &=\int_M  \textup{tr}_p \, \left(e^{-t D^2}\right)(p,p) \, dp, \\
\Tr De^{-t D^2} &= \int_M \int_M  \textup{tr}_p \, \left(De^{-\frac{t}{2} D^2}\right)(p,q) 
\left( e^{-\frac{t}{2} D^2}\right) (q,p) 
\, dq \, dp \\ &=\int_M  \textup{tr}_p \, \left(De^{-t D^2}\right)(p,p) \, dp.
\end{split}
\end{equation*}
In both identities the first equality follows from the semigroup 
property in \eqref{semigroup} and Definition \ref{HS-definition}
of the trace, and the second identity follows from continuity of the Schwartz
kernels at the diagonal, which yields an identity of the form \eqref{product}
at $\{p=q\}$. 
\end{proof}

We will now proceed with a derivation of an asymptotic expansion for the traces 
 $\textup{Tr} (e^{-tD^2})$  and $\textup{Tr} (D e^{-tD^2})$ 
for allowable even or odd Dirac-type operators $D$ under the geometric Witt assumption.

\subsection{The push-forward theorem of Melrose and the asymptotic 
expansion of the heat traces.}\label{push-forward-subsection}

Consider the diagonal $D'_0$ in $M_c \times M_c$ and set $D_0 = \RR^+ \times D'_0$. 
We denote by $D_h$ its lift to $\mathscr{M}^2_h$,
as pictured in Fig. 2. Denote by $D_{\td} = D_h \cap \td$, 
$D_{\ff} = D_h\cap \ff$ and $D_{\textup{cf}} = D_h \cap \rf\cap \lf$ 
the boundary faces of the lifted diagonal $D_h$. 

\begin{figure}[h]
\begin{center}
\begin{tikzpicture}
\draw  (0,0) -- (0,2);

\draw  (0,0) .. controls (0.5,-0.5) and (1,-1) .. (1,-2);
\draw  (0,0) .. controls (0.5,-0.1) and (1.2,-0.5) .. (1.6,-1.5);
\draw  (0,0) .. controls (0.5,0) and (1.5,0) .. (2,-1.3);

\draw  (1,-2) -- (3,-3);
\draw  (2,-1.3) -- (4,-1.3);

\draw  (1.7,-1.5) .. controls (1.5,-1.5) and (1.4,-1.6) .. (1.4,-1.8);
\draw  (1.7,-1.5) -- (3.6,-1.9);
\draw  (1.4,-1.8) -- (3.4,-2.3);

\draw  (3.6,-1.9) .. controls (3.5,-1.9) and (3.4,-2.2) .. (3.4,-2.3);
\draw  (3.6,-1.9) .. controls (3.7,-1.9) and (3.8,-2) .. (3.8,-2.1);

\draw  (3.6,-1.9) -- (3.6,0.1);
\draw  (3.6,0.1) -- (0,0.7);

\draw  (1,-2) -- (1.4,-1.8);
\draw  (2,-1.3) -- (1.8,-1.5);

\node at (4.5,-2) {\large{td}};
\node at (-0.5,1) {\large{cf}};
\node at (0.1,-1) {\large{ff}};
\node at (4,1) {$D_h \subset M^2_h$};

\end{tikzpicture}
\end{center}
\label{diagonal-picture}
\caption{The diagonal hypersurface $D_h$ in $\mathscr{M}^2_h$.}
\end{figure}

Let $\iota: D_h \hookrightarrow \mathscr{M}^2_h$ denote the natural embedding and $\iota^* H$ the 
restriction of $H\equiv e^{-tD^2}$ to the diagonal. 
If the fibres $(F_y, \kappa_y), y \in B$ are isospectral, $H \in \Psi^{2,0,\calE}_{+}
(M)$, the restriction $\iota^* H$ and its pointwise trace $\tr H$
are polyhomogeneous on $D_h$. Polyhomogeneity is straightforward 
and can be checked in local coordinates, but is also
a special case of the `push-forward theorem' in \cite{Mel2}. 
The index family of $\tr \iota^* H$ is given by 
\begin{align}\label{G}
G_{\td} = -m + 2\mathbb N_0,\ G_{\ff} = -m + 2\mathbb N_0,\ G_{\textup{cf}} = E_{\lf} + E_{\rf},
\end{align}
where the odd terms in $G_\td$ and $G_\ff$ vanish due to evenness conditions in Definitions 
\ref{def-even} and \ref{def-even-spin}. 
Note that functions on $\td$ which are odd with respect to the reflection $(S,U,Z) \mapsto -(S,U,Z)$ 
in coordinates \eqref{d-coord}, vanish on $D_h$ and do not contribute to the pointwise trace asymptotics.
Similarly, functions on $\ff$ which are odd with respect to $u \mapsto -u$ 
in coordinates \eqref{top-coord}, vanish on $D_h$ as well and thus also have vanishing trace. \medskip

In the general case, the fibres $(F_y, \kappa_y), y \in B$ are not isospectral. Still, the same statements
hold at ff and td, and the statement on the asymptotics near cf is replaced by integrability of $\iota^* H$ up to cf.
Note that the asymptotics at cf is irrelevant below, so we continue under the isospectrality assumption for notational
simplicity. \medskip

We now want to express the actual trace $\textup{Tr} H$ as a pushforward of some polyhomogeneous density.
Recall here that by Proposition \ref{lidskii-theorem} the heat operator is trace class and that 
the trace $\textup{Tr} H$ is given by the integral of the pointwise trace of the heat kernel along the diagonal.
\medskip

We employ a formalism for understanding pushforwards of polyhomogeneous distributions, known
as Melrose's Pushforward Theorem \cite{Mel2}. Let the map $\pi_c: D_h \to \RR^+$
be defined as the composition of the blowdown map $D_h \to \RR^+ \times M_c$ and the projection 
$\RR^+ \times M_c \to \RR^+$. Then, following straightforward computations outlined in \cite[\S 4]{MazVer}, 
$\Tr H$ is the pushforward of $\tr H$ by $\pi_c$:
\begin{equation}
\Tr H (t) \frac{dt}{t} = (\pi_c)_* \left( 2 \rho_\ff \rho_{\textup{cf}}\tr H \beta^*(x^f) \mu_b\right),
\label{bigtrace}
\end{equation}
where $\mu_b$ is a $b$-density on $D_h$ of the form $(\rho_{\td} \rho_{\ff} \rho_{\textup{cf}})^{-1}\mu_0$
for some density $\mu_0$ which is smooth up to all boundary faces and nowhere vanishing. 
The factor $\beta^*(x^f)$ comes from the volume form $\textup{dvol}(g)$ with equals 
$x^f dx dy dz$ up to some bounded function on $M$. Most importantly, 
Melrose's pushforward theorem asserts (cf. \cite[(4.3)]{MazVer}) that for the index family 
$\calG = (G_\td, G_\ff, G_{\textup{cf}})$ of $\rho_\ff \rho_{\textup{cf}}\tr \HF$,
the index family of the pushforward $\Tr H (t)$ is given by $F = \pi_c^{\flat}(\calG)$ which is defined as follows. 
If $G_{\td} = \{ (z_j,p_j)\}$ and $G_\ff = \{w_\ell, q_\ell)\}$, then $G'_{\td} = \{ (z_j/2,p_j)\}$, $G'_\ff = \{(w_\ell/2, q_\ell)\}$
and $F=G'_\td \overline{\cup} G'_{\ff}$ is the `extended union' of $G'_{\td}$ and $G'_\ff$ 
\begin{align}
\label{extended}
G'_\td \overline{\cup} G'_{\ff} = G'_\td \cup G'_\ff \cup \{(z, p + q + 1): \, (z,p) \in G'_\td,\ 
\mbox{and}\  (z,q) \in G'_{\ff} \}.
\end{align}
We refer to \cite[(42)]{Mel2} for a proof. Evaluating the index sets explicitly from \eqref{G},
we obtain in view of \eqref{extended}
\begin{align}\label{H-exp}
\Tr H(t) \sim \sum_{\ell=0}^{\infty}U_\ell t^{\ell-\frac{m}{2}}+
\sum_{\ell=0}^{\infty}V_\ell t^{\ell-\frac{b}{2}}+\sum_{\ell\in \mathfrak{I}}W_\ell t^{\ell-\frac{b}{2}}\log t, 
\end{align}
where $\mathfrak{I} = \varnothing$ if $(m-b)$ is odd and $\mathfrak{I}=\N_0$ if $(m-b)$ is even.
Here, the coefficients $U_\ell$ are defined as integrals over $M$ of pointwise traces of coefficients in the 
asymptotic expansion of $\beta^*H$ at td. These coefficients are functions on $M\times M$ and 
their pointwise traces are functions over $M$, depending pointwise on the jets of the Riemannian metric
and only on the rank of the twisting vector bundle. The coefficients $V_\ell$ and $W_\ell$ are 
integrals over $B$ of traces in $L^2(F_y \times F_y,E|_{F_y} \boxtimes E|_{F_y}; \kappa_y), y\in B$ of 
coefficients in the product type asymptotic expansion at the front face intersecting td. 
The $L^2(F_y \times F_y,E|_{F_y} \boxtimes E|_{F_y}, \kappa_y), y\in B$ traces of these coefficients are functions 
over $B$, which are global in the fibres of the edge fibration $\phi: Y \to B$ and local along the base $B$, i.e. depends
pointwise on a finite number of jets of the coefficients of $D$ in the radial and edge direction
at a given point at the edge singularity. All of this is explained in detail in 
\S \ref{coefficients-section}. \medskip

In case $M$ has several edge singularities $B_i, i\in I$ of dimension $b_i$, respectively, the asymptotic
expansion of $\Tr H(t)$ is of a similar form 
\begin{align}\label{H-exp}
\Tr H(t) \sim \sum_{\ell=0}^{\infty}U_\ell t^{\ell-\frac{m}{2}}+
\sum_{i \in I} \sum_{\ell=0}^{\infty}V_{\ell i} t^{\ell-\frac{b_i}{2}}+
\sum_{i \in I} \sum_{\ell\in \mathfrak{I}}W_{\ell i} t^{\ell-\frac{b_i}{2}}\log t. 
\end{align}

A similar analysis may be performed for the trace class operator $\textup{Tr} (D H)$.

\begin{thm}\label{exp1}
Let $D$ be an even or odd essentially self-adjoint allowable Dirac-type operator on an admissible edge manifold $(M,g)$
with $\dim M = m$ and a finite collection of edges $B_i, i\in I$ of dimension $b_i$, respectively. Then  \begin{equation}
 \begin{split}
&\Tr D H \sim_{t\to 0} \sum_{\ell = 0}^\infty A_\ell t^{\ell - \frac{m}{2}} 
+ \sum_{i \in I} \sum_{\ell = 0}^\infty B_{\ell i} t^{\ell - \frac{b_i}{2}}
+ \sum_{i \in I} \sum_{\ell \in \mathfrak{I}_i} C_{\ell i} t^{\ell - \frac{b_i}{2}} \log t, \ \textup{if $D$ is even}, \\
&\Tr D H \sim_{t\to 0} \sum_{\ell = 0}^\infty A_\ell t^{\ell - \frac{m}{2}} 
+ \sum_{i \in I} \sum_{\ell = 0}^\infty B_{\ell i} t^{\ell - \frac{b_i+1}{2}}
+ \sum_{i \in I} \sum_{\ell \in \N_0 \backslash \mathfrak{I}_i} C_{\ell i} t^{\ell - \frac{b_i+1}{2}} \log t, \ \textup{if $D$ is odd},
\end{split}
\end{equation}
where in both cases $\mathfrak{I}_i = \varnothing$ if $(m-b_i)$ is odd and $\mathfrak{I}_i=\N_0$ if $(m-b_i)$ is even.
\end{thm} 

\begin{proof} Assume for notational simplicity that there is a singe edge of dimension $b$ and that the metrics
on the links of the edge are isospectral. The general case is treated verbatim.
Let $D$ be even. Evenness of $D$ implies by definition $D H \in \Psi^{1,-1,\calE-1}_{+}(M)$. 
Consequently, the pointwise trace $\tr D H$ lifts to a polyhomogeneous
function on the diagonal $D_h$ with the index sets 
 \begin{equation}
 \begin{split}
&G_{\td} = (-1 -m + \N_0) \cap (-2-m+2\N_0) = - m+ 2 \N_0,\\
&G_{\ff} = (-1 -m + \N_0) \cap (-2-m+2\N_0) = -m +2\N_0,\\
&G_{\textup{cf}} = E_{\lf} + E_{\rf} - 1 >-1,
\end{split}
\end{equation}
Note that $G_\td$ corresponds to the interior expansion in \eqref{td-even}.
Let now $D$ be odd, so that by definition $D H \in \Psi^{1,-1,\calE-1}_{-}(M)$. 
Consequently, the pointwise trace $\tr D H$ lifts to a polyhomogeneous
function on the diagonal $D_h$ with the index sets 
 \begin{equation}
 \begin{split}
&G_{\td} = (-1 -m + \N_0) \cap (-2-m+2\N_0) = - m + 2 \N_0,\\
&G_{\ff} = (-1 -m + \N_0) \cap (-1-m+2\N_0) = -1-m+2\N_0,\\
&G_{\textup{cf}} = E_{\lf} + E_{\rf} - 1 >-1,
\end{split}
\end{equation}
The statement now follows by making the index set in 
\eqref{extended} explicit. 
\end{proof}

In the special case of $D$ being the Gauss-Bonnet, the signature or the spin Dirac operator,
we can obtain a much stronger statement.

\begin{thm}\label{exp2}
Let $D$ be an essentially self-adjoint geometric Dirac operator, i.e. either the Gauss-Bonnet,
the signature or the spin Dirac operator on an admissible edge manifold $(M,g)$ 
with $\dim M = m$ and a finite collection of edges $B_i, i\in I$ of dimension $b_i$, respectively. Then 
 $\Tr D H \equiv 0$ if $m$ is even, and the following short time 
asymptotic expansion holds if $m$ is odd
\begin{equation*}
\begin{split}
&\Tr D H \sim \sum_{\ell = \frac{(m+1)}{2}}^\infty A_\ell t^{\ell - \frac{m}{2}} 
+ \sum_{i \in I} \sum_{\ell = 0}^\infty B_{\ell i} t^{\ell - \frac{b_i}{2}}
+ \sum_{i \in I} \sum_{\ell \in \mathfrak{I}_i} C_{\ell i} t^{\ell - \frac{b_i}{2}} \log t, \ \textup{if $D$ is even}, \\
&\Tr D H \sim \sum_{\ell = \frac{(m+1)}{2}}^\infty A_\ell t^{\ell - \frac{m}{2}} 
+ \sum_{i \in I} \sum_{\ell = 0}^\infty B_{\ell i} t^{\ell - \frac{b_i+1}{2}}
+ \sum_{i \in I} \sum_{\ell \in \N_0 \backslash \mathfrak{I}_i} C_{\ell i} t^{\ell - \frac{b_i+1}{2}} \log t, \ \textup{if $D$ is odd}.
\end{split}
\end{equation*}
In both cases $\mathfrak{I}_i = \varnothing$ if $(m-b_i)$ is odd and $\mathfrak{I}_i=\{\ell \in \N_0\mid \ell \geq 
\frac{(m+1)}{2}\}$ if $(m-b_i)$ is even. 
Note that the class of even geometric Dirac operators $D$ 
on odd-dimensional manifolds includes the Gauss-Bonnet operator for any $b_i$, and the odd
signature as well as the spin Dirac operator if all $(m-b_i)$ are even. 
The class of odd geometric Dirac operators on odd dimensional
manifolds includes the odd signature operator if all $(m-b_i)$ are odd.
\end{thm} 

\begin{proof}
Let $E$ denote $S\otimes L$ in case $D$ refers to the spin Dirac operator, 
and $\Lambda^*T^*M \otimes L$ in case $D$ refers to the Gauss-Bonnet or the signature 
operator. Here $L$ is any flat Hermitian vector bundle induced by a unitary representation,
and $S$ the spinor bundle. Assume first that $m$ is even. 
Then, exactly as in the smooth compact setting, there exists 
a unitary $\phi: L^2(M,E) \to L^2(M,E)$, respecting the self-adjoint domain of $D$ 
such that $\phi D + D \phi = 0$. Consequently, the spectrum of $D$ is symmetric around 
zero and we conclude $\Tr D H = 0$. \medskip

Assume now that $m$ is odd. Then the spectrum of $D$ need not be symmetric any longer
and the trace asymptotics is non-trivial. Note that each coefficient $A_\ell$ comes from the heat 
kernel expansion at td, and hence is local in the sense that $A_\ell = \int_M a_\ell$, where at any given 
point $p\in M$ each $a_\ell(p)$ depends only on the jets of the Riemannian metric tensor $g$ at $p$. 
In particular, if an open neighborhood of $p\in M$ is isometrically identified with an 
open neighborhood of $p'$ in a smooth closed manifold $M'$, then $a_\ell(p)$ equals to the
corresponding coefficient $a'_\ell(p')$ in the heat trace asymptotics on $M'$. Using Getzler rescaling 
and local index theorem techniques on the closed manifold $M'$,
Bismut and Freed \cite{BiFr} have established vanishing of the local coefficients $a'_\ell$
for $(2\ell - m) < 1$, compare also Melrose \cite{Mel:TAP}. Consequently, $A_\ell$
vanish for $(2\ell - m) < 1$.\medskip

Now, by the definition \eqref{extended} of extended unions, coefficients
$C_{\ell i}$ arise if and only if the asymptotic expansion of $\Tr D H$ admits terms of the form
$A_{\ell+\frac{f}{2}}$ and $B_{\ell i}$ are both non-zero. In particular, $C_{\ell i}$ vanish for $(2\ell - m) < 1$
as well. This proves the statement. \end{proof}

We point out that without the assumption of $D$ being even or odd, all
statements in Theorems \ref{exp1} and \ref{exp2} continue to hold with the index $\ell$ 
being fractional, $\ell \in \N_0 / 2$.

\subsection{On locality of coefficients in the asymptotic expansions}\label{coefficients-section}

In this subsection we clarify to what extent the coefficients $A_\ell, B_\ell$ and $C_\ell$
for $\ell \in \N_0$ in the various asymptotic expansions of Theorems \ref{exp1} and
\ref{exp2} depend on the choice of a twisting flat vector bundle $E$. Consider $\iota^*(DH)$,
its asymptotics at the temporal diagonal and its product type asymptotic expansion near the lower corner of 
the front face in $D_h$
\begin{equation}\label{ptwise-asymptotics}
\begin{split}
&\iota^*(DH) \sim \sum a_{\ell} \, \rho_\td^{-m-1+2\ell} , \quad \rho_\td \to 0, \\
&\iota^*(DH) \sim \sum b_{kj} (\rho_\td \rho_\ff)^{-m-1} \rho_\td^{k} \rho_\ff^{j}, \quad \rho_\ff, \rho_\td \to 0,
\end{split}
\end{equation}
where the coefficients $a_{\ell}$ are smooth sections of $({}^{ie}\Lambda^*T^*M \otimes E) 
\otimes ({}^{ie}\Lambda^*T^*M \otimes E)^*$ in the Hodge de Rham setting, and sections of 
$(S \otimes E) \otimes (S \otimes E)^*$ in the spin setting. The coefficients $b_{kj}$ are smooth 
functions over $B$, coming from the intersection of ff and td, taking values in $C^\infty(F_y \times F_y,E|_{F_y} 
\boxtimes E^*|_{F_y})$ for each $y \in B$. Therefore each $b_{kj}(y)$ defines a trace class operator 
on $L^2(F_y,E|_{F_y}; \kappa_y)$. Taking pointwise traces in the fibres of $({}^{ie}\Lambda^*T^*M \otimes E) 
\otimes ({}^{ie}\Lambda^*T^*M \otimes E)^*$ yields smooth scalar functions $\tr a_{\ell}$ on $M$. 
Taking the functional analytic trace of trace class operators in $L^2(F_y,E|_{F_y}; \kappa_y)$ yields
smooth scalar functions $\tr b_{jk}$ on $B$. 
 \medskip

Short time asymptotics of $\Tr DH$ follows from either an application of the Pushforward theorem
by Melrose or in this specific situation by direct computations from the pointwise asymptotic expansions
in \eqref{ptwise-asymptotics}. In either approach it is clear that the coefficients $A_\ell$ are given as integrals 
over $M$ of multiples of $\tr a_{\ell}(p), p \in M$, which depend pointwise on a finite numbers of jets of the 
full symbol of $D$ at $p \in M$. We call such functions \emph{local over the interior $M$.}
\medskip

The coefficients $B_\ell$ and $C_\ell$ are given as integrals over $B$
of linear combinations of the traces $\tr b_{jk}(y), y\in B$, which are global
in the fibres of the edge fibration $\phi: Y \to B$ and local along the base $B$. This means that 
$\tr b_{jk}(y)$ depends in local coordinates $(x,y,z)$ near the edge singularity on 
a finite number of $x$-jets and $y$-jets of the coefficients of the operator $D$ 
at $y\in B$. We call such functions \emph{local over the edge $B$}.
\medskip

In both cases the dependence is functorial in the following sense: the full symbol
is not a coordinate invariant expression, however $\tr a_{\ell}(p)$
and $\tr b_{jk}(y)$ are independent of a particular choice of coordinates, since the 
Schwartz kernel $DH$ is invariantly defined and hence does not depend on the 
choice of local coordinates and local frames, compare \cite[Lemma 1.8.2]{Gilkey}.
 \medskip

Thus, the heat kernel expansion at td contributes coefficients to the short time asymptotics of 
$\Tr DH$ that are defined as integrals of local quantities over $M$. 
At the same time, the heat kernel expansion at ff contributes coefficients that are defined as 
integrals of local quantities over the edge singularity $B$.
This proves the following theorem 

\begin{thm}\label{trace-coefficients-flat}
Consider the short time asymptotic expansions in Theorems \ref{exp1} and
\ref{exp2}. Their coefficients are given by integrals of local quantities in the following sense.
\begin{enumerate}
\item the coefficient $A_\ell$ is an integral of multiples of the 
pointwise traces $\tr a_{\ell}(p)$ over $p \in M$, 
which are local over the interior $M$ in the following sense: $\tr a_{\ell}(p)$ depends 
functorially pointwise on a finite number of jets of the full symbol of $D$ at $p \in M$.
\item the coefficients $B_\ell$ and $C_\ell$ are integrals of linear combinations of the functional analytic traces $\tr b_{jk}(y)$
over $y\in B$, which are local over the edge $B$ in the following sense: each $\tr b_{jk}(y)$ is global
in the fibre $F_y$ of the edge fibration $\phi: Y \to B$ at $y\in B$ and local along the base $B$.
\end{enumerate}
\end{thm}

\subsection{Heat kernel on iterated cone-edge spaces}\label{iterated-heat-trace}

We remark that the microlocal  description of the heat kernel
carries over to the case of $D$ being a geometric Dirac operator 
on the higher depth iterated cone-edge spaces singled out 
in \S \ref{iterated-section}. More precisely, we assume that the bottom statum,
the most singular, is a point $P$ and the Riemannian metric in a neighbourhood of $P$ takes
the form 
$g\restriction \U = dx^2 + x^2 \kappa$ where the link $(F,\kappa)$ is a stratified
space with an iterated cone-edge metric $\kappa$ with the property that $D^2$ 
is the Hodge de Rham or the spin Laplacian and 
the tangential operator $D_F$ admits discrete spectrum (the Gauss Bonnet or the
signature operator associated to a Witt space $F$
of arbitrary depth with a rigid iterated edge metric would be an example).
\medskip

In this case, we may employ the discrete spectral decomposition of $D_F$ on the
cross section $F$ to write the heat kernel of $D^2$ explicitly in terms of Bessel functions.
The corresponding formula in the Hodge de Rham setting is provided in 
\cite[(3.9)]{MazVer}. The corresponding formula in the spin setting is provided 
above in \eqref{Bessel-heat-spin}. Since the Riemannian metric is assumed to be exact near the 
cone tip $P$, one does not require any microlocal calculus techniques to construct an exact 
heat kernel solution and may infer the heat kernel asymptotics straight from the explicit formulae.
In particular, \eqref{H-exp} continues to hold in that specific iterated cone-edge setting
for $D^2$ being the Hodge de Rham or the spin Laplace operator. Note that due to the explicit 
formulae, this cone edge singularity, the point $P$, contributes only the $B_0$ and does not contribute any $C_*$ coefficients.
\medskip

However an extension of an even and odd classification of geometric operators and 
definition of an even and odd subcalculus as in \S \ref{even-odd-section}, to higher
depth iterated cone-edge space is not obvious. While we may still try to mimic the even 
and odd subcalculus conditions at the front face ff, an extension of Theorems \ref{exp1} 
and \ref{exp2} depends on the explicit action of $D$ on the eigenspaces of the tangential 
operator $D_F$, which is now complicated due to $F$ being singular.

\section{Existence of eta invariants on singular edge spaces}

We are now in the position to discuss existence of eta invariants
in various configurations involving admissible edge singularities.
We prove existence of eta invariants for allowable essentially self-adjoint even or odd Dirac-type operators $D$
in the incomplete admissible edge setting. Essential self-adjointness is guaranteed by the geometric
Witt condition we have imposed throughout the paper. 
\medskip

In view of the asymptotic expansions obtained in the previous section, 
as well as the exponential decay of $\textup{Tr} \, (D e^{-tD^2})$ for large times, the eta-function 
\begin{align*}
\eta(D,s) = \frac{1}{\Gamma((s+1)/2)}
\int_0^\infty t^{(s-1)/2} \, \textup{Tr} \, (D e^{-tD^2}) dt
\end{align*}
extends to a meromorphic function on $\C$ with the residue at $s=0$
given by the $t^{-\frac{1}{2}}$ coefficient in the short time asymptotics of 
$\textup{Tr} \, (D e^{-tD^2})$, up to some universal proportionality factor. In particular, 
if the asymptotic expansion does not admit $t^{-\frac{1}{2}}$ terms, 
the eta invariant $\eta(D):=\eta(D,s=0)$ is well-defined. Hence just by looking at the 
asymptotics in Theorem \ref{exp1} and studying contributions to the $t^{-\frac{1}{2}}$
and the $t^{-\frac{1}{2}}\log t$ coefficients we obtain the following a priori statement.

\begin{prop}\label{eta-main} Let $(M,g)$ be an incomplete edge space with an admissible
edge metric.  Let $D$ be an allowable even or odd essentially self-adjoint Dirac-type operator. Then
\begin{enumerate}
\item[(i)] Assume that $m$ is even and $(D,b)$ are of same parity, i.e. 
either $D$ and $b$ are both even, or $D$ and $b$ are both odd.
Then the eta invariant $\eta(D)$ is well-defined\footnote{in these cases there are no
$t^{-1/2}$ coefficients in the asymptotics of $\Tr (DH)$.}. \medskip

\item[(ii)] Assume that $m$ is odd, and $(D,b)$ are of same parity. 
Then $\eta(D,s)$ has a simple pole at $s=0$ and the residue is an integral over $M$ of a 
term that is local over $M$ in the sense of Theorem 
\ref{trace-coefficients-flat} \footnote{in these cases there are no $B_*t^{-1/2}$ 
coefficients in the asymptotics of $\Tr (DH)$.}.
\medskip

\item[(iii)] Assume that $m$ is even and $(D,b)$ is of opposite parity, i.e. 
either $D$ is even and $b$ is odd, or $D$ is odd and $b$ is even.
Then $\eta(D,s)$ has a simple pole at $s=0$ and the residue 
is a sum of an integral over $M$ with an integrand that is local over $M$, 
and an integral over the edge $B$ with an integrand that 
is local over $B$ in the sense of Theorem \ref{trace-coefficients-flat}.\medskip

\item[(iv)] Assume that $m$ is odd and $(D,b)$ is of opposite parity. Then the eta function $\eta(D,s)$ 
may admit a second order pole singularity at $s=0$\footnote{in 
these cases we cannot exclude $C_*t^{-1/2}$ coefficients in the asymptotics of $\Tr (DH)$.}.
The Laurent coefficient of $s^{-2}$ in the Laurent expansion of $\eta(D;s)$ at $s=0$ 
is an integral over the edge $B$ with an integrand that is local over $B$ as in \textup{(iii)}.
The residue is of the same structure as the residue in \textup{(iii)}. 
\end{enumerate}

The case where the edge $B = \bigcup\limits_{i=1}^k B_i$ is a union of connected components $B_i$ of dimension $b_i$,
is dealt with a combination of the cases above. \medskip

\end{prop}

In the special case of $D$ being the signature or the spin Dirac operator,
we employ Theorem \ref{exp2} to obtain a much stronger statement; 
this improved statement will be crucial in discussing rho-invariants.

\begin{prop}\label{eta-main2} Let $(M,g)$ be an incomplete edge space with an admissible
edge metric.  Let $D$ be either the signature or the spin Dirac operator. Then
\begin{enumerate}
\item[(i)] the eta invariant $\eta(D)$ is identically zero if $m$ is even.
\item[(ii)] the eta function $\eta(D,s)$ has at most a first order pole singularity at $s=0$, if $m$ is odd.
Then the residue at $s=0$ is local over the edge $B$ in the sense of Theorem \ref{trace-coefficients-flat}.
\end{enumerate}
\end{prop}

\begin{proof}
The statement follows directly from Theorem \ref{exp2}. In case $m$ is even,
triviality of the eta function is a consequence of the spectrum being symmetric around zero.
In case $m$ is odd, we just observe that the  non-trivial contribution to the residue comes
only from $B_\ell$ coefficients that local over $B$ in the sense of Theorem \ref{trace-coefficients-flat}.
We may try to exclude the $B_*t^{-1/2}$ coefficient from the asymptotics of $\Tr DH$ by 
asking $(D,b)$ to be of the same parity. However, on an odd-dimensional manifold, 
the geometric Dirac operators are odd if $b$ is even and vice versa. Hence the residue
is a priori non-zero. 
\end{proof}

\section{Eta invariants on Galois coverings of admissible edge spaces}\label{Galois-edge-section}

\subsection{Geometric preliminaries}\label{subsect:geometric-pre}

Let $\overline{M}$ be a smoothly stratified pseudomanifold of arbitrary depth. 
Consider a Galois covering $\pi: \overline{M}_{\Gamma} \to \overline{M}$
with Galois group $\Gamma$ and  fundamental domain $\mathscr{F}_{\Gamma}$. There is a natural 
way to define a topological stratification on $\overline{M}_{\Gamma}$. 
Decompose $\overline{M}_{\Gamma}$ into the
preimages under $\pi$ of the strata in $ \overline{M}$. 
Surjectivity of $\pi$ ensures that each stratum in the covering is mapped 
surjectively onto the corresponding stratum in $ \overline{M}$. 
Since $\pi$ is  a local homeomorphism, it is 
straightforward to check that  $\overline{M}_{\Gamma}$ and 
its fundamental domain  are again topological 
stratified spaces. \medskip

In fact, more is true: by using these local homeomorphisms we can induce a \emph{smooth}
stratification on $\overline{M}_{\Gamma}$ by simply pulling it up from the base and this argument applies 
in the Whitney as well as the Thom-Mather cases.
It is important to point out that, by definition, the link of a point  
$\tilde{p}\in \overline{M}_{\Gamma}$ is
equal to the link of its image in the base.
Needless to say,  if  $\overline{M}_{\Gamma}$ is the universal covering 
space of $ \overline{M}$, the individual strata 
in $\overline{M}_{\Gamma}$ need 
not be the universal covering of the 
corresponding strata in the base. 
\medskip

We shall apply these remarks
to a depth-one smoothly stratified space. We denote by $\widetilde{M}$ the regular stratum of 
$\overline{M}_{\Gamma}$ and observe that it is a Galois covering of the regular stratum $M$ of 
$\overline{M}$ with fundamental domain $\mathscr{F}$ equal to the regular part of $\mathscr{F}_{\Gamma}$.
Let $g$ be an admissible incomplete edge metric on $M$. We can lift $g$ to the Galois covering
$\widetilde{M}$ where it becomes a $\Gamma$-invariant admissible incomplete 
edge metric $\widetilde{g}$. The edge singularity $\widetilde{B}$ of $\widetilde{M}$
is a Galois covering of $B$ with fundamental domain $\mathscr{F}_B$. 
Moreover, there are isometric embeddings of $\mathscr{F}$ into $M$, and of $\mathscr{F}_B$ into $B$ 
with complements of measure zero.

\subsection{$\Gamma$-Hilbert Schmidt and 
$\Gamma$-trace class operators}\label{subsect:von neumann}
  
In this section we introduce the notion of $\Gamma$-Hilbert Schmidt and 
$\Gamma$-trace class operators in full analogy to Definition \ref{HS-definition},
carried over to the present non-compact setting of Galois coverings.
Our primary sources here are Shubin \cite{Shubin2} and Atiyah \cite{Ati}.
\medskip

We first recall the notion of bounded equivariant linear operators on $L^2(\widetilde{M})$.
Let $\mathcal{N}_\Gamma$ be the Von Neumann algebra 
of the discrete group $\Gamma$; thus, by definition,
$\mathcal{N}_\Gamma$ is the weak closure of the algebra 
$\C\Gamma$ viewed as an algebra of bounded operators
on $\ell^2 (\Gamma)$ by convolution. There is a finite normal 
trace on $\mathcal{N}_\Gamma$ defined
as $\tau_{\, \Gamma} (A)= (A\delta_e,\delta_e)_{\ell^2}$. 
\medskip
  
The von Neumann algebra $\mathscr{A}_\Gamma (\widetilde{M})$ 
associated to the $\Gamma$-Galois covering 
$\widetilde{M}$ is defined, as usual,  as the 
algebra of bounded equivariant linear operators on $L^2(\widetilde{M})$. 
As in the closed case there is an isomorphism 
$L^2 (\widetilde{M})\simeq L^2 (\mathscr{F})\otimes \ell^2 (\Gamma)\equiv L^2 (M)\otimes \ell^2 (\Gamma)$.
Through this isomorphism the Von Neumann algebra $\mathscr{A}_\Gamma (\widetilde{M})$ 
can be shown to be naturally isomorphic
to $\mathcal{B}( L^2 (M))\otimes \mathcal{N}_\Gamma$ and 
we can define through this isomorphism  a  semifinite trace, denoted $\Tr_\Gamma$,
by  considering the tensor product $\Tr\otimes \tau_{\, \Gamma}$. 
Here, $\mathcal{B}( L^2 (M))$ denotes the space of bounded operators. 
Similar definitions can be given when there is a vector bundle 
$Q$ on $M$ or a $\Gamma$-equivariant vector bundle  $\widetilde{Q}$
on $\widetilde{M}$.
 We could now go on and give, abstractly, definitions for the ideal 
of $\Gamma$-Hilbert-Schmitd and $\Gamma$-trace class operators.

\medskip
However, in order to keep the theory of Von Neumann algebras to a minimum,
we prefer to use as  definitions
what are in fact characterizations of $\Gamma$-Hilbert Schmidt and 
$\Gamma$-trace class operators, using 
 Shubin \cite[\S 2.23 Theorems 1. and 3.]{Shubin2} and 
Atiyah \cite[\S 4]{Ati}.

\begin{defn}\label{HS-definition-galois}
Consider a $\Gamma$-equivariant Hermitian vector bundle $\widetilde{Q}$
on $\widetilde{M}$. Denote the space of square integrable sections of $\widetilde{Q}$
by $L^2(\widetilde{M}, \widetilde{Q})$. Consider the von Neumann algebra $\mathscr{A}_\Gamma 
(\widetilde{M}, \widetilde{Q})$ of bounded equivariant linear operators on 
$L^2(\widetilde{M}, \widetilde{Q})$. Then an operator $A \in \mathscr{A}_\Gamma 
(\widetilde{M}, \widetilde{Q})$ with Schwartz kernel $K_A$ is called \medskip
\begin{enumerate}
\item a $\Gamma$-Hilbert-Schmidt operator if $K_A \in L^2(\widetilde{M}\times \mathscr{F}, 
\widetilde{Q} \boxtimes \widetilde{Q}^*)$, or equivalently if $\phi A$ and $A \phi$
are Hilbert Schmidt in the sense of Definition \ref{HS-definition} for any bounded measurable
function $\phi$ with compact support in the resolution $\widetilde{M}_c$.
We denote the space of Hilbert-Schmidt operators by $\calC_2^\Gamma(\widetilde{M},\widetilde{Q})$.
One can show that 
$\calC_2^\Gamma (\widetilde{M},\widetilde{Q})$ is an ideal in $\mathscr{A}_\Gamma 
(\widetilde{M}, \widetilde{Q})$.

\medskip

\item a $\Gamma$-trace class operator if
$A = \sum B_j \circ C_j$ (finite sum) with 
$B_j, C_j \in \calC_2^\Gamma (\widetilde{M},\widetilde{Q})$. We denote the space 
of $\Gamma$-trace class operators by $\calC_1^\Gamma (\widetilde{M},\widetilde{Q}) \subset 
\calC_2^\Gamma (\widetilde{M},\widetilde{Q})$. Thus  $$\calC_1^\Gamma (\widetilde{M},\widetilde{Q}):=
(\calC_2^\Gamma (\widetilde{M},\widetilde{Q}))^2\subset 
\calC_2^\Gamma (\widetilde{M},\widetilde{Q})$$
We define for any $A \in \calC_1^\Gamma 
(\widetilde{M},\widetilde{Q})$ the $\Gamma$-trace in terms of the characteristic function $\phi$
of the fundamental domain $\mathscr{F}$ by 
\begin{equation}\label{trace-definition-galois}
\Tr_\Gamma(A) := \Tr (\phi A \phi) =
\iint\limits_{\mathscr{F} \times \widetilde{M}} \textup{tr}_p \, 
K_B(p,q) K_{C}(q,p) \, \textup{dvol}_{\widetilde{g}} (q) \, \textup{dvol}_{\widetilde{g}} (p).
\end{equation}
\end{enumerate}
These definitions are independent of the choice of the fundamental domain $\mathscr{F} $.
\end{defn}

\begin{remark}\label{HS-remark-galois}
As previously, one might want to replace the inner integral in \eqref{trace-definition-galois}
by $K_A(p,p)$ thus obtaining the $\Gamma$-trace of $A$ by integrating its Schwartz kernel at the
diagonal over the fundamental domain $\mathscr{F}$:
\begin{equation}\label{gammatrace}
\Tr_\Gamma(A) := \Tr (\phi A \phi)=\int_{\mathcal{F}} \textup{tr}_p \,  K_A (p,p)\textup{dvol}_{\widetilde{g}} (p)
\end{equation}
However, the Schwartz kernel of
$A = B \circ C$ with $B, C \in \calC_2(M,E)$ is given only almost everywhere by
\begin{equation}\label{product-galois}
K_A(p,q)= \int_{\widetilde{M}} K_B(p,s) K_{C}(s,q)
\textup{dvol}_{\widetilde{g}} (s).
\end{equation}
In order to claim this identity at the set $\{p=q\}$ of measure zero, 
one needs an additional assumption of continuity of $K_A$
at the diagonal. Otherwise we cannot generally replace the inner integral in 
\eqref{trace-definition-galois} by $K_A(p,p)$ and present the trace of 
$A$ solely by an integral of its Schwartz kernel.
\end{remark}

\subsection{Dirac operators }\label{subsect:dirac-covering}
Let $D$ be an allowable Dirac-type operator on $M$.
We denote the $\Gamma$-equivariant  lift of  $D$ to the Galois covering $\widetilde{M}$ by $\widetilde{D}$.
We have already observed that the link of a singular point
$\widetilde{p} \in \widetilde{M}$ is equal to the link of $\pi(\widetilde{p}) \in M$.
Consequently, the vertical operators induced by $D$ and $\widetilde{D}$ on the links coincide.
Since by Assumption \ref{Witt} the geometric Witt condition is satisfied for $D$, it is also satisfied for $\widetilde{D}$.
We briefly denote by $Q$ and $\widetilde{Q}$ the relevant bundles of Clifford modules
on which these operators act. For each $k\in \N$ we can consider the edge Sobolev spaces 
\begin{equation}\label{Gamma-sobolev}
H^k_e (\widetilde{M},\widetilde{Q})=\{u\in L^2_e (\widetilde{M},\widetilde{Q})\;:\; Pu\in L^2_e (\widetilde{M},\widetilde{Q})\;\;
\forall P\in {\rm Diff}^k_e (\widetilde{M},\widetilde{Q})^\Gamma\}
\end{equation}
where ${\rm Diff}^*_e (\widetilde{M},\widetilde{Q})^\Gamma$ 
is the algebra of $\Gamma$-equivariant edge differential operators
on $\widetilde{M}$ and where the subscript $e$ in $L^2_e$ refers 
to the complete edge metric $\widetilde{g}_e$ associated to $\widetilde{g}$.
Recall that the $L^2$-space for the incomplete edge metric $\widetilde{g}$, $L^2$, 
and $L^2_e$ are obtained one from the
other from the relation $L^2_e=x^{\dim M/2}L^2$.
The {\it incomplete} edge Sobolev spaces, denoted 
$H^k_{ie} (\widetilde{M},\widetilde{Q})$, are defined similarly: 
$H^k_{ie}:=x^{\dim M/2}H^k_e$. \medskip

Incomplete edge differential operators on stratified Galois coverings 
have been  already considered in \cite{signature-package}.
There it is proved that the signature operator  in the geometric Witt case, 
twisted by the Mishchenko bundle associated to $\pi: \overline{M}_{\Gamma} \to 
\overline{M}$ and to the reduced $C^*$-algebra $C^*_r \Gamma$, 
defines an index class in $K_* (C^*_r\Gamma)$, a result that should 
be regarded as a generalisation of the Fredholm property for $D$ itself.
The crucial remark in \cite{signature-package},
used again in \cite{Novikov}, is that the microlocal techniques used 
for $D$ applies equally well for the  Mishchenko-twisted version
of $D$ once we observe that the Galois covering over a 
distinguished neighbourhood $\R^b\times C(F)$ is trivial, see
the proof of Proposition 6.3 in \cite{signature-package}.
In this paper we are  interested in Von Neumann index theory but the crucial remark applies equally well. 
Thus, using the same ideas as in the proof of Proposition 6.3 in \cite{signature-package}
one can prove the following result (see also \cite{Cheeger-spaces}).
  
\begin{prop}\label{D-domains}
The Dirac operator $\widetilde{D}$ satisfies the following properties.
\begin{enumerate}
\item Its maximal domain is given by 
$\dom_{\max}(\widetilde{D}) \subset \bigcap\limits_{\epsilon>0} 
x^{1-\epsilon} H^1_{ie} (\widetilde{M},\widetilde{Q})$;
\item $\dom_{\min}(\widetilde{D})= \dom_{\max}(\widetilde{D})$ so 
that  $\widetilde{D}$ is essentially self-adjoint
\footnote{We will offer a different proof of essential 
self-adjointness of $\widetilde{D}$ below in \S \ref{ess-section}.}; \medskip
\item the domain of the unique self-adjoint  extension of $(\textup{Id} + \widetilde{D}^2)^{-N}$, \\ 
$N>\dim M$, is contained in $\bigcap\limits_{\epsilon>0} x^{1-\epsilon} 
H^{2N}_{ie} (\widetilde{M},\widetilde{Q})$
\end{enumerate}
\end{prop}

From there we conclude with the following 
statement on the trace class property. 

\begin{proposition}\label{tr-cl}
If $N>\dim M/2$, then $(1+\widetilde{D}^2)^{-N}$ is $\Gamma$-trace class. 
Consequently, for each rapidly decreasing function
$f$, the operator $f(\widetilde{D})$ is $\Gamma$-trace class. 
In particular, the operators $e^{-t \widetilde{D}^2}$ and 
$\widetilde{D} e^{-t \widetilde{D}^2}$ are $\Gamma$-trace class.
Moreover, if $\chi_x$ is the characteristic function of $[-x,x]\subset \R$
for any $x\geq 0$, then $E_x:= \chi_x(\widetilde{D})$ is $\Gamma$-trace class
as well.
\end{proposition}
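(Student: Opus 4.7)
My plan is to mirror the structure of Proposition \ref{lidskii-theorem2} from the compact case, with the Hilbert-Schmidt and trace class notions replaced by their $\Gamma$-versions from Definition \ref{HS-definition-galois}, and the integration domain $M\times M$ replaced by $\widetilde{M}\times \mathscr{F}$. Concretely, I would first establish the $\Gamma$-trace class property for the heat operators $e^{-t\widetilde{D}^2}$ and $\widetilde{D} e^{-t\widetilde{D}^2}$, then deduce the resolvent statement via a subordination identity, obtain the case $f\in \mathcal{S}(\mathbb{R})$ from the ideal property of $\calC_1^\Gamma$, and finally treat the non-smooth spectral projector $E_x$ by a separate functional-calculus trick.

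For the heat operators, I would construct the Schwartz kernel $\widetilde{H}$ of $e^{-t\widetilde{D}^2}$ on $\widetilde{M}$ by lifting the microlocal construction of Section \ref{microlocal-section}. Since $\widetilde{g}$ and $\widetilde{D}$ are $\Gamma$-equivariant and the parametrix construction is strictly local near the edge stratum (exactly as exploited in Section \ref{locality-section}), the construction goes through verbatim on $\widetilde{M}$ and produces a $\Gamma$-equivariant polyhomogeneous kernel on the natural heat blow-up of $\mathbb{R}^+\times \widetilde{M}^2$. Combining the near-diagonal polyhomogeneous bounds with Gaussian off-diagonal decay obtained from finite propagation speed for $\cos(s\widetilde{D})$ (available since $\widetilde{D}$ is essentially self-adjoint by Proposition \ref{D-domains}), I would verify that $\widetilde{H}(t,\cdot,\cdot)\in L^2(\widetilde{M}\times \mathscr{F}, \widetilde{Q}\boxtimes \widetilde{Q}^*)$ for every $t>0$. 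By Definition \ref{HS-definition-galois}(1) this gives $e^{-(t/2)\widetilde{D}^2}\in \calC_2^\Gamma$, and then the semigroup identity $e^{-t\widetilde{D}^2}=(e^{-(t/2)\widetilde{D}^2})^2$ yields $e^{-t\widetilde{D}^2}\in \calC_1^\Gamma$; an analogous factorization $\widetilde{D}e^{-t\widetilde{D}^2}=(\widetilde{D}e^{-(t/2)\widetilde{D}^2})\circ e^{-(t/2)\widetilde{D}^2}$ handles the odd case.

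For $(1+\widetilde{D}^2)^{-N}$ with $N>\dim M/2$, I would apply the subordination identity
\[
(1+\widetilde{D}^2)^{-N}=\frac{1}{\Gamma(N)}\int_0^\infty t^{N-1}e^{-t}\,e^{-t\widetilde{D}^2}\,dt
\]
and show convergence of the right-hand side in the $\Gamma$-trace class norm. The short-time estimate $\Tr_\Gamma e^{-t\widetilde{D}^2}=O(t^{-m/2})$ as $t\to 0^+$ follows from locality of the diagonal coefficients (Theorem \ref{trace-coefficients-flat}) together with the fact that $\mathscr{F}$ covers $M$ up to a null set, while exponential decay of $\Tr_\Gamma e^{-t\widetilde{D}^2}$ as $t\to\infty$ gives convergence precisely when $N>m/2$. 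For $f\in \mathcal{S}(\mathbb{R})$ I would then write $f(\widetilde{D})=(1+\widetilde{D}^2)^{-N}\circ g(\widetilde{D})$ with $g(\lambda)=(1+\lambda^2)^N f(\lambda)\in \mathcal{S}(\mathbb{R})\subset L^\infty(\mathbb{R})$, and use that $\calC_1^\Gamma$ is an ideal in $\mathscr{A}_\Gamma(\widetilde{M},\widetilde{Q})$. Finally, for $E_x=\chi_x(\widetilde{D})$ I would use the factorization $E_x=\bigl[E_x(1+\widetilde{D}^2)^N\bigr]\circ (1+\widetilde{D}^2)^{-N}$; the first factor equals $h(\widetilde{D})$ for the bounded function $h(\lambda)=\chi_x(\lambda)(1+\lambda^2)^N$, hence is bounded with norm at most $(1+x^2)^N$, and the second is $\Gamma$-trace class by the previous step.

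The main obstacle I expect is the Gaussian off-diagonal decay of $\widetilde{H}$ needed to ensure $L^2$ integrability on $\widetilde{M}\times \mathscr{F}$ far from the fundamental domain. On smooth manifolds this is standard via Cheeger-Gromov-Taylor, but in the singular edge setting one must verify that finite propagation speed for $\cos(s\widetilde{D})$ remains valid; this in turn rests on essential self-adjointness of $\widetilde{D}$ and hence on the geometric Witt assumption, and should follow by adapting the arguments of \cite{signature-package} and \cite{Cheeger-spaces}. Once the $L^2$ bound on the kernel is in place, everything else is a formal consequence of the ideal structure of $\calC_1^\Gamma$ and of spectral calculus.
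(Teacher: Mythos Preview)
Your approach is broadly correct but follows the opposite order from the paper: you construct the heat kernel on the covering first (via a lifted microlocal parametrix plus finite-propagation Gaussian bounds), then deduce the resolvent statement by subordination. The paper instead starts from the resolvent: Proposition \ref{D-domains}(3) already gives enough Sobolev regularity so that $\phi(1+\widetilde{D}^2)^{-N}\psi$ is ordinary trace class for all compactly supported $\phi,\psi$, and then the analogue of Atiyah's Lemma 4.8 upgrades this to $\Gamma$-trace class directly; the heat operators and $f(\widetilde{D})$ follow by the same factorization you use. The paper's route is cheaper because it completely avoids constructing the heat kernel on the non-compact covering and hence sidesteps the Gaussian off-diagonal estimate you flag as the main obstacle; indeed the paper explicitly remarks (just before Proposition \ref{trace-class-Galois}) that it uses Lesch's framework precisely in order \emph{not} to develop a microlocal heat calculus on $\widetilde{M}$.

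One genuine slip in your argument: you claim ``exponential decay of $\Tr_\Gamma e^{-t\widetilde{D}^2}$ as $t\to\infty$'' to justify convergence of the subordination integral. This is false in general on a Galois covering, since $\widetilde{D}$ may have nontrivial $L^2$-kernel or continuous spectrum accumulating at zero; the $\Gamma$-heat trace is merely bounded and monotone for $t\ge 1$. Fortunately boundedness is all you need, because the factor $e^{-t}$ in the subordination formula already forces integrability at infinity, and the condition $N>m/2$ governs only the small-$t$ behaviour. So the subordination route still works once you correct this sentence. Your treatment of $E_x$ is fine and essentially equivalent to the paper's (which writes $\chi_x=f\circ\chi_x$ for a Schwartz $f$ equal to $1$ on $[-x,x]$).
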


\begin{proof}
From the third point above we deduce that
$\widetilde{D}$ has the property that 
\begin{align}\label{trace-class-gamma}
\phi (\textup{Id} + \widetilde{D}^2)^{-N})\psi \ \textup{is trace class in} \ L^2
\end{align}
for each $\phi$ and $\psi$ of compact support.
By the analogue in our context of \cite[Lemma 4.8]{Atiyah-VN} we infer that $(1+\widetilde{D}^2)^{-N}$ 
is $\Gamma$-trace class. The proof that $f(\widetilde{D})$ is $\Gamma$-trace class now proceed as in the 
proof of Proposition \ref{lidskii-theorem2}. Finally, note that we may write $\chi_x = f \circ  \chi_x$ for an 
appropriately chosen rapidly decreasing function $f$. Thus $E_x$ is $\Gamma$-trace class because 
$f(\widetilde{D})$ is $\Gamma$-trace class and $\chi_x (\widetilde{D})$ is bounded.
\end{proof}

As a consequence, the operators $e^{-t \widetilde{D}^2}$ and 
$\widetilde{D} e^{-t \widetilde{D}^2}$ are $\Gamma$-trace class.
However, we point out that the integral representation of their corresponding
traces requires an additional argument which is worked out below in \S 
\ref{heat-galois}.

\subsection{On essential self-adjointness}\label{ess-section}

Recall that by Assumption \ref{Witt} the geometric Witt condition is satisfied for $D$
and hence it is also satisfied for $\widetilde{D}$.
For the sake of completeness we offer a different proof of essential self-adjointness.

\begin{prop}
The operator $\widetilde{D}$ is essentially self-adjoint in $L^2(\widetilde{M}, Q)$.
\end{prop}

\begin{proof} We provide for brevity only a rough idea of the proof, since the arguments involved
are classical by now. First of all note that elements of $\dom_{\max}(\widetilde{D})$ admit a weak 
asymptotic expansion at the edges of $\widetilde{M}$. This can be done exactly as in Mazzeo 
\cite[Theorem 7.3]{Maz:ETO}, where the expansion is obtained using only Mellin transformation and
applies generally to a non-compact setting as well. \medskip

We then use the fact that elements in $\dom_{\max}(\widetilde{D})$ may be approximated 
in the graph norm by elements in the maximal domain that in addition are polyhomogeneous
in their asymptotics at the edge. This is done precisely as in the proof of Vertman 
\cite[Proposition 3.2]{Ver-Mooers}, cf. also Mazzeo-Vertman \cite{MazVer}. This in turn 
can be used to show that $\dom_{\min}(\widetilde{D}) = \dom_{\max}(\widetilde{D})$ assuming the
geometric Witt condition, cf. \cite[Lemma 2.4]{MazVer} and \cite[Proposition 3.2]{Ver-Mooers}.
\end{proof}

\subsection{Heat kernel on Galois coverings and 
$\Gamma$-trace integral formula}\label{heat-galois}

We define the heat operator $e^{-t\widetilde{D}^2}$ for $\widetilde{D}^2$ using functional calculus.
By Proposition \ref{tr-cl} we know that $e^{-t\widetilde{D}^2}$ is $\Gamma$-trace class.
Instead of developing a microlocal heat calculus for singular Galois coverings as in 
\cite{MazVer}, we employ the framework worked out by Lesch \cite{Lesch-singular}
and prove the following result.

\begin{prop}\label{trace-class-Galois}
Consider any functions $\phi, \psi$ that are smooth on the resolution of $M$, which is a
compact manifold $M_c$ with boundary $Y$. The resolution $\widetilde{M}_c$ of $\widetilde{M}$ is a
Galois covering of $M_c$. Lift $\phi, \psi$ to functions on $\widetilde{M}_c$ with compact support in the fundamental domain.
Then the operators $\phi e^{-t \widetilde{D}^2} \psi$ and $\phi (\widetilde{D} e^{-t \widetilde{D}^2}) \psi$ are trace 
class and, under the isometric embedding $\mathscr{F} \hookrightarrow M$, the trace norms satisfy the following
asymptotic behaviour for any $N \in \N$ and any $k\in \N$
\begin{equation}\label{trace-norm}
\begin{split}
&\| \phi (e^{-t \widetilde{D}^2} - e^{-tD^2}) \psi \|_{\textup{tr}} = O(t^N), \ t \to 0, \\
&\| \phi (\widetilde{D}^k e^{-t \widetilde{D}^2} - D^k e^{-tD^2}) \psi \|_{\textup{tr}} = O(t^N), \ t \to 0.
\end{split}
\end{equation}
\end{prop}

\begin{proof} 
We know that the fundamental domain $\mathscr{F}$ 
can be embedded  isometrically into the base $M$,
with complement of measure 0. Moreover  $D$ and 
$\widetilde{D}$ coincide over $M$ in the sense of \cite[Definition 2.8]{Lesch-singular}. 
Here, essential self-adjointness of $D$ and $\widetilde{D}$ is crucial in order to ensure that the condition 
\cite[(2.14)]{Lesch-singular} holds. Recall that 
\begin{align}
(\textup{Id} + D^2)^{-m-1} \ \textup{is trace class in} \ L^2(M, Q).
\end{align}
Thus $D$ satisfies the \emph{Schatten property} introduced in \cite[Lemma 2.5]{Lesch-singular}. 
We know that the lifted operator $\widetilde{D}$ satisfies a similar 
Schatten property, namely \begin{equation*}
\phi (\textup{Id} + \widetilde{D}^2)^{-m-1})\psi \ \textup{is trace class in} \ L^2(\widetilde{M}, Q)
\end{equation*}
Now the statement follows from \cite[Lemma 2.10]{Lesch-singular}.
\end{proof}

\begin{cor}\label{smooth-galois}
The Schwartz kernels of $e^{-t \widetilde{D}^2}$ and 
$\widetilde{D} e^{-t \widetilde{D}^2}$ are smooth\footnote{Note that we do not claim smoothness 
of the integral kernels if $\widetilde{M}$ is replaced with its resolution, which is 
a smooth manifold with boundary.} in $\widetilde{M}\times \widetilde{M}$.  
\end{cor}

\begin{proof}
The previous Proposition \ref{trace-class-Galois} in particular implies that for any 
$\phi, \psi \in C^\infty(\mathscr{F})$ with compact support away from the edge singularities,
the operator $\phi (\widetilde{D}^k e^{-t \widetilde{D}^2}) \psi$ is a bounded operator on
$L^2(U)$, where $U:=\supp \phi \cup \supp \psi$. Since $k\in \N$ is arbitrary we conclude
by elliptic regularity that $\phi (\widetilde{D}^k e^{-t \widetilde{D}^2}) \psi$ is smoothing and 
hence admits a smooth integral kernel. Consequently, the Schwartz kernels of 
$e^{-t \widetilde{D}^2}$ and $\widetilde{D} e^{-t \widetilde{D}^2}$ are indeed smooth
in the open smooth manifold $\mathscr{F} \times \mathscr{F}$ and hence also on
$\widetilde{M}\times \widetilde{M}$.
\end{proof}

We conclude the subsection with an integral representation of the $\Gamma$-trace in terms
of the Schwartz kernel for the operators $e^{-t \widetilde{D}^2}$ and 
$\widetilde{D} e^{-t \widetilde{D}^2}$. We proceed using the argument of 
Proposition \ref{lidskii-theorem} and establish the following result.

\begin{proposition}\label{lidskii-theorem-galois}
The operators $e^{-t \widetilde{D}^2}$ and 
$\widetilde{D} e^{-t \widetilde{D}^2}$ are $\Gamma$-trace class 
and their traces can be represented by integrals of their corresponding
Schwartz kernels
\begin{equation}
\begin{split}
\Tr_\Gamma \left(e^{-t \widetilde{D}^2}\right)&=
\int_{\mathscr{F}} \textup{tr}_p \, \left(e^{-t \widetilde{D}^2}\right) (p, p) \textup{dvol} (p) , \\
\Tr_\Gamma \left(\widetilde{D} e^{-t \widetilde{D}^2}\right)&=
\int_{\mathscr{F}}  \textup{tr}_p \, \left(\widetilde{D} e^{-t \widetilde{D}^2}\right) 
(p, p) \textup{dvol} (p).
\end{split} 
\end{equation}
\end{proposition}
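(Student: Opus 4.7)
The plan is to mirror the argument for Proposition \ref{lidskii-theorem}, replacing the ordinary trace with the $\Gamma$-trace and invoking the non-compact ingredients developed in this section. The $\Gamma$-trace class property is already established in Proposition \ref{tr-cl}, so the only substantive content is the integral representation of $\Tr_\Gamma$ in terms of the Schwartz kernels restricted to the diagonal.

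First I would write both operators as products of $\Gamma$-Hilbert--Schmidt operators via the semigroup property:
\begin{align*}
e^{-t\widetilde{D}^2} &= \bigl(e^{-\tfrac{t}{2}\widetilde{D}^2}\bigr)\bigl(e^{-\tfrac{t}{2}\widetilde{D}^2}\bigr), \\
\widetilde{D}\, e^{-t\widetilde{D}^2} &= \bigl(\widetilde{D}\, e^{-\tfrac{t}{2}\widetilde{D}^2}\bigr)\bigl(e^{-\tfrac{t}{2}\widetilde{D}^2}\bigr).
\end{align*}
Each factor is $\Gamma$-trace class, hence a fortiori $\Gamma$-Hilbert--Schmidt, by Proposition \ref{tr-cl} applied to the rapidly decreasing functions $f(x)=e^{-tx^2/2}$ and $f(x)=xe^{-tx^2/2}$. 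This realizes $e^{-t\widetilde{D}^2}$ and $\widetilde{D}\, e^{-t\widetilde{D}^2}$ as elements of $(\calC_2^\Gamma)^2 = \calC_1^\Gamma(\widetilde{M},\widetilde{Q})$ in the sense of Definition \ref{HS-definition-galois}.

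Next I would apply the trace formula \eqref{trace-definition-galois} to these factorizations, which expresses each $\Gamma$-trace as a double integral over $\mathscr{F}\times\widetilde{M}$ of the product of the two Hilbert--Schmidt Schwartz kernels. By the composition identity \eqref{product-galois}, performing the inner integration in $q$ produces the Schwartz kernel of the full product operator evaluated at $(p,p)$, valid for almost every $p\in\mathscr{F}$.

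The delicate point flagged in Remark \ref{HS-remark-galois} is whether this almost-everywhere identity can be transported to the measure-zero diagonal $\{p=q\}$. This is precisely what Corollary \ref{smooth-galois} supplies: the Schwartz kernels of $e^{-t\widetilde{D}^2}$ and $\widetilde{D}\, e^{-t\widetilde{D}^2}$ are smooth on $\widetilde{M}\times\widetilde{M}$, hence continuous across the diagonal, so the pointwise identification extends by continuity to all $p\in\mathscr{F}$. The double integral then collapses to the claimed single integral of the kernel over $\mathscr{F}$. The main conceptual obstacle --- failure of \eqref{product-galois} on the diagonal --- is thus already disposed of by the smoothness statement of Corollary \ref{smooth-galois}, and what remains is only bookkeeping between Definition \ref{HS-definition-galois}, the semigroup factorization, and the smoothness of the heat kernels.
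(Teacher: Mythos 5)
Your proposal is correct and follows essentially the same approach as the paper's proof: factor each operator via the semigroup property into two $\Gamma$-Hilbert--Schmidt halves, apply the trace formula of Definition~\ref{HS-definition-galois}, and invoke Corollary~\ref{smooth-galois} to upgrade the almost-everywhere identity \eqref{product-galois} to the diagonal by continuity. The paper states this more tersely, but the logical content and the key citations (\eqref{trace-definition-galois}, Remark~\ref{HS-remark-galois}, Corollary~\ref{smooth-galois}) are identical.
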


\begin{proof}
Using the semi-group
property of the heat operator, we can write the operators $e^{-t \widetilde{D}^2}$ and 
$\widetilde{D} e^{-t \widetilde{D}^2}$ as products of two $\Gamma$-Hilbert Schmidt
operators
\begin{equation}
\widetilde{D} e^{-t \widetilde{D}^2} = (\widetilde{D} e^{-\frac{t}{2} \widetilde{D}^2})
( e^{-\frac{t}{2} \widetilde{D}^2}), \quad 
e^{-t \widetilde{D}^2} = (e^{-\frac{t}{2} \widetilde{D}^2})
( e^{-\frac{t}{2} \widetilde{D}^2}).
\end{equation}
In view of the integral formula in \eqref{trace-definition-galois}
and Remark \ref{HS-remark-galois}, it suffices to prove continuity of 
the Schwartz kernels of $e^{-t \widetilde{D}^2}$ and 
$\widetilde{D} e^{-t \widetilde{D}^2}$ at the diagonal of $\mathscr{F}\times \mathscr{F}$.
This follows from Corollary \ref{smooth-galois}.
\end{proof}

\subsection{Eta invariants on Galois coverings with edge singularities}\label{spectral-calculus}
Following  
Ramachandran \cite[(3.1.13)]{Ram} we know that there exists a tempered measure $m_\Gamma$
on $\R$ such that $\int_\R (1+|x|^\ell)^{-1} dm_\Gamma$ is finite for some positive integer $\ell \in \N$ and
\begin{align*}
&\textup{Tr}_\Gamma \, (\widetilde{D} e^{-t \widetilde{D}^2})
= \int_{\mathscr{F}} \textup{tr}_p(\widetilde{D} e^{-t \widetilde{D}^2}) (p, p) \textup{dvol} (p) 
= \int_\R x e^{-tx^2} dm_\Gamma(x),
\\ &\textup{Tr}_\Gamma \, (e^{-t \widetilde{D}^2}) = \int_{\mathscr{F}} \textup{tr}_p (e^{-t \widetilde{D}^2}) (p, p) \textup{dvol} (p)
= \int_\R e^{-tx^2} dm_\Gamma(x),
\end{align*}
The measure $m_\Gamma$ is defined by the functional  
$$\mathcal{S}(\R)\ni f\to \textup{tr}_\Gamma (f(\widetilde{D}))$$
which we know is  well defined. We can now estimate exactly as in Ramachandran 
\cite[(3.1.17)]{Ram} 
\begin{equation}\label{eta-est}
\begin{split}
&\left| \frac{1}{\Gamma(1/2)} \int_1^\infty t^{-1/2} \, \textup{tr}_\Gamma \, (\widetilde{D} e^{-t \widetilde{D}^2}) dt \right|
\\ &\leq \frac{1}{\Gamma(1/2)} \int_\R |x| e^{-x^2} \left( \int_1^\infty t^{-1/2} \, e^{-(t-1)x^2}  dt \right)  \, dm_\Gamma(x) 
\\ &\leq  \int_\R e^{-x^2} dm_\Gamma(x) = \textup{tr}_\Gamma \, (e^{- \widetilde{D}^2}).
\end{split}
\end{equation}
We can then associate to $\widetilde{D}$ an $\Gamma$ eta function 
\begin{align*}
\eta_\Gamma (\widetilde{D},s) = \frac{1}{\Gamma((s+1)/2)}
\left( \int_0^1 +  \int_1^\infty \right) t^{(s-1)/2} \, \textup{tr}_\Gamma \, (\widetilde{D} e^{-t \widetilde{D}^2}) dt,
\end{align*}
where the integral over $[1,\infty)$ is bounded in a neighborhood of $s=0$ by the 
estimate \eqref{eta-est}. The integral over $(0,1)$ is well-defined for $\Re(s) > (m+1)/2$
and admits a meromorphic extension to $\C$ by the short time asymptotics of 
$\textup{tr}_\Gamma \, (\widetilde{D} e^{-t \widetilde{D}^2})$ which follows from
Proposition \ref{trace-class-Galois} and Theorem \ref{exp1}. This yields the following a priori statement,
which parallels the statement of Propositions \ref{eta-main} and \ref{eta-main2}.

\begin{prop}\label{Galois-main-eta} 
\begin{enumerate}
\item[(i)] Assume that $m$ is even, and $(\widetilde{D},b)$ are of same parity,
i.e. either $\widetilde{D}$ and $b$ are both even, or $\widetilde{D}$ and $b$ are both odd.
Then the $\Gamma$-eta invariant $\eta_\Gamma(\widetilde{D})$ is well-defined.  \medskip

\item[(ii)] Assume that $m$ is odd, and $(\widetilde{D},b)$ are of same parity.
Then $\Gamma$-eta function $\eta_\Gamma(\widetilde{D},s)$ has a simple pole at $s=0$ with the residue
being local over $M$ in the sense of Theorem \ref{trace-coefficients-flat}.
\medskip

\item[(iii)] Assume that $m$ is even and $(\widetilde{D},b)$ is of opposite parity, i.e. 
either $\widetilde{D}$ is even and $b$ is odd, or $\widetilde{D}$ is odd and $b$ is even.
Then the $\Gamma$-eta function $\eta_\Gamma(\widetilde{D},s)$ has a simple pole at $s=0$ 
with the residue being a sum of an integral over $\mathscr{F}$ with an integrand that is local over $\mathscr{F}$,
and an integral over $\mathscr{F}_B$ with an integrand that is local over the edge $\mathscr{F}_B$
in the sense of Theorem \ref{trace-coefficients-flat}. \medskip

\item[(iv)] Assume that $m$ is odd and $(\widetilde{D},b)$ is of opposite parity.
Then the $\Gamma$-eta function $\eta_\Gamma(\widetilde{D},s)$ 
may admit a second order pole singularity
at $s=0$. The Laurent coefficient of $s^{-2}$ in the Laurent expansion of $\eta_\Gamma(\widetilde{D},s)$ at $s=0$ 
is an integral over $\mathscr{F}_B$ with an integrand that is local over the edge $\mathscr{F}_B$ as in \textup{(iii)}.
The residue is of the same structure as the residue in \textup{(iii)}. \end{enumerate}

The case where the edge $B = \bigcup\limits_{i=1}^k B_i$ is a union of connected components $B_i$ of dimension $b_i$,
is dealt with a combination of the cases above. \medskip

\end{prop}

In the special case of $\widetilde{D}$ being the signature or the spin Dirac operator,
we employ Proposition \ref{trace-class-Galois} and Theorem \ref{exp2} to obtain a much stronger statement; 
as for the case of Proposition \ref{eta-main2}, this improved statement will be important when
we will discuss the Cheeger-Gromov rho invariant.

\begin{prop}\label{eta-main-Galois2}
Let $\widetilde{D}$ be either the signature or the spin Dirac operator. Then
\begin{enumerate}
\item[(i)] the $\Gamma$-eta invariant $\eta_\Gamma (\widetilde{D})$ is identically zero if $m$ is even.
\item[(ii)] the $\Gamma$-eta invariant $\eta_\Gamma (\widetilde{D},s)$ has at most a first order pole singularity at $s=0$, if $m$ is odd.
Then the residue at $s=0$ is local over $B$ in the sense of Theorem \ref{trace-coefficients-flat}.
\end{enumerate}
\end{prop}

\section{The index theorem for manifolds with edge singular boundary}\label{index-section}

Let $(M,g)$ be a compact incomplete manifold with 
edge singularities and an admissible Riemannian metric.
Assume $M$ is boundary of another compact admissible edge manifold $X$.
We  equip $X$ with a product metric $(du^2+g)$ near the boundary, where 
$u\in [0,\varepsilon)$ is the inward normal coordinate. 
Consider the closed double manifold $X_d = X\cup_M (-X)$. By the product assumption, 
the Riemannian metric on $X$ extends smoothly to a Riemannian metric on the closed double. 
\medskip

Consider an allowable Dirac-type operator $\mathbb{D}$ acting between sections of Hermitian vector
bundles $E$ and $F$ over $X$. Assume that over the collar $M\times [0,\varepsilon)$ of the boundary, 
$\mathbb{D}$ takes a special form 
\begin{align}\label{boundary}
\mathbb{D} = \sigma \left( \frac{\partial}{\partial u} + D \right),
\end{align}
where $u\in [0,\varepsilon)$ is the inward normal coordinate and $\sigma$ is a bundle
isomorphism $E\restriction M \to F \restriction M$. Here, the tangential operator $D$ is a formally self-adjoint 
operator acting on sections of $E \restriction M$. Consider its closed 
double $X_d$ and, assuming that the vector bundles $E$ and $F$ are product near the boundary, extend the bundles $(E,F)$ and the Dirac operator 
$\mathbb{D}$ canonically to $X_d$. Assume that the Dirac operator $\mathbb{D}$
extended to the closed double $X_d$ satisfies the geometric Witt condition
and is even or odd. We will continue under this assumption throughout the paper. 

\subsection{Index theory for manifolds with edge singular boundary}

We fix a closed domain of $\mathbb{D}$ by putting Atiyah-Patodi-Singer boundary 
conditions $P_+(D)$ at $\partial X=M$, defined in terms of the positive spectral projection of $D$. 
We do not need to impose boundary conditions at the singular strata of $X$ due to the geometric Witt condition. 
More precisely, we single out the subspace of elements in $\dom_{\max}(\mathbb{D})$ that are 
smooth up to the boundary. We denote them by $\dom^\infty_{\max}(\mathbb{D})$. 
This defines the core domain
\begin{align}\label{core-domain}
\dom^c_+(\mathbb{D}) := \{u \in \dom_{\max}^\infty(\mathbb{D}) \mid (P_+(D)) u|_{\partial X} = 0\}.
\end{align}
We fix the closed extension of $\mathbb{D}$ with domain $\dom(\mathbb{D})$ defined as the 
graph-closure of the core domain $\dom^c_+(\mathbb{D})$ in $L^2(X, E^*\oplus F)$.

\begin{prop}\label{index-trace}
$\mathbb{D}$ is Fredholm with index 
\begin{align*}
\textup{index} \, \mathbb{D} = \textup{Tr} \, e^{-t\mathbb{D}^*\mathbb{D}} 
- \textup{Tr} \, e^{-t\mathbb{D}\mathbb{D}^*}. 
\end{align*}
\end{prop}

\begin{proof}
Consider first the model situation with $X_0:=M\times [0,\infty)$.
We define a Dirac operator $\mathbb{D}_0$ on $X_0$ by \eqref{boundary}.
As in case of $\mathbb{D}$, we fix a closed domain for $\mathbb{D}_0$ by putting 
Atiyah-Patodi-Singer boundary conditions $P_+(D)$ at $\partial X_0=M$. 
Since $D$ is essentially self-adjoint with discrete spectrum, the heat kernels $H_1$
and $H_2$ for $\mathbb{D}^*_0\mathbb{D}_0$ and $\mathbb{D}_0\mathbb{D}^*_0$,
respectively, can be constructed explicitly using spectral eigenspace decomposition on $M$, as in 
\cite[p. 10-11]{APSa}. Writing $(s,p), (s',p')\in M\times [0,\infty)$ for two copies of coordinates 
on $X_0$, we infer from \cite[(2.20)]{APSa} that the heat kernels $H_{j}(t, s, p, s', p'), j=1,2,$ are bounded by
\begin{align}\label{estimate-APS}
\frac{3}{2\sqrt{\pi t}} \left(\textup{tr} \, e^{-\frac{t}{2}D^2} (p, p) + 
\textup{tr} \, e^{-\frac{t}{2}D^2} (p', p')\right) e^{-\frac{(s-s')^2}{4t}}.
\end{align}
From here one easily concludes that for any smooth cutoff functions 
$\phi, \psi \in C^\infty(X_0)$ with compact support the kernels $\phi H_j \psi, j=1,2,$ are
Hilbert Schmidt in $L^2(X_0)$. \medskip

The heat kernels $H_1, H_2$ solve the heat equation for $\mathbb{D}^*\mathbb{D}$ 
and $\mathbb{D}\mathbb{D}^*$ near the boundary $\partial X$, respectively, and can be corrected 
to an exact solution using the heat kernels from the interior of $X$ in the usual way. The heat kernel 
parametrices in the interior of $X$ may be constructed microlocally in the same way as in 
\S \ref{microlocal-section}, and are Hilbert Schmidt for $t>0$ when multiplied from both sides 
with smooth cutoff functions that are compactly supported in the interior of $X$. \medskip

Then the heat operators for $\mathbb{D}^*\mathbb{D}$ and 
$\mathbb{D}\mathbb{D}^*$ must be Hilbert Schmidt
in $L^2(X)$ for $t>0$. Consequently the heat operators
for $\mathbb{D}^*\mathbb{D}$ and $\mathbb{D}\mathbb{D}^*$
are trace class by the semigroup property of the heat kernel. 
Thus both operators have discrete spectrum with finite multiplicities by the spectral 
theorem for compact operators. Hence $\mathbb{D}$ and $\mathbb{D}^*$ have finite dimensional kernels
and $\mathbb{D}$ is Fredholm. The formula for index now follows. 
\end{proof}

\subsection{Eta invariants on singular boundaries of edge spaces}\label{section-eta-bdry}

Now we employ an argument by Atiyah-Patodi-Singer to prove well-definement of the
eta invariant of the tangential operator $D$ appearing in the formula \eqref{boundary}, 
under certain additional assumptions. 

\begin{thm}\label{eta-exists}
Assume that $X$ satisfies the geometric Witt condition and that for the dimension $b$ of each edge in $X$, 
at least one of the numbers $(m+1-b)$ and $b$ is odd. Then the eta-invariant $\eta(D)$
of the tangential operator $D$ in \eqref{boundary} is well-defined. \footnote{
Note that this result is stronger than the one stated in Propositions \ref{eta-main} and \ref{eta-main2}.}
\end{thm}
 
\begin{proof}
We follow the argument of Atiyah-Patodi-Singer, which we also employ below in the proof
of Theorem \ref{index-main}. We still write $\mathbb{D}$ for the Dirac operator on the closed double.
Evenness or oddness of $\mathbb{D}$ guarantees that the heat kernels 
$\exp (-t\mathbb{D}^*\mathbb{D})$ and $\exp (-t\mathbb{D}\mathbb{D}^*)$
both lie in the even subcalculus. In \cite[(3.4)]{APSa} the authors define a function 
\begin{align}
F(q,t) := \textup{tr} \left(\exp (-t\mathbb{D}^*\mathbb{D}) - 
\exp (-t\mathbb{D}\mathbb{D}^*)\right) (q,q)
\end{align}
on the double manifold $q \in X_d$. 
The short time asymptotic expansion of $F(q,t)$ follows from \eqref{H-exp}
and in particular obtain ($b_i$ denote the dimensions of the various
edge singular strata in the double manifold $X_d$)
\begin{equation}\label{F-exp}
F(t) := \int_{X} F(q,t) \sim_{t\to 0} \sum_{\ell = 0}^\infty A'_\ell t^{\ell - \frac{m+1}{2}} 
+ \sum_i \sum_{\ell = 0}^\infty B'_\ell t^{\ell - \frac{b_i}{2}}
+ \sum_i \sum_{\ell \in \mathfrak{I}_i} C'_\ell t^{\ell - \frac{b_i}{2}} \log t.
\end{equation}
where $\mathfrak{I}_i = \varnothing$ if $(m+1-b_i)$ is odd and 
$\mathfrak{I}_i=\N_0$ if $(m+1-b_i)$ is even. Using the notation introduced in the 
proof of Proposition \ref{index-trace}, we also define according to \cite[(2.23)]{APSa}
\begin{align}
K(t):= \int_0^\infty \int_M \textup{tr} \left(\exp (-t\mathbb{D}_0^*\mathbb{D}_0) - 
\exp (-t\mathbb{D}_0\mathbb{D}_0^*)\right) (s,p, s,p) \, ds \, \textup{dvol}_g(p). 
\end{align}
Duhamel principle implies in view of Proposition \ref{index-trace} that for any 
$N\in \N$
\begin{align}\label{F-duhamel}
\textup{index} \, \mathbb{D} = \textup{Tr} \, e^{-t\mathbb{D}^*\mathbb{D}} 
- \textup{Tr} \, e^{-t\mathbb{D}\mathbb{D}^*} = K(t) + F(t) + O(t^N), \ t \to 0.
\end{align}
In this context, the central observation of Atiyah-Patodi-Singer in \cite[(2.25)]{APSa} is that 
\begin{align}
\int_0^\infty \left( K(t) + \frac{1}{2} \dim \ker D \right) t^{s-1} dt = 
- \frac{\Gamma \left(s+\frac{1}{2}\right)}{2s\sqrt{\pi}} \eta(D, 2s).
\end{align}
This relation implies that the residue of $\eta(D, 2s)$ at $s=0$
is given by twice the coefficient of $\log(t)$ in the short time asymptotics
of $K(t)$. The short time asymptotics of $K(t)$ follows from \eqref{F-exp}
by the relation \eqref{F-duhamel}. Consequently, we conclude
\begin{align}
\res\limits_{s=0} \eta(D,2s) = - 2 \sum_i C'_{\frac{b_i}{2}}.
\end{align}
The assumptions on the dimensions $b_i$ imply that the coefficients 
$C'_{\frac{b_i}{2}}$ are always trivial and hence the eta invariant
$\eta(D)$ is well-defined. 
\end{proof}

We conclude with a remark on the singularity structures allowed in $X$.
In fact, we may allow $X$ to admit a certain class of iterated higher order singularities. 
More precisely, $X$ may admit an isolated conical singularity with an exact conical metric 
and the link equal a simple edge space, or more generally be a stratified space
with an essentially self-adjoint Dirac operator $D$ with discrete spectrum as assumed in 
\S \ref{iterated-heat-trace}. Then, the heat kernel near such a singularity is given explicitly in terms of Bessel
functions, as explained in \S \ref{iterated-heat-trace}. Then \eqref{H-exp} and in particular \eqref{F-exp} 
still holds without any $C'_*$ coefficients coming from the conical singularity.  
Hence the proof of Theorem \ref{eta-exists} above carries over to this slightly extended setting, 
if the Dirac operator $\mathbb{D}$ is geometric, i.e. if $\mathbb{D}^*\mathbb{D}$ and $\mathbb{D}\mathbb{D}^*$ are the
direct sum components of the Hodge de Rham or the spin Laplacians. Note that general allowable Dirac-type operators
cannot be included, since in this case the corresponding heat kernels cannot be described explicitly in terms of Bessel
functions. 
 \medskip

This remark has the following non-trivial consequence.
First notice that such a space $X$ with such an additional isolated conical singularity and boundary given by $M$
always exists by setting $X=\mathscr{C}(M)$ where the cone extends from $\{0\}$ (the tip) to $3$, the boundary.
Thus  $X^{{\rm reg}}= (0,3] \times M$ with a smooth Riemannian metric given by $(du^2+ u^2 g)$
over $(0,1] \times M$ and by $(du^2 + g)$ over $(2, 3] \times M$.  The boundary of $X$ 
is precisely $M$ and the metric is by construction a product near the boundary and 
a straight conic metric in $(0,1]\times M$.\medskip

If the Dirac operator $D$ is equal to the signature or the spin operator, 
then there exists a geometric $\mathbb{D}$ on $X$. We need to argue in what way 
the geometric Witt condition of $D$ on $M$  implies the geometric Witt condition for $\mathbb{D}$
on $X$. First, however, we need to make sense of the geometric Witt condition
on a stratified space of depth $>$ 1. The cohomological and geometric Witt condition for the signature
operator are well-known for stratified spaces of depth $>$ 1, see \cite{signature-package};
the geometric Witt condition in the spin case for spin  stratified spaces of depth $>$ 1
would inductively require  the spectrum of the operator on all of the links
to have  empty intersection with the interval $(-1/2, 1/2)$. We shall not give this definition here (it 
clearly requires the precise definition of the domains along the links) but content ourselves with the present very simple case: thus we simply require that $(M,g)$, the link of the tip of the cone,
has a Dirac operator which is essentially self-adjoint and satisfies the geometric Witt condition 
and that the operator induced on the links associated to the singular points $\{(b,u)\in B\times (0,3]\}$,
these links are closed compact manifolds $F$ endowed with a Riemmannian metric,
also satisfy the geometric Witt condition.

\medskip

Now, if $M$ is odd dimensional, and satisfies the cohomological Witt condition, then so does $X$.
Indeed, as already remarked, $X$ has two singular strata, the tip of the cone, with link $M$ itself, and the set 
$\{(b,t), b\in B, t\in (0,3]\}$, with link $F$ (the link of $b\in B$ in $M$). By assumption $F$ is either
of odd dimension or of even dimension with no cohomology in middle degree; moreover $M$ is odd 
dimensional and therefore Witt as a depth-one stratified space. Hence, all the links of $X$
satisfy the cohomological Witt condition, as stated.
If $M$ satisfies the geometric Witt condition, 
then it certainly satisfies the cohomological Witt condition
and hence we know that $X$ satisfies the cohomological Witt condition;
thus, by suitably scaling the metric 
on the cross section, $X$ satisfies the geometric Witt condition too.  Summarizing: if $(M,g)$
satisfies the geometric Witt condition for the signature operator, then, up to scaling of $g$, so does $X$.
\medskip

In the setting of the spin Dirac operator $D$ we argue as follows. First of all, 
as already remarked, we need to assume that the singular stratum $B$ in $M$ is spin. We fix a spin structure
associated to the horizontal metric on $B$ and consequently get a unique spin structure
for the vertical tangent bundle of $\partial M \to B$ endowed with the vertical 
Riemmannian metric. By Lichnerowicz formula
and by  \cite[Theorem 1.3]{Albin-Jesse}, a non-negative
somewhere positive scalar curvature on $(M,g)$ ensures that the Dirac operator $D$
 on $(M,g)$ satisfies the geometric Witt condition and, in addition, that the kernel of $D$ is trivial, see \cite[Theorem 1.3]{Albin-Jesse}. 
Notice that the scalar curvature might be singular
as a function of the radial variable along the cone over a point $b \in B$.
Assume now that 
the scalar curvature on $(M,g)$ 
is not only positive but in fact  bounded away from zero:
there exists  a positive real number $k_M> 0$ such that the scalar curvature is greater or equal to  $k_M$
on every point of $M$. Then by a suitable scaling of the metric and by Lichnerowicz formula
we can arrange that the intersection of the $L^2$-spectrum of $D_M$ with $(-1/2,1/2)$ is empty.
This means that by making $k_M$  large enough through the scaling,  we can ensure 
the geometric Witt condition  for the link associated to  the tip of the cone in $X$ (which is $M$).
By our assumption, the geometric Witt condition is also satisfied on the links of the stratum
 $\{(b,t), b\in B, t\in (0,3]\}$. 
Note that the scaling of $g$ does not change the eta invariant of $D_M$.
 Consequently, we obtain the following 
corollary by using such specific $X$.

\begin{cor}\label{eta-exists-geometric}
Assume that $M$ is odd dimensional with each edge in $M$ of even dimension. 
Let $D$ be the signature operator, satisfying the 
geometric Witt condition. Then the eta-invariant $\eta(D)$  is well-defined. If $D$ is the spin Dirac operator
satisfying the geometric Witt condition and the scalar curvature on $M$ 
is positive and bounded away from zero, then the eta-invariant $\eta(D)$  is well-defined.
\end{cor}

\begin{proof}
In view of the preceding argument, it remains to check the dimensional 
restrictions in the neighborhood of boundary. Note that an edge in $M$
of dimension $b$ gives rise to an edge in the cone $X= (0,1] \times M$ 
of dimension $b+1$. Consequently, if $m$ is odd and $b$ is even, $(m-b)$ is odd and hence $(m+1-(b+1))$ 
and $(b+1)$ are odd. The dimensional 
conditions in Theorem \ref{eta-exists} are satisfied. 
\end{proof}

\subsection{Index formula for manifolds with edge singular boundary}

Recall, we consider an allowable Dirac-type operator $\mathbb{D}$ acting between sections of Hermitian vector
bundles $E$ and $F$ over $X$. We assume that over the collar $M\times [0,\varepsilon)$ of the boundary, 
$\mathbb{D}$ takes a special form \eqref{boundary}. We have proved above that $\mathbb{D}$, equipped
with APS boundary conditions is Fredholm and also established that the eta-invariant for 
its tangential operator $D$ is well-defined. We now can prove the following index formula. 

\begin{thm}\label{index-main}
Let $X$ be even-dimensional. Assume that $\mathbb{D}^2$ is even, so that the heat kernel of $\mathbb{D}^2$ on $X_d$ lies in the 
even subcalculus. Assume further that the dimension of each edge singularity in $X_d$ is odd.
Then the Fredholm index of the Dirac operator $\mathbb{D}$ on $X$ 
with APS boundary conditions is given by the index formula 
\begin{align}
\textup{index} \, \mathbb{D} = \left(\int_X a_0 + \int_B b_0 \right)- \frac{\dim \ker D+ \eta(D)}{2},
\end{align}
where $B$ denotes the union of edge singularities of $X$, 
the sum of the two integrals is the constant term in the short time asymptotic 
expansion of the trace of $\exp (-t\mathbb{D}^*\mathbb{D}) - \exp (-t\mathbb{D}\mathbb{D}^*)$.
Moreover we have the following characterization of the integrands $a_0$ and $b_0$.
\begin{enumerate}
\item $a_0$ is local over $X$ in the sense of Theorem \ref{trace-coefficients-flat}.
In particular, $a_0$ is the same as in the classical case. \footnote{For example if $\mathbb{D}$ is 
the spin Dirac operator acting between the sections of positive and negative spinor bundles, 
then $a_0$ is given by the maximal degree part of $\widehat{A}$-polynomial applied to the curvature  tensor of $X$.}
\medskip

\item $b_0$ is local over $B$ in the sense of Theorem \ref{trace-coefficients-flat}.
\end{enumerate}
\end{thm}

\begin{proof}
Theorem \ref{eta-exists} asserts the existence
of the eta-invariant $\eta(D)$. The relation \cite[(3.8)]{APSa} then implies that
\begin{align}
\textup{index} \, \mathbb{D} + \frac{\dim \ker D+ \eta(D)}{2}
\end{align}
equals the constant term in the short time asymptotics \eqref{F-exp}.
The statement now follows from a geometric characterization of coefficients 
similar to Theorem \ref{trace-coefficients-flat} with 
\begin{align}
\int_M a_0 := A'_{\frac{m+1}{2}}, \quad \int_B b_0 := \sum_i B'_{\frac{b_i}{2}}.
\end{align}
The integrand $a_0$ is the same as in the classical case. Indeed, $A'_{\frac{m+1}{2}}$ is local 
in the sense of Theorem \ref{trace-coefficients-flat} and hence the integrand 
$a_0(p)$ depends functorially only on a finite numbers of jets of the full symbol of $\mathbb{D}$ at $p \in M$. 
In particular, if an open neighborhood of $p \in M$ is isometrically identified 
with an open neighborhood of $p'$ in a smooth closed manifold $M'$, then $a_0(p)$ equals to the 
corresponding coefficient $a'_0(p)$ in the corresponding index formula on $M'$. 
\end{proof}

\section{Index theorem for Galois coverings with edge singular boundary}

As in the previous section we consider a compact incomplete manifold $(M,g)$ with 
edge singularities and an admissible Riemannian metric. Consider an essentially self-adjoint Dirac
operator $D$, satisfying the geometric Witt condition.
Assume $M$ is boundary of another compact admissible edge manifold $X$ with a product
type metric at the boundary. Consider an allowable Dirac-type $\mathbb{D}$ acting between sections of a Hermitian vector
bundle $E$ over $X$. We continue under the geometric 
Witt condition for $\mathbb{D}$ imposed in \S \ref{index-section}. \medskip

Consider a Galois covering $\widetilde{X}$ of $X$ with Galois group $\Gamma$. 
We denote its fundamental domain by $\mathscr{F}$. Its boundary is again a Galois covering
$\widetilde{M}$ of $M$. We denote the fibration of links over the edge singular stratum 
in $\widetilde{X}$ by $\widetilde{Y}$, which is again a Galois covering of the 
corresponding fibration in $X$ with fundamental 
domain $\mathscr{F}_Y$. \medskip 

We denote the lifts of $\mathbb{D}$ and $D$ to the coverings
$\widetilde{X}$ and $\widetilde{M}$ by $\widetilde{\mathbb{D}}$ and $\widetilde{D}$, 
respectively. Near the boundary $\partial \widetilde{X} = \widetilde{M}$, with the boundary 
defining function $\widetilde{u}$, the operator $\widetilde{\mathbb{D}}$ is related to $\widetilde{D}$ 
in terms of a bundle isomorphism $\widetilde{\sigma}$ on the lift of $E\restriction M$ to $\widetilde{M}$ by
\begin{align}
\widetilde{\mathbb{D}} = \widetilde{\sigma} \left( \frac{\partial}{\partial \widetilde{u}} + \widetilde{D} \right).
\end{align}

\subsection{Index theory for Galois coverings with edge singular boundary}

We may now proceed to establish $\Gamma$-Fredholmness of $\widetilde{\mathbb{D}}$
by modelling the argument of Roe \cite{Roe} and Ramachandran \cite{Ram}
in a singular setting. A related argument is given by Vaillant \cite{Vaillant}, however
rather with non-compact bases with cylindrical ends and hence with an $L^2$-version of an
index theorem on Galois coverings. However, in our setting a more straightforward 
argument is appropriate. \medskip

First of all we note that the concepts of $\Gamma$-Hilbert Schmidt and 
$\Gamma$-trace class operators introduced in \S \ref{subsect:von neumann}
carry over  verbatim to the setting of Galois coverings with regular boundary.
We may now state the following central proposition.

\begin{prop}\label{ideal-Galois}
The heat operators of $\widetilde{\Delta}_1:= \widetilde{\mathbb{D}}^* \widetilde{\mathbb{D}}$
and $\widetilde{\Delta}_2:= \widetilde{\mathbb{D}} \widetilde{\mathbb{D}}^*$, as well as the 
orthogonal projections $P_{\ker \widetilde{\mathbb{D}}}$ and $P_{\ker \widetilde{\mathbb{D}}^*}$ of $L^2(\widetilde{X})$
onto the kernel of $\widetilde{\mathbb{D}}$ and $\widetilde{\mathbb{D}}^*$ respectively, are $\Gamma$-trace class.
The $\Gamma$-equivariant Dirac operator $\widetilde{\mathbb{D}}$ is $\Gamma$-Fredholm,
i.e. admits a finite $\Gamma$-index
\begin{align}
\textup{index}_\Gamma \widetilde{\mathbb{D}} := \textup{Tr}_\Gamma (P_{\ker \widetilde{\mathbb{D}}}) 
-  \textup{Tr}_\Gamma (P_{\ker \widetilde{\mathbb{D}}^*}).
\end{align}
\end{prop}

\begin{proof}
We may argue as in Proposition \ref{index-trace} 
that the heat operators of $\widetilde{\Delta}_{1}$ and $\widetilde{\Delta}_2$
are $\Gamma$-trace class. Using the Duhamel principle it suffices to study the heat kernels
of the corresponding operators on the infinite half-cylinder $\widetilde{M}\times [0,\infty)$ 
and the double Galois covering $\widetilde{X}_d:= \widetilde{X} 
\cup_{\widetilde{M}} (-\widetilde{X})$. The latter space
is smooth across the join, since the Riemannian metric on 
$\widetilde{X}$ is assumed to be product near
the boundary $\widetilde{M}\times \{u=0\}$. \medskip

Arguing as in Theorem \ref{trace-class-Galois}, we find that the 
heat operators on the double Galois covering $\widetilde{X}_d:= \widetilde{X} \cup_{\widetilde{M}} 
(-\widetilde{X})$ are $\Gamma$-trace class for each fixed $t>0$ with a short time asymptotics as $t\to 0$
of the same structure as in the compact setting, cf. \eqref{H-exp}. \medskip

By Theorem \ref{trace-class-Galois} the heat operator 
of $\widetilde{D}^2$ on $\widetilde{M}$
is $\Gamma$-trace class, and consequently the heat 
operators of $\widetilde{\Delta}_{1}$ and $\widetilde{\Delta}_2$,
viewed as operators on the infinite half-cylinder $\widetilde{M}\times [0,\infty)$, are $\Gamma$-trace 
class near the boundary. Indeed, consider cutoff functions $\phi, \psi \in C^\infty_0[0,\varepsilon)$
such that $\textup{supp} \, \phi \subset \textup{supp} \, \psi$ and $\textup{supp} \, \phi \cap \textup{supp} (d\psi/du)
= \varnothing$. Extend $\phi, \psi$ trivially to functions on $X$ and let $\widetilde{\phi}, \widetilde{\psi}$ 
denote the lifts of $\phi, \psi$ to the Galois covering $\widetilde{X}$, respectively. Then, like in 
\eqref{estimate-APS}, compare also \cite[p.344-345]{Ram}, we may estimate for any $k\in \{1,2\}$
the $\Gamma$-trace of $\widetilde{\phi} \circ e^{-t\widetilde{\Delta}_k} \circ \widetilde{\psi}$ by the 
$\Gamma$-trace of $\widetilde{D}^2$ on $\widetilde{M}$. \medskip

Consequently $e^{-t\widetilde{\Delta}_k} \in \calC_1$ for any $k\in \{1,2\}$. 
We can write the projections $P_{\ker \widetilde{\mathbb{D}}}$ and $P_{\ker \widetilde{\mathbb{D}}^*}$ as compositions 
with the corresponding heat operators 
\begin{align}
P_{\ker \widetilde{\mathbb{D}}} = e^{-t\widetilde{\Delta_1}} \circ P_{\ker \widetilde{\mathbb{D}}}, 
\quad P_{\ker \widetilde{\mathbb{D}}^*} = e^{-t\widetilde{\Delta_2}} \circ P_{\ker \widetilde{\mathbb{D}}^2}.
\end{align}
Since the trace class operators $\calC_1$
form a two-sided ideal in the von Neumann algebra $\mathscr{A}_\Gamma (\widetilde{X})$, 
we conclude that the projections are 
$\Gamma$-trace class as well. Consequently the $\Gamma$-dimensions 
$\dim_\Gamma \ker \widetilde{\mathbb{D}}= \textup{Tr} (P_{\ker \widetilde{\mathbb{D}}})$
and $\dim_\Gamma \ker \widetilde{\mathbb{D}}^* = \textup{Tr} (P_{\ker \widetilde{\mathbb{D}}^*})$ are finite and 
$\widetilde{\mathbb{D}}$ is $\Gamma$-Fredholm with index
\begin{align}
\textup{index}_\Gamma \widetilde{\mathbb{D}} := \dim_\Gamma \ker \widetilde{\mathbb{D}}
-  \dim_\Gamma \ker \widetilde{\mathbb{D}}^*.
\end{align}
\end{proof}

Repeating \cite[Lemma 15.11]{Roe}, we deduce the McKean Singer index formula.

\begin{prop}  The Dirac operator $\widetilde{\mathbb{D}}$ is $\Gamma$-Fredholm
with the index $\textup{ind}_\Gamma \widetilde{\mathbb{D}} $ given by a $\Gamma$-super trace 
\begin{align*}
\textup{ind}_\Gamma \widetilde{\mathbb{D}} 
= \textup{Str}_\Gamma \left(e^{-t\widetilde{\mathbb{D}}^2}\right) 
:= \textup{Tr}_\Gamma \left(e^{-t(\widetilde{\mathbb{D}})^*\widetilde{\mathbb{D}}}\right) 
- \textup{Tr}_\Gamma \left(e^{-t \widetilde{\mathbb{D}}\widetilde{\mathbb{D}}^*}\right) 
\end{align*}
\end{prop}

\subsection{Eta invariants on boundaries of singular Galois coverings}\label{eta-galois-section}

Following the Atiyah-Patodi-Singer argument outlined in the 
proof of Theorem \ref{eta-exists}, we obtain the following corresponding
existence theorem on Galois coverings.

\begin{thm}\label{eta-exists-Galois}
Assume the Galois covering $\widetilde{M}\to M$ is boundary of some Galois covering
$\widetilde{X} \to X$ with $\partial X = M$ and product metric and bundle structures near the boundary.
Assume that for the dimension $b$ of each edge in $X$, 
at least one of the numbers $(m+1-b)$ and $b$ is odd. 
We finally assume that $\mathbb{D}$ is an allowable
even or odd Dirac-type operator. Then the $\Gamma$ eta-invariant $\eta_\Gamma(\widetilde{D})$
of the Dirac operator $\widetilde{D}$ on $\widetilde{M}$ is well-defined.
\end{thm}

\begin{proof}
We follow the argument of Atiyah-Patodi-Singer, cf. proof of Theorem \ref{eta-exists}. 
We  equip the Galois covering space $\widetilde{X}$ with a product metric $(du^2+g)$ near the boundary,
where $u\in [0,\varepsilon)$ is the inward normal coordinate. 
Consider the double manifold $\widetilde{X}_d:= \widetilde{X}
\cup_{\widetilde{M}} (-\widetilde{X})$. By the product assumption, 
the Riemannian metric on $X$ extends smoothly to a Riemannian 
metric on the double. The Dirac $\widetilde{\mathbb{D}}$ defines 
an operator on the double, which we denote as $\widetilde{\mathbb{D}}$ 
again. \medskip

Let us now consider the heat operators associated to $\widetilde{\mathbb{D}}$
as a Dirac operator on the double manifold $\widetilde{X}_d$. By the same argument as in Corollary 
\ref{smooth-galois}, we infer the  continuity of the Schwartz kernels for 
$\exp (-t\widetilde{\mathbb{D}}^*\widetilde{\mathbb{D}})$ and 
$\exp (-t\widetilde{\mathbb{D}}\widetilde{\mathbb{D}}^*)$ at the diagonal 
and hence a representation of their traces by integrals of their respective
Schwartz kernels at the diagonal along the fundamental domain, as in 
Proposition \ref{lidskii-theorem-galois}. \medskip

Evenness or oddness of $\widetilde{\mathbb{D}}$ guarantees that the heat kernels 
$\exp (-t\widetilde{\mathbb{D}}^*\widetilde{\mathbb{D}})$ and 
$\exp (-t\widetilde{\mathbb{D}}\widetilde{\mathbb{D}}^*)$
both lie in the even subcalculus. In \cite[(3.4)]{APSa} the authors define a function 
\begin{align}
F(p,t) := \textup{tr} \left(\exp (-t\widetilde{\mathbb{D}}^*\widetilde{\mathbb{D}}) - 
\exp (-t\widetilde{\mathbb{D}}\widetilde{\mathbb{D}}^*)\right) (p,p)
\end{align}
on the double manifold $\widetilde{X}_d$. The short time asymptotic expansion of $F(p,t)$ follows from 
Theorem \ref{trace-class-Galois} and \eqref{H-exp}
and in particular obtain ($b_i, i \in I$ denote the dimensions of the various
edge singular strata in the double manifold $\widetilde{X}_d$)
 \begin{equation}
 F(t) := \int_{\mathscr{F}} F(p,t) \sim_{t\to 0} \sum_{\ell = 0}^\infty A'_\ell t^{\ell - \frac{m+1}{2}} 
+ \sum_{i \in I} \sum_{\ell = 0}^\infty B'_\ell t^{\ell - \frac{b_i}{2}}
+ \sum_{i \in I} \sum_{\ell \in \mathfrak{I}_i} C'_\ell t^{\ell - \frac{b_i}{2}} \log t.
\end{equation}
where $\mathfrak{I}_i = \varnothing$ if $(m+1-b_i)$ is odd and 
$\mathfrak{I}_i=\N_0$ if $(m+1-b_i)$ is even. In the present context, the central observation 
of Atiyah-Patodi-Singer in \cite[(3.8)]{APSa}, worked out in detail in the proof of Proposition 
\ref{eta-exists} and carried over to the non-compact
setting of Galois coverings, is that
\begin{align}
\res\limits_{s=0} \eta_\Gamma (\widetilde{D},2s) = - 2 \sum_{i \in I} C'_{\frac{b_i}{2}}.
\end{align}
Assumptions on the dimensions $b_i$ imply that the coefficients 
$C'_{\frac{b_i}{2}}$ are always trivial and hence the eta invariant
$\eta_\Gamma (\widetilde{D})$ is well-defined. 
\end{proof}

\subsection{Atiyah-Patodi-Singer index theorem on Galois coverings}

We may now proceed as in \cite[Theorem  7.1.1]{Ram} to deduce an index
formula for the $\Gamma$-index of $\widetilde{\mathbb{D}}$, where the index theorem
on Galois coverings has been established following the standard argument once the diagonalization
theorem of Browder-Garding is employed.

\begin{thm}\label{index-main-galois}
Assume that $\widetilde{X}$ is an even-dimensional admissible
edge space with dimension $b$ of each edge singularity being odd.  Assume $\widetilde{\mathbb{D}}^2$ is even, 
so that the heat kernel of $\widetilde{\mathbb{D}}^2$ on 
the double Galois covering $\widetilde{X}_d$ lies in the 
even subcalculus. Then the $\eta_\Gamma(\widetilde{D})$ 
eta invariant is well-defined by restricting the eta function 
\begin{align*}
\eta_\Gamma(\widetilde{D},s) = \frac{1}{\Gamma((s+1)/2)}
\int_0^\infty t^{(s-1)/2} \, \textup{tr}_\Gamma \, (\widetilde{D} e^{-t\widetilde{D}^2}) dt
\end{align*}
to $s=0$ and is related to the $\Gamma$-Fredholm index of $\widetilde{\mathbb{D}}$ by the 
index formula
\begin{align}
\textup{ind}_\Gamma \widetilde{\mathbb{D}} = \left( \int_{\mathscr{F}} \widetilde{a}_0 +  \int_{\mathscr{F}_B} \widetilde{b}_0 \right)- 
\frac{\dim \ker_\Gamma \widetilde{D} + \eta_\Gamma(\widetilde{D})}{2},
\end{align}
where the sum of two integrals is the constant term in the short time asymptotic 
expansion of the super trace $\textup{Str}_\Gamma \left(e^{-t\widetilde{\mathbb{D}}^2}\right)$
on the double Galois covering $\widetilde{X} \cup_{\widetilde{M}} (-\widetilde{X})$.
Moreover we have the following characterization of the integrands $\widetilde{a}_0$ and $\widetilde{b}_0$:
\begin{enumerate}
\item $\widetilde{a}_0$ is local over the interior $\mathscr{F}$ in the sense 
of Theorem \ref{trace-coefficients-flat}. \medskip

\item $\widetilde{b}_0 $ is local over the edge $\mathscr{F}_B$ 
in the sense of Theorem \ref{trace-coefficients-flat}.

\end{enumerate}
\end{thm}

\section{Existence of the APS and Cheeger-Gromov rho invariants}

\subsection{Existence of the Atiyah-Patodi-Singer rho invariants for allowable Dirac operators}

Let us first consider the Atiyah-Patodi-Singer rho invariant on 
an edge singular manifold $(M,g)$ with an admissible edge metric.
Let $E$ and $F$ denote two flat vector bundles of same rank induced by two 
unitary representations $\A$ and $\beta$ of the fundamental group $\pi_1(\overline{M})$, 
respectively, where $\overline{M}$ denotes the compact stratified space including its singular stratum.
Each vector bundle yields a twisted Dirac operator, which we denote
by $D_E$ and $D_F$, respectively. Assume both satisfy the geometric Witt condition.
Consider the difference of the 
associated eta functions.
\begin{align}\label{APS-definition}
\rho_{\A - \beta}(s,D) = \eta(s,D_E) - \eta(s,D_F). 
\end{align}  
In case each individual eta function is regular at $s=0$,
the rho function is regular at $s=0$ and its value at zero,
the Atiyah-Patodi-Singer (APS) rho invariant, obviously 
exists. In case the individual eta invariants are not well-defined,
we are in the setup where the eta functions may admit a pole of order $2$ at $s=0$ and 
the Laurent coefficients of $s^{-2}$ and the residues 
at $s=0$ are induced by a coefficients $A_*, B_*$ and $C_*$ in the asymptotic expansions in Theorem 
\ref{exp1}, which are integrals of terms $a_*, b_*$ and $c_*$, respectively, that are local over the interior $M$ and the edge $B$. 
We indicate the a priori dependence of the coefficients on the
vector bundles by an additional upper index. By flatness of the vector 
bundles $E$ and $F$ over $\overline{M}$, these local coefficients coincide: $a^E_* = a^F_*$, 
$b^E_* = b^F_*$ and $c^E_* = c^F_*$. Hence their contribution
to the Laurent coefficient of $s^{-2}$ and the residue of $\rho_{(E,F)}(s,D)$ at $s=0$ cancels. We conclude

\begin{thm}\label{theo:rho-aps}
Assume $(M^m,g)$ is an edge manifold with an admissible edge metric
and an edge singularity at $B^b$. Let $D$ be an allowable Dirac-type operator.
Let $\A$ and $\beta$ be two unitary representations of the fundamental group 
$\pi_1(\overline{M})$. Assume that both twisted Dirac operators 
 satisfy the geometric Witt condition.
Then the rho function $\rho_{\A- \beta}(s,D)$ is well-defined at $s=0$.
The corresponding APS rho invariant is then defined by 
$$\rho_{\A- \beta}(D) := \rho_{\A- \beta}(0,D).$$
\end{thm}
 
 \subsection{Existence of the Atiyah-Patodi-Singer rho invariants for geometric Dirac operators}

If $D$ is the signature operator on a Witt space  or the spin Dirac operator
 for an incomplete edge metric of uniform positive scalar curvature, then 
 the statement in Theorem \ref{theo:rho-aps} can be improved.
 Observe first that if  $D$ is the signature operator, then
the cohomological Witt condition $H^{\frac{f}{2}}(F)=0$ holds for $D$ as well as for its twisted versions:
indeed, the twisting bundles
are trivial when restricted to the links  \footnote{recall that
the representations $\alpha $ and $\beta$ are representations of the fundamental group
of the singular pseudomanifold $\overline{M}$}. Once we have the cohomological 
Witt condition we can always  strengthen it to its geometric
version via scaling of the metric on the links of the edge fibration and rescaling of the Hermitian metric 
on the twisting bundle, so that the absolute value of the eigenvalues of the vertical operators is scaled up. 
 In this case the conclusion of Theorem \ref{theo:rho-aps} holds assuming that $D$ satisfies 
 the cohomological Witt condition only. 
 The advantage is that the cohomological Witt condition
 does not involve the two representations. \medskip  

In case $D$ is the spin 
Dirac operator and we assume in addition that the singular stratum $B$ is spin and  that we have  
uniform positive scalar curvature. Then we see that the twisted spin Dirac operator
satisfies the geometric Witt condition up to scaling of the metric: indeed, the Lichnerowicz formula
for twisted operators involve an additional curvature term but since our bundles $E_\alpha$ and $E_\beta$
are flat, this additional term is not present and we can argue as usual in order to
obtain the invertibility of the twisted operators. Hence, by scaling, we see that 
the  twisted operators
satisfy the geometric
Witt condition. In fact, by Lichnerowicz formula the width of the gap in the spectrum of
$D$ and its twisted version will be the same.
We conclude that Theorem \ref{theo:rho-aps} holds for the spin Dirac operator assuming that we have  
uniformly positive scalar curvature. The advantage, also in this case, is that this condition involves the metric only and not the representations $\alpha$ and $\beta$.

\subsection{Existence of the Cheeger-Gromov rho invariants}

Let us now consider the Cheeger-Gromov rho invariant on 
an admissible edge manifold $(M,g)$ and its Galois covering $\widetilde{M}$
with Galois group $\Gamma$ and fundamental domain $\mathscr{F}$.
We continue in the notation of \S \ref{eta-galois-section} and 
consider the difference of the 
associated eta functions.
\begin{align}\label{rho-definition-again}
\rho_\Gamma(D, s) := \eta_\Gamma(\widetilde{D},s) - \eta(D,s).
\end{align}

By Proposition \ref{trace-class-Galois} the short time asymptotics
of the trace for $De^{-tD^2}$ and the $\Gamma$-trace for $\widetilde{D}e^{-t\widetilde{D}^2}$ coincide. 
Consequently we obtain the following result.

\begin{thm}
Assume $(M^m,g)$ is an edge manifold with an admissible edge metric
and an edge singularity at $B^b$. Let $D$ be an allowable Dirac-type operator
satisfying the geometric Witt condition. Let $\widetilde{D}$ be its lift to 
the Galois covering $\widetilde{M}$. Then the rho function $\rho_\Gamma(D, s)$ is well-defined at $s=0$
and the corresponding Cheeger-Gromov rho invariant can be defined by 
$$\rho_\Gamma(D) := \rho_\Gamma(D, 0)$$ 
\end{thm}

\section{Stability properties of rho-invariants}\label{section-bordism}

\subsection{Rho invariants of bordant metrics of positive scalar curvature} 

\begin{defn}\label{bordism-def}
Let $(M_1,g_1)$ and $(M_2,g_2)$ be two odd-dimensional incomplete edge spaces with admissible edge metrics $g_1$
and $g_2$ of \emph{uniform positive scalar curvature}. We assume that both $(M_1,g_1)$ and $(M_2,g_2)$ are spin. 
We assume that the singular strata $B_1$ and $B_2$ are also spin.
Finally, we assume the existence of two classifying maps $r_1 : \overline{M}_1\to B\Gamma$
and $r_2: \overline{M}_2\to B\Gamma$.
 We call the two triples $(M_1,r_1,g_1)$ and $(M_2,r_2,g_2)$  
\emph{bordant} if there exists an even-dimensional spin edge space $\overline{W}$ 
with boundary $\partial \overline{W} = \overline{M}_1 \sqcup (-\overline{M}_2)$
equipped on its regular part with a positive scalar curvature metric $G$ 
which is product near the boundary and restricts to 
$g_1$ at the boundary component $M_1$, and to $g_2$ at the boundary component $(-M_2)$.
We also assume the existence of a  classifying map $R: \overline{W}\to B\Gamma$ restricting to $r_1$ and $r_2$ at the boundary. 
As we shall need to apply the APS index theorem we restrict the class of bordisms $W$ to those with edges of odd dimension.
\end{defn}

Consider Theorem \ref{index-main} in the special case of $D$ being the spin Dirac operator. 
We indicate the two spin Dirac operators associated to $(M_1,g_1)$ and $(M_2,g_2)$
by the corresponding lower index $1$ and $2$. 
Pulling back the universal bundle over $B\Gamma$ with the two classifying maps we obtain two
Galois coverings with base respectively $M_1$ and $M_2$.
We  consider two unitary representations 
$\alpha$ and $\beta$ of $\Gamma$ of the same dimension. 
We twist the operators with the corresponding flat 
vector bundles $E_\A$ and $E_\beta$ and indicate dependence of the operators on the representations by an upper 
script $\A$ and $\beta$. By the Weitzenb\"ock formula and the positive scalar curvature assumption,  the index of $\mathbb{D}$
is zero and we obtain from \ref{index-main}
\begin{equation}\label{witt-a-b}
\begin{split}
&\eta(D^\A_1) - \eta(D^\A_2) = 2\left(\int_X a^\A_0 + \int_B b^\A_0 \right) ,\\
&\eta(D^\beta_1) - \eta(D^\beta_2) = 2\left(\int_X a^\beta_0 + \int_B b^\beta_0 \right) \,.\end{split}
\end{equation}
We recall that the integrands $a^{\gamma}_0$
and $b^\gamma_0$ with $\gamma \in \{\A, \beta\}$ are independent of the 
 flat vector bundles.
 Consequently, we find by subtracting the two equalities in \eqref{witt-a-b}
\begin{align}
\rho_{\alpha-\beta} (D_1) =  \rho_{\alpha-\beta} (D_2).
\end{align}

A similar argument applies in the case of Cheeger-Gromov rho invariants,
using Theorem \ref{index-main-galois} in the special case of $\widetilde{D}$ being the spin Dirac operator.
We summarise all this in the following

\begin{thm}
Let  $(M_1,r_1:M_1\to B\Gamma,g_1)$ and $(M_2,r_2:M_2\to B\Gamma,g_2)$  
be positively bordant as in Definition \eqref{bordism-def}.
For the  APS and the Cheeger-Gromov rho invariants we have
\begin{equation}
\begin{split}
\rho_{\alpha-\beta}(D_1) &=  \rho_{\alpha-\beta} (D_2),\\
\rho_{\Gamma}(\widetilde{D}_1) &=  \rho_{\Gamma}(\widetilde{D}_2).
\end{split}
\end{equation}
\end{thm}

\subsection{Metric independence of the signature rho invariants}

We begin with a metric independence result for the 
signature APS invariant. Let $\overline{M}$ be a stratified space of depth one. We assume that 
$\overline{M}$ is Witt. Let $g_1$ and $g_2$ 
be two admissible metrics on the open interior $M$ and let $g_\mu$ 
be a smooth family of admissible edge metrics on $M$ joining $g_1$ and $g_2$. 
We denote the signature operators associated to $g_\mu$  by $D_\mu$.
Up to scaling we can and we shall assume that the signature operators
$D^F_\mu$ on the links satisfy the {\bf geometric} Witt condition 
(they certainly satisfy the cohomological Witt condition).

\begin{thm}\label{independence-theorem}
 Let $\A$ and $\beta$
be any two unitary representations of $\pi_1(\overline{M})$ in $U(\ell)$. Then, under the above hypothesis
\begin{equation*}
 \rho_{\A - \beta} (D_1) = \rho_{\A - \beta} (D_2).
\end{equation*}
\end{thm}

\begin{proof}
We shall prove that the derivative of the rho-invariant associated to $g_\mu$ is equal to $0$ on $(0,1)$.
Denote the flat vector bundles defined by $\A$ and $\beta$ by $E^\A$ and $E^\beta$, respectively.
Denote the operators twisted by $E^\gamma, \gamma\in \{\A, \beta\}$, by an upper subscript $\gamma$. 
We denote the heat operators of $(D_\mu^\A)^2$ and $(D_\mu^\beta)^2$ by $H_\mu^\A$ and $H_\mu^\beta$, 
respectively. Using Theorem \ref{trace-coefficients-flat} we find
\begin{align}
T(t, \mu):= \Tr \left(D_\mu^\A H_\mu^\A \right) - \Tr \left(D_\mu^\beta H_\mu^\beta \right) = O(t^{\infty}), 
\ \textup{as} \ t\to 0.
\end{align}
Since the spectrum of $(D_\mu^\A)^2$ and $(D_\mu^\beta)^2$ is discrete, we also
conclude $T(t, \mu) = O(t^{-\infty})$ as $t\to +\infty$. 
Consequently, the APS rho invariant is given explicitly by evaluating the expressions 
\eqref{APS-definition} and \eqref{eta-heat} at $s=0$
\begin{align}
\rho_{\A - \beta} (D_\mu) = \frac{1}{\sqrt{\pi}} \int_0^\infty T(t, \mu) \frac{dt}{\sqrt{t}}.
\end{align}
In order to establish a variational formula for the rho invariant, we need to ensure that 
the operators act on the same Hilbert space. Hence, we consider the natural isometry
for a fixed $\mu_0 \in I$ and any $\gamma\in \{\A, \beta\}$
\begin{align}
T^\gamma_\mu: L^2(M, {}^{ie}\Lambda^* T^*M \otimes E^\gamma, g_\mu)
\to L^2(M, {}^{ie}\Lambda^* T^*M \otimes E^\gamma, g_{\mu_0}).
\end{align}
By construction, conjugating the Hodge star operator of $(M,g_{\mu_0})$
by the isometry $T^\gamma_\mu$ gives the Hodge star operator of $(M, g_\mu)$.
Consequently, $T^\gamma_\mu$ maps the domain of the twisted signature operator $D^\gamma_\mu$
to the domain of $D^\gamma_{\mu_0}$. We write $\mathscr{D}^\gamma_\mu:= T^\gamma_\mu \circ 
D^\gamma_\mu \circ (T^\gamma_\mu)^{-1}$ with fixed domain $\dom (D^\gamma_{\mu_0})$,
and $\mathscr{H}^\gamma_\mu:= T^\gamma_\mu \circ H^\gamma_\mu \circ (T^\gamma_\mu)^{-1}$. 
This allows us to rewrite $T(t,\mu)$ in terms of operators acting on a fixed Hilbert space
$L^2(M, {}^{ie}\Lambda^* T^*M \otimes E^\gamma, g_{\mu_0})$
\begin{align}
T(t, \mu) = \Tr \left(\mathscr{D}_\mu^\A \mathscr{H}_\mu^\A \right) - 
\Tr \left(\mathscr{D}_\mu^\beta \mathscr{H}_\mu^\beta \right).
\end{align}
We now compute the variation of $T(t,\mu)$ in $\mu$ using the semi-group property
of the heat kernel and conclude, justifying interchange of integration and differentiation 
by the dominated convergence theorem (we also denote derivatives in $\mu$ evaluated at $\mu_0$ by an upper dot),
that the following holds
\begin{equation}\label{mu0}
\begin{split}
\left. \frac{d}{d\mu} \right|_{\mu_0} T(t, \mu) 
&= \Tr \left(\dot{\mathscr{D}}_\mu^\A \mathscr{H}_\mu^\A \right) - 
\Tr \left(\dot{\mathscr{D}_\mu^\beta} \mathscr{H}_\mu^\beta \right) \\
&- t \Tr \left(\mathscr{D}_\mu^\A \left[ \left. \frac{d}{d\mu} \right|_{\mu_0} 
\left(\mathscr{D}_\mu^\A \right)^2 \right] \mathscr{H}_\mu^\A \right)
\\ &+ t \Tr \left(\mathscr{D}_\mu^\beta \left[ \left. \frac{d}{d\mu} \right|_{\mu_0} 
\left(\mathscr{D}_\mu^\beta \right)^2 \right] \mathscr{H}_\mu^\beta \right).
\end{split}
\end{equation}
Note that by construction, $\mathscr{H}_\mu^\gamma$ maps to the 
domain of $(\mathscr{D}^\gamma_\mu)^2$ that is given by $\dom (D^\gamma_{\mu_0})^2$ 
and is in particular independent of $\mu$. Hence the compositions above are well-defined.
Evaluating the derivatives of $\mathscr{D}_\mu^\A$ and $\mathscr{D}_\mu^\beta$
explicitly in terms of the isometries $T_\mu^\gamma$ we find exactly as in \cite[(3.7)]{MV}
($\gamma \in \{\A, \beta\}$)
\begin{equation*}
\begin{split}
\Tr \left(\dot{\mathscr{D}}_\mu^\gamma \mathscr{H}_\mu^\gamma \right)
&=\Tr \left(\dot{T_\mu^\gamma} \, D_\mu^\gamma \, H_\mu^\gamma \, (T^\gamma_\mu)^{-1}\right)
+\Tr \left(T_\mu^\gamma \, \dot{D_\mu^\gamma} \, H_\mu^\gamma \, (T^\gamma_\mu)^{-1}\right)
\\ &+\Tr \left(T_\mu^\gamma \, D_\mu^\gamma \, \dot{T_\mu^\gamma} \, T_\mu^\gamma 
\, H_\mu^\gamma \, (T^\gamma_\mu)^{-1}\right).
\end{split}
\end{equation*}
Employing commutativity of bounded operators under the trace, we conclude
\begin{align}\label{mu1}
\Tr \left(\dot{\mathscr{D}}_\mu^\gamma \mathscr{H}_\mu^\gamma \right)
= \Tr \left( \dot{D_\mu^\gamma} \, H_\mu^\gamma \right),
\end{align}
where we now view the composition $\dot{D_\mu^\gamma} \, H_\mu^\gamma $
as a differential expression $\dot{D_\mu^\gamma}$ applied to the Schwartz kernel 
of $H_\mu^\gamma$. Similar arguments prove
\begin{align}\label{mu2}
\Tr \left(\mathscr{D}_\mu^\gamma \left[ \left. \frac{d}{d\mu} \right|_{\mu_0} 
\left(\mathscr{D}_\mu^\gamma \right)^2 \right] \mathscr{H}_\mu^\gamma \right)
= 2 \Tr \left(\dot{D_\mu^\gamma} \, (D_\mu^\gamma)^2 H_\mu^\gamma \right).
\end{align}
Plugging \eqref{mu1} and \eqref{mu2} into \eqref{mu0}, and using the fact that by 
the Hodge theorem, the kernels of $(D_\mu^\A)^2$ and $(D_\mu^\beta)^2$ are independent
of $\mu$, precluding spectral flow at zero, we obtain after
justifying interchange of integration and differentiation by the dominated convergence theorem
\begin{equation*}
\begin{split}
\left. \frac{d}{d\mu} \right|_{\mu_0} \rho_{\A - \beta} (D_\mu) &= 
\frac{1}{\sqrt{\pi}} \int_0^\infty \left. \frac{d}{d\mu} \right|_{\mu_0} T(t, \mu) \frac{dt}{\sqrt{t}}
\\ &= \frac{1}{\sqrt{\pi}} \int_0^\infty \left( \Tr \left( \dot{D_\mu^\A} \, H_\mu^\A \right)
-  \Tr \left( \dot{D_\mu^\A} \, H_\mu^\A \right)\right) \frac{dt}{\sqrt{t}} \\
&- \frac{2}{\sqrt{\pi}} \int_0^\infty \left( \Tr \left( \dot{D_\mu^\A} \, \frac{d}{dt} H_\mu^\A \right)
-  \Tr \left( \dot{D_\mu^\A} \, \frac{d}{dt} H_\mu^\A \right)\right) \, \sqrt{t} \, dt.
\end{split}
\end{equation*}
Using integration by parts we obtain after cancellations
\begin{equation}\label{rho-variation}
\begin{split}
\left. \frac{d}{d\mu} \right|_{\mu_0} \rho_{\A - \beta} (D_\mu) 
= \frac{1}{\sqrt{\pi}} \int_0^\infty \frac{d}{dt} \left( \sqrt{t} \Tr \left( \dot{D_\mu^\A} \, H_\mu^\A \right)
-  \sqrt{t} \Tr \left( \dot{D_\mu^\A} \, H_\mu^\A \right)\right) \, dt = 0,
\end{split}
\end{equation}
where the last equality uses the fact that $\Tr \left( \dot{D_\mu^\A} \, H_\mu^\A \right)
-  \Tr \left( \dot{D_\mu^\A} \, H_\mu^\A \right)$ is vanishing to infinite order as $t$ goes either
zero or to infinity by the Hodge theorem on Witt spaces and by Theorem \ref{trace-coefficients-flat}.
\end{proof}

Next we tackle the Cheeger-Gromov rho invariant. 
We follow the very detailed proof of their result 
\cite{Cheeger-Gromov} given
in Roy \cite{indrava} and Azzali-Wahl \cite{azzali-wahl}

\begin{thm}\label{CG-independence}
Under the same assumptions as in Theorem \ref{independence-theorem}
we also have
\begin{equation*}
 \rho_\Gamma (D_1) = \rho_\Gamma (D_2).
\end{equation*}
\end{thm}

\begin{proof}
We continue in the notation set in the previous Theorem \ref{independence-theorem}
and indicate the operators on the Galois covering by an upper tilde. The twisting flat vector 
bundle is fixed in the current setting and hence is not indicated notationally here. The $\Gamma$-trace
is denoted by $\Tr_\Gamma$. Using Proposition \ref{trace-class-Galois} we observe
\begin{align}
T_\Gamma(t, \mu):= \Tr_\Gamma \left(\widetilde{D}_\mu 
\widetilde{H}_\mu \right) - \Tr \left(D_\mu H_\mu \right) = O(t^{\infty}), 
\ \textup{as} \ t\to 0.
\end{align}
Due to discreteness of the spectrum of $(D_\mu)^2$ and due to \eqref{eta-est}, 
$t^{-1/2} T_\Gamma(t, \mu)$ is integrable in $t\in [1,\infty)$. Together with the infinite
vanishing of $T_\Gamma(t, \mu)$ as $t\to 0$, we may write the Cheeger-Gromov 
invariant by evaluating the expressions for \eqref{rho-definition-again} at $s=0$
\begin{align}
\rho_\Gamma (D_\mu) = \frac{1}{\sqrt{\pi}} \int_0^\infty T_\Gamma (t, \mu) \frac{dt}{\sqrt{t}}.
\end{align}
The formulae \eqref{mu0}, \eqref{mu1} and \eqref{mu2} for the variation of 
$T (t, \mu)$ employ only the trace class property of the operators, the fact that 
bounded operators commute under the trace and the semi-group property of the 
heat kernel. Since these properties continue to hold under the Gamma-trace and the
heat kernel on the Galois covering, we conclude by repeating the same arguments 
as in Theorem \ref{independence-theorem}
\begin{equation}
\begin{split}
\frac{d}{d\mu} T_\Gamma(t, \mu) 
&= \Tr_\Gamma \left( \dot{\widetilde{D}}_\mu \, \widetilde{H}_\mu \right) - 
\Tr \left(\dot{D}_\mu H_\mu \right) \\
&- 2t \Tr_\Gamma \left(\dot{\widetilde{D}}_\mu \, (\widetilde{D}_\mu)^2 \widetilde{H}_\mu \right)
+ 2t \Tr \left(\dot{D}_\mu \, (D_\mu)^2 H_\mu \right)\end{split}
\end{equation}
Exactly as in \eqref{rho-variation} we conclude using integration by parts 
\begin{equation}\label{rho-variation-Galois}
\begin{split}
\left. \frac{d}{d\mu} \right|_{\mu_0} \rho_{\Gamma} (D_\mu) 
= \frac{1}{\sqrt{\pi}} \int_0^\infty \frac{d}{dt} \left( \sqrt{t} \Tr_\Gamma \left( \dot{\widetilde{D}}_\mu \, 
\widetilde{H}_\mu \right)
-  \sqrt{t} \Tr \left( \dot{D}_\mu \, H_\mu \right)\right) \, dt,
\end{split}
\end{equation}
provided the limits of $\sqrt{t} \Tr_\Gamma \left( \dot{\widetilde{D}}_\mu \, 
\widetilde{H}_\mu \right)$ and $\sqrt{t} \Tr \left( \dot{D}_\mu \, H_\mu \right)$
as $t\to 0$ and as $t\to \infty$ are well-defined. We now study these limits.
Using the Duhamel principle worked out in Proposition \ref{trace-class-Galois},
the short time asymptotic behaviour of the two traces coincides and hence 
\begin{align}\label{limit-zero}
\lim_{t\to 0} \sqrt{t} \left( \Tr_\Gamma \left( \dot{\widetilde{D}}_\mu \, 
\widetilde{H}_\mu \right) - \Tr \left( \dot{D}_\mu \, H_\mu \right) \right) = 0.
\end{align}
For the large time behaviour we proceed on the Galois covering as in \S 
\ref{spectral-calculus}. Note that $\dot{\widetilde{D}}_\mu \, 
\widetilde{H}_\mu = \dot{*}_\mu d \, \widetilde{H}_\mu$, where $*_\mu$ 
denotes the Hodge star operator associated to $g_\mu$ and $d$ 
is the exterior derivative. Thanks to the classical inequalities in the von Neumann algebras 
we can estimate
\begin{align*}
\left| \Tr_\Gamma \left( \dot{\widetilde{D}}_\mu \, 
\widetilde{H}_\mu \right)\right|  =  \left| \Tr_\Gamma \left( \dot{*}_\mu d \, 
\widetilde{H}_\mu \right) \right| &\leq \|\dot{*}_\mu *_\mu^{-1}\| \Tr_\Gamma \left| *_\mu d \, 
\widetilde{H}_\mu \right| \\ &= \|\dot{*}_\mu *_\mu^{-1}\| \Tr_\Gamma \left| \widetilde{D}_\mu \, 
\widetilde{H}_\mu \right| \leq C \Tr_\Gamma \left| \widetilde{D}_\mu \, 
\widetilde{H}_\mu \right| 
\end{align*}
for some constant $C>0$ independent of $\mu$ and $t$. Recall
the tempered measure $m_\Gamma$ relative to $\widetilde{D}_\mu$, introduced in \S \ref{spectral-calculus}.
Consider the tempered measure $n_\Gamma$ relative to $\widetilde{D}_\mu^2$.  We can then write
\begin{align*}
\Tr_\Gamma \left| \widetilde{D}_\mu \, 
\widetilde{H}_\mu \right| = \int_0^\lambda \sqrt{x} e^{-tx} dn_\Gamma(x) 
+ \int_\lambda^\infty \sqrt{x} e^{-tx} dn_\Gamma(x) =: I_1(t) + I_2(t).
\end{align*}
Let us denote by $\widetilde{E}_x$
of $\widetilde{D}_\mu^2$. 
Note that the function $\sqrt{x} e^{-tx}$ has its maximum at $x= 1/ (2t)$
with value $e^{-1/2}/\sqrt{2t}$. Thus we can estimate $I_1(t)$ as follows
\begin{align*}
I_1(t) = \int_0^\lambda \sqrt{x} e^{-t x^2} dn_\Gamma(x) 
&\leq  (2t)^{-1/2} e^{-1/2} \int_0^\lambda dn_\Gamma(x) 
\\ &\leq  (2t)^{-1/2} e^{-1/2} \Tr_\Gamma ((1-E_0)E_\lambda (1-E_0)).
\end{align*}
We estimate $I_2(t)$ exactly as in $\eqref{eta-est}$ and obtain
\begin{align*}
I_2(t) \leq t^{-1/2} e^{-(t-1)\lambda} \Tr_\Gamma \widetilde{H}_\mu(1). 
\end{align*}
Note that $\sqrt{t}\, I_2(t)$ converges to zero as $t\to \infty$. 
From here we conclude 
\begin{align*}
\lim_{t\to \infty} \left| \sqrt{t} \Tr_\Gamma \left( \dot{\widetilde{D}}_\mu \, 
\widetilde{H}_\mu \right) \right| \leq \lim_{t\to \infty}  \sqrt{t} (I_1(t) + I_2(t))
 \leq (2e)^{-1/2} \Tr_\Gamma ((1-E_0)E_\lambda (1-E_0)).
\end{align*}
Since the left hand side of the inequality is independent of $\lambda$, 
we may take $\lambda \to 0$, and noting by normality of the Gamma-trace
that $\Tr_\Gamma ((1-E_0)E_0 (1-E_0))=0$, we finally obtain
\begin{align}\label{limit-infinity}
\lim_{t\to \infty} \sqrt{t} \Tr_\Gamma \left( \dot{\widetilde{D}}_\mu \, 
\widetilde{H}_\mu \right)  = 0.
\end{align}
Similar argument holds for $\sqrt{t} \Tr \left( \dot{D}_\mu \, H_\mu \right)$
with integrals replaced by sums due to discreteness of the spectrum. 
In view of \eqref{limit-zero} and \eqref{limit-infinity} we conclude from 
\eqref{rho-variation-Galois}
\begin{equation}
\begin{split}
\left. \frac{d}{d\mu} \right|_{\mu_0} \rho_{\Gamma} (D_\mu) 
&= \frac{1}{\sqrt{\pi}} \lim_{t\to \infty} \left( \sqrt{t} \Tr_\Gamma \left( \dot{\widetilde{D}}_\mu \, 
\widetilde{H}_\mu \right) -  \sqrt{t} \Tr \left( \dot{D}_\mu \, H_\mu \right)\right) \\
&- \frac{1}{\sqrt{\pi}}\lim_{t\to 0} \left( \sqrt{t} \Tr_\Gamma \left( \dot{\widetilde{D}}_\mu \, 
\widetilde{H}_\mu \right) -  \sqrt{t} \Tr \left( \dot{D}_\mu \, H_\mu \right)\right) = 0.
\end{split}
\end{equation}

\end{proof}

\subsection{Stratified diffeomorphism invariance of the signature rho invariants}
The metric invariance of the APS and the Cheeger-Gromov rho invariant implies immediately
their stratified diffeomorphism invariance. Indeed, 
let $\overline{f}:\overline{M}\to \overline{M}^\prime$ be a stratified
diffeomorphism between two edge spaces and let $f:M\to M^\prime$ be  the induced diffeomorphism on 
the respective regular parts. Fix an admissible edge structure $g^\prime$ on $M^\prime$ and consider
its pull-back $f^* g^\prime$. This is an admissible edge structure on $M$; the operation of
pull-back defines a unitary isomorphism $U: L^2(M^\prime, g)\to L^2 (M,f^*g)$ and, more generally,
a unitary isomorphism $U: L^2 (M^\prime, {}^{ie}\Lambda^* T^* (M^\prime))\to L^2 (M, {}^{ie}\Lambda^* T^* M)$.
One check easily that the signature operator associated to $f^* g^\prime$ is obtained by conjugating the
signature operator on $M^\prime$ through the unitary isomorphism $U$; in formulae, and adopting a very precise
notation for the signature operator $D$, we have:
$$D^{f^*g^\prime}_M= U^{-1} \circ D^{g^\prime}_{M^\prime} \circ U$$
Consequently, by functional calculus,
$$ D^{f^*g^\prime}_M \exp (- t (D^{f^*g^\prime}_M)^2)=U^{-1} \circ D^{g^\prime}_{M^\prime} \exp (-t 
(D^{g^\prime}_{M^\prime})^2)\circ U$$
which  implies the equality 
\begin{equation}\label{equality-of-eta}
\eta (D^{f^*g^\prime}_M)= \eta (D^{g^\prime}_{M^\prime})
\end{equation}
when one of the two is well defined.
This implies easily that if  $\alpha^\prime$ and $\beta^\prime$ are  two  representations of $\pi_1 (\overline{M}^\prime)$ into
$U(\ell)$ and 
$\alpha:=  \alpha^\prime\circ \overline{f}_*$, $\beta  := \beta^\prime  \circ  \overline{f}_* $ then
\begin{equation}\label{stratified-invariance-aps}
\rho_{\alpha-\beta} (D^{f^*g^\prime}_M)= \rho_{\alpha^\prime- \beta^\prime} (D^{g^\prime}_{M^\prime})
\end{equation}
Since both members are metric independent we conclude that for the signature operator $D$ the following equality holds:
\begin{equation}\label{stratified-invariance-aps-bis}
\rho_{\alpha-\beta} (D_M)= \rho_{\alpha^\prime- \beta^\prime} (D_{M^\prime})
\end{equation}
A similar argument applies to two Galois $\Gamma$-coverings 
$$ \overline{M}_\Gamma\to \overline{M}\,,\quad\quad  \overline{M}_\Gamma^\prime \to \overline{M}^\prime$$
endowed with a 
$\Gamma$-equivariant stratified diffeomorphism $$\overline{f}_\Gamma:
\overline{M}_\Gamma\to   \overline{M}_\Gamma^\prime\,.$$
Let $f_\Gamma: \widetilde{M}\to \widetilde{M}^\prime$ be the induced $\Gamma$-equivariant diffeomorphism
between the two regular strata.
We fix a $\Gamma$-equivariant edge structure $\tilde{g}^\prime$ on $\widetilde{M}^\prime$ and we consider
$f_\Gamma^*(\tilde{g}^\prime)$ on $\widetilde{M}$; we denote by
$ \widetilde{D}^{\tilde{g}^\prime}$ and  $\widetilde{D}^{f_\Gamma^*\tilde{g}^\prime}$
the two corresponding signature operators.
Then,
proceeding as above, one proves in addition to \eqref{equality-of-eta} the following equality:
\begin{equation}\label{equality-of-gammaeta}
\eta_{\Gamma} (\widetilde{D}^{f_\Gamma^*\tilde{g}^\prime})= \eta_\Gamma (\widetilde{D}^{\tilde{g}^\prime})
\end{equation}
whenever one of the two is well defined.
We thus conclude, by metric independence, that
\begin{equation}
\rho_{\Gamma} (\widetilde{D}_{\widetilde{M}})=\rho_{\Gamma} (\widetilde{D}_{\widetilde{M}^\prime})\,.
\end{equation}

\section{Open problems and future research directions}

In this final section we wish to highlight some open problems and future research 
directions which are strongly connected to the discussion presented here. We plan to tackle 
these issues in future projects.

\subsection{Residue of the eta-function at zero}
The residue of the eta-function at zero is a sum of a term coming from the interior and a term coming from the edge singularity.
The term coming from the interior is an integral of a local quantity over $M$. 
The term coming from the edge $B$ is an integral over $B$ of a term that is global in the fibres of
the edge fibration $\phi: \partial M \to B$ and local along $B$. 
It would be interesting to understand the structure of the interior and the edge contribution 
more explicitly. Another related open question is if the APS index theorem still holds (with some
additional correction terms), if the residue of the eta-function does not vanish.

\subsection{Heat kernel analysis for Cheeger boundary conditions}
Throughout our arguments we have posed the geometric Witt condition to ensure 
essential self-adjointness of the corresponding Dirac operators. In case the geometric 
Witt condition is not satisfied, one may pose Cheeger boundary conditions, which have 
been introduced by the first named author, jointly with Albin, Leichtnam and Mazzeo 
in \cite{Cheeger-spaces}. One would like to extend our discussion to the signature operator equipped 
with Cheeger boundary conditions; this  requires a microlocal construction of the corresponding
 heat kernel.

\subsection{Analytic torsion on singular Galois coverings}
There has been intensive research on $L^2$-invariants, in particular
analytic torsion on Galois coverings, cf. the incomplete list of references 
\cite{An1, An2, An3, An4}. Our analysis here lays the groundwork for 
defining $L^2$-analytic torsion on Galois coverings of simple incomplete 
edge spaces. 

\subsection{Signature formula on simple edge spaces}
The two Atiyah-Patodi-Singer index theorems established here may be applied to the special 
case of the signature operator in order to derive a signature formula on edge manifolds
with boundary and an $L^2$-signature formula on on $\Gamma$-edge manifolds with boundary
(satisfying, of course, the Witt condition).
As in the work of Atiyah-Patodi-Singer and in the work of Valliant 
\cite{Vaillant} and L\"uck-Schick \cite{Lueck-Schick},
there is an additional Hodge theoretic argument in order to pass from an index formula of APS type
to a true signature formula. This is ongoing work of the authors.

\def\cprime{$'$}
\providecommand{\bysame}{\leavevmode\hbox to3em{\hrulefill}\thinspace}
\providecommand{\MR}{\relax\ifhmode\unskip\space\fi MR }
\providecommand{\MRhref}[2]{%
  \href{http://www.ams.org/mathscinet-getitem?mr=#1}{#2}
}
\providecommand{\href}[2]{#2}

\end{document}